\documentclass[11pt,letterpaper]{amsart}

\usepackage{amsmath}               
\usepackage[
colorlinks=true,              
linkcolor=blue,                
citecolor=red,               
urlcolor=cyan                  
]{hyperref}

\numberwithin{equation}{section}
\theoremstyle{plain}
\newtheorem{theorem}{Theorem}[section]
\newtheorem{prop}[theorem]{Proposition}

\newtheorem{lemma}[theorem]{Lemma}
\newtheorem{conjecture}{Conjecture}

\newtheorem{corollary}[theorem]{Corollary}

\theoremstyle{definition}

\newtheorem{remark}[theorem]{Remark}

\newcommand{\Rmnum}[1]{\expandafter\@slowromancap\romannumeral #1@}

\newcommand{\mr}{\mathbb{R}}
\newcommand{\ud}{\mathrm{d}}
\newcommand{\ms}{\mathbb{S}}
\newcommand{\fint}{-\mkern -19mu\int}
\allowdisplaybreaks

\keywords{$Q$-curvature,  Isoperimetric inequality, Fiala-Huber}
\subjclass{Primary: 52B60,   Secondary: 53C18, 31B35}
\address{Mingxiang Li, Department  of Mathematics \& Institute of Mathematical Sciences, The Chinese University of Hong Kong, Shatin, NT, Hong Kong}
\email{mingxiangli@cuhk.edu.hk}

\address{X. Xu,   School  of Mathematics, Nanjing University, China }
\email{matxuxw@nju.edu.cn}

\begin{document}
	\title[Isoperimetric inequality about  $Q$-curvature]{A sharp isoperimetric inequality and the top order $Q$-curvature}
	\author{Mingxiang Li, Xingwang Xu}
	\date{}
	\maketitle
	\begin{abstract}
		For a smooth, complete  and  normal metric $g = e^{2u}|dx|^2$  with finite total $n$-th order $Q$-curvature on $\mathbb{R}^n$ with dimension $n \geq 2$, we first show that  everywhere non-negativity (resp. non‑positivity) $n$-th order $Q$-curvature $Q_g^{(n)}$ implies everywhere non-negativity (resp. non‑positivity) of the sectional curvature. Based on this fact,  we secondly show that, once $Q_g^{(n)}$ is non-negative, then for any compact domain $\Omega \subset \mathbb{R}^n$ with  smooth boundary $\partial\Omega$, the following sharp isoperimetric  inequality holds:
		\[
		|\partial\Omega|_g^{\frac{n}{n-1}} \geq n^{\frac{n}{n-1}} |\mathbb{B}^n|^{\frac{1}{n-1}} \left(1 - \frac{2}{(n-1)!\,|\mathbb{S}^n|} \int_{\mathbb{R}^n} Q_g^{(n)} \, d\mu_g\right) |\Omega|_g.
		\]
The third claim in this article is that, if the $n$-th order $Q$-curvature, $Q_g^{(n)}$, is non-positive and  under the main assumption that Cartan-Hardamard conjecture holds true, then we have the sharp inequality
			\[
		|\partial\Omega|_g^{\frac{n}{n-1}} \geq n^{\frac{n}{n-1}} |\mathbb{B}^n|^{\frac{1}{n-1}}|\Omega|_g.
		\]
			\end{abstract}
\section{Introduction}

The isoperimetric inequality is one of the most ancient topics in mathematics. It has been widely used in analysis, geometry, physics and many other fields.  A wonderful survey article \cite{Oss} by Osserman is recommended for further discussion.  Let us start the story  with dimension two.  For any compact domain $\Omega$ with  smooth boundary
$\partial \Omega$ in the two dimensional  Euclidean space, the classical and famous isoperimetric inequality can be written as   $$|\partial\Omega|^2\geq 4\pi|\Omega|$$  
where $|\cdot|$ denotes the measure of a set with respect to an appropriate measure element. When such an inequality is considered on a surface, the curvature of the metric naturally plays a role.   Based on the seminal work of Cohn-Vossen \cite{CV} for complete  open surfaces with Gaussian curvature $K_g\geq 0$,  Fiala \cite{Fiala} showed that, for any compact domain $\Omega$ in a simply connected and analytic open surface $(M^2,g)$, the following inequality holds
\begin{equation}\label{Fiala's inf}
	|\partial\Omega|_g^2\geq 4\pi\left(1-\frac{1}{2\pi}\int_{M^2}K_g\ud\mu_g\right)|\Omega|_g.
\end{equation}
It is worth to point out that $\int_{M^2}K_g\ud\mu_g\leq 2\pi$ is guaranteed by the  classical Cohn-Vossen inequality \cite{CV}, \cite{Finn} and \cite{Huber 57}. Later, Huber \cite{Huber 54} used potential theory for subharmonic functions to generalize this result by removing the analytic condition and assuming only that the negative part of the Gaussian curvature is integrable:
\begin{equation}\label{Fiala-Huber inequality}
		|\partial\Omega|_g^2\geq 4\pi\left(1-\frac{1}{2\pi}\int_{M^2}K^+_g\ud\mu_g\right)|\Omega|_g
\end{equation}
where $K_g^+$ denotes the positive part of Gaussian curvature $K_g$.  For Gaussian curvature  satisfying $K_g\leq K_0$ for some constant $K_0$, Bol \cite{Bol}  established the following isoperimetric inequality 
$$|\partial \Omega|_g^2\geq 4\pi |\Omega|_g-K_0|\Omega|_g^2$$
which plays an important role in the remarkable sphere covering inequality of Gui and Moradifam  \cite{Gui-Mora}.  Other isoperimetric comparison theorems  on rotational symmetric surfaces have been  established by Benjamini and Cao \cite{BC} and by Topping \cite{Topping} where the mean curvature flow is involved.

In  higher dimensional Euclidean space, the classical isoperimetric inequality remains valid: for any compact domain $\Omega\subset \mr^n$, we have
\begin{equation}\label{sharp iso}
	|\partial\Omega|^{\frac{n}{n-1}}\geq n^{\frac{n}{n-1}}|\mathbb{B}^n|^{\frac{1}{n-1}}|\Omega|
\end{equation}
where $|\mathbb{B}^n|$ denotes the volume of standard unit ball in $\mr^n$. However,  the corresponding inequality on general manifolds seems considerably difficult.  For example, on  $n$-dimensional  manifolds equipped with non-positive sectional curvature metric $g$,  the  \emph{Cartan-Hadamard conjecture} states that  the sharp isoperimetric inequality \eqref{sharp iso} holds. This conjecture has been verified  for $n=2$ (Weil \cite{Weil}, Beckenbach-Radó  \cite{BR}), $n=3$ (Kleiner \cite{Kleiner}), and $n=4$ (Croke \cite{Croke}).  For $n\geq 5$, it  remains open.  More recently development on this issue can be found in \cite{GS}.   On the other hand, under the assumption $Ric_g\geq (n-1)g$, the corresponding  iso-perimetric inequality is known as L\'evy-Gromov inequality, its recent development can be found in the work of Cavalletti-Mondino \cite{Cava-Modi}.

In recent years,  the isoperimetric inequality has been proved  for complete non‑compact manifolds with non‑negative Ricci curvature.  For three-dimensional case, Agostiniani, Fogagnolo, and  Mazzieri \cite{AFM}, combined  the Willmore-type inequality with mean curvature flow due to Huisken,  established a sharp isoperimetric inequality involving the asymptotic volume ratio $\mathrm{AVR}(g)$, defined by,  for an $n$-dimensional manifold,
$$\mathrm{AVR}(g):=\lim_{r\to\infty}\frac{|\{p\in M^n:d_g(p,q)<r\}|_g}{|\mathbb{B}^n|r^n}$$
for some fixed point $q\in M^n$.  The existence of this  limit is guaranteed  by Bishop-Gromov volume comparison theorem. The reader is referred to Schulze's work \cite{Schulze} for the  discussion on isoperimetric inequalities  and mean curvature flow.  More recently, the ABP method is applied by Brendle \cite{Brendle} to set up the  isoperimetric inequality on manifolds with non-negative Ricci curvature in  all dimensions: for any compact domain $\Omega$ with boundary $\partial \Omega$, we have
\begin{equation}\label{brendle's inequality}
	|\partial \Omega|_g^{\frac{n}{n-1}}\geq n^{\frac{n}{n-1}}|\mathbb{B}^n|^{\frac{1}{n-1}}(\mathrm{AVR}(g))^{\frac{1}{n-1}}|\Omega|_g.
\end{equation}
In \cite{BK}, Balogh and Krist\'aly proved the inequality \eqref{brendle's inequality} on $\mathrm{CD}(0,N)$ metric measure spaces via the  Brunn-Minkowski inequality and optimal mass transport theory. Using  this inequality together with  symmetrization techniques, they further established  a sharp $L^p$-Sobolev inequality (See Theorem 1.2 in \cite{BK}).

Now let us return to our concern,  in terms of conformal geometry,  $Q$-curvature is a natural higher‑dimensional analogue of Gaussian curvature and serves as a central concept in this field. Clearly the book written by Fefferman and Graham \cite{FG} is a good reference for $Q$-curvature.  In this respect, A. Chang once asked in \cite{Peter} whether there exists an isoperimetric inequality associated with $Q$-curvature or its integral. To explore the connection between isoperimetric inequalities and the integral of $Q$-curvature, we should begin with the remarkable work of Chang, Qing, and Yang \cite{CQY Duke}, which extended the classical Cohn–Vossen inequality \cite{CV} and Finn's identity \cite{Finn} to the four‑dimensional conformally flat manifold in terms of  $Q$-curvature. Before presenting their precise statement, we need to settle down some notations.

As promised, our aim is to establish a connection between the $n$-th order 
$Q$-curvature and the isoperimetric ratio, within the setting of smooth conformal metrics on $\mr^n$ with $n\geq 2$.
In this particular case, the conformal metric has a simple form: $g = e^{2u}  |dx|^2$ and the  $n$-th order $Q$-curvature can be simply defined by
  \begin{equation}\label{Q-def}
  	Q_g^{(n)}  =  e^{-nu}  (-\Delta)^{\frac{n}{2}}u.
  \end{equation}
  Observe that when $n$ is odd, the operator  $(-\Delta)^{\frac{n}{2}}$ is  a  non-local operator,  and to make it well define, one does requires extra conditions; a good reference  \cite{CS, Chang-Gonz} is recommended for the detailed treatment. Since our treatment is just mainly concerned with the integral representation (i.e, normal metric), we do assume this $n$-th order $Q$-curvature is well defined.

  Recall that a  conformal metric $g = e^{2u} |dx|^2$ on $\mr^n$ is said to be of the finite total $Q$-curvature if $Q_g^{(n)}e^{nu}\in L^1(\mr^n)$ and  a metric with finite total $Q$-curvature is called normal  if the conformal factor $u(x)$ satisfies the integral representation:
  \begin{equation}\label{normal solution}
  	u(x)=\frac{2}{(n-1)!|\mathbb{S}^n|}\int_{\mr^n}\log\frac{|y|}{|x-y|}Q_g^{(n)}(y)e^{nu(y)}\ud y+C
  \end{equation}
  where $|\mathbb{S}^n|$ denotes the volume of  standard n-sphere, and $C$ denotes a constant. 
  
Thanks to the combined efforts of Cohn-Vossen \cite{CV}, Huber \cite{Huber 57}, Chang-Qing-Yang \cite{CQY Duke}, Fang \cite{Fang}, and Ndiaye-Xiao \cite{NX}, it is now known that for a complete normal metric 
 $g=e^{2u}|dx|^2$ on $\mr^n$,  the following generalized Cohn-Vossen type inequality holds:
  \begin{equation}\label{CV for Q}
  	\int_{\mr^n}Q_g^{(n)}\ud\mu_g\leq \frac{(n-1)!|\mathbb{S}^n|}{2}
  \end{equation}
  where $\ud\mu_g:=e^{nu}\ud x$.  As Finn \cite{Finn}  figured out the deficit in Cohn-Vossen's inequality to get his identity, they \cite{CQY Duke, NX} also found the deficit in above inequality \eqref{CV for Q} which is the isoperimetric ratio near infinity so that they also have the identity:
  \begin{equation}\label{Finn for Q}
  	1-\frac{2}{(n-1)!|\mathbb{S}^n|}\int_{\mr^n}Q_g^{(n)}\ud\mu_g=\lim_{r\to\infty}\frac{|\partial B_r(0)|_g^{\frac{n}{n-1}}}{n^{\frac{n}{n-1}}|\mathbb{B}^n|^{\frac{1}{n-1}}|B_r(0)|_g}
  \end{equation}
where $B_r(0)$ denotes the Euclidean ball of radius $r$   in $\mr^n$  centered at the  origin. For brevity, we use $\alpha_0$ to denote the normalized total $Q$-curvature:
\begin{equation}\label{alpha_0 def}
	\alpha_0:=\frac{2}{(n-1)!|\mathbb{S}^n|}\int_{\mr^n}Q_g^{(n)}\ud\mu_g
\end{equation}
and will be used throughout this paper.

  The identity \eqref{Finn for Q} seems indicate that the isoperimetric inequality  is necessarily associated with the integral of $Q$-curvature, at least when the domain is very large. However, it is still mysterious whether such a relationship holds for all compact domains. For brevity, we set up some notations first: given a compact domain $\Omega\subset\mr^n$ with boundary $\partial \Omega$, we define the  isopermetric quotient as follows
  \begin{equation*}
  	I_g(\Omega):=\frac{|\partial \Omega|_g^{\frac{n}{n-1}}}{n^{\frac{n}{n-1}}|\mathbb{B}^n|^{\frac{1}{n-1}}|\Omega|_g},
  \end{equation*}
  and define the isoperimetric ratio over all such domains:
  \begin{equation}\label{I_g def}
  	I_g:=  \inf_{\Omega} I_g(\Omega).
  \end{equation}
As pointed out in \cite{CQY Duke}, without the normal metric assumption, \eqref{Finn for Q} fails in general and the integral of the $Q$-curvature can be arbitrary. This is why we work with normal metrics throughout this paper.

An immediate consequence of \eqref{Finn for Q} can be stated as follows:   for a complete  normal metric on $\mr^n$, there holds
$$ I_g\leq 1-\alpha_0.$$
The natural question is how to characterize the lower bound of $I_g$. Even for those  complete normal  metrics on $\mr^n$, the question is far away from the satisfactory answer. Maybe the first answer to it is done by Bonk, Heinonen, and Saksman \cite{BHS}, who showed that, if the  normal metric has sufficiently small total 
$Q$-curvature, then $I_g>0$.  Soon after, Xiao \cite{Xiao} also proved that $I_g > 0$, under different assumptions. Several years later, Wang \cite{Wang IMRN} showed that  $I_g>0$ for the normal metric under the optimal  assumption $\alpha_0<1$,  which can be thought of the higher dimensional generalization of the work of Li and Tam \cite{Li-Tam}.

 Up to now, the lower bounds of $I_g$ obtained so far are dependent of the metric $g$ itself and on the dimension $n$. It was Wang \cite{Wang Adv} who realized that the lower bound of $I_g$  actually depends on the integral of the positive and negative parts of the top order $Q$-curvature as well as the dimension $n$.  Since this is closely related to our work here, let us put some details here.
 First, we set up some notations: 
\[
\alpha_0^+:=\frac{2}{(n-1)!\,|\mathbb{S}^n|}\int_{\mathbb{R}^n} \bigl(Q^{(n)}_g\bigr)^+ \ud\mu_g , \;
\alpha_0^-:=\frac{2}{(n-1)!\,|\mathbb{S}^n|}\int_{\mathbb{R}^n} \bigl(Q^{(n)}_g\bigr)^- \ud\mu_g
\]
where $\varphi^+$ and $\varphi^-$ denote the positive and negative parts of the function $\varphi$.
 Wang's statement in \cite{Wang Adv} can be restated as: if one assumes $\alpha_0^+<1$ and $\alpha_0^-<+\infty$, then
\[
I_g \geq C(n,\alpha_0^+,\alpha_0^-)>0
\]
for some positive constant $C(n,\alpha_0^+,\alpha_0^-)$ depending only $n,\alpha_0^+$ and $\alpha_0^-$.

Recall that Fiala has been shown more than we just mentioned above  \eqref{Fiala's inf} which now we call the Fiala's identity which says, in two dimensioal case, $I_g = 1 - \alpha_0$.  Therefore  it is natural to ask ourself, is this always true for higher dimensional situation? The main purpose of this article is to show that indeed $ I_g = 1 - \alpha_0$  for normal metrics with the condition that $Q_g^{(n)} \geq 0$.

\begin{theorem}\label{thm:main theorem}
Let $(\mr^n, g)$ be a smooth complete manifold with conformal normal metric $g=e^{2u}|dx|^2$ and integer $n\geq 2$. Suppose that its $n$-th order $Q$-curvature is non-negative everywhere.  Then, the  isoperimetric ratio $I_g$ \eqref{I_g def} is given by $1 - \alpha_0$, that is,  $$I_g=1-\frac{2}{(n-1)!|\mathbb{S}^n|}\int_{\mr^n}Q_g^{(n)}\ud \mu_g.$$
\end{theorem}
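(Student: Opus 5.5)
The upper bound $I_g\le 1-\alpha_0$ is already a consequence of the Finn-type identity \eqref{Finn for Q}, since the Euclidean balls $B_r(0)$ are admissible domains. So the whole task is the lower bound $I_g(\Omega)\ge 1-\alpha_0$ for every compact smooth domain $\Omega$. Following the abstract, I will split this into two steps. First, I show that $Q_g^{(n)}\ge 0$ forces the sectional curvature of $g$ to be non-negative. Second, I feed this into Brendle's inequality \eqref{brendle's inequality}, after identifying $\mathrm{AVR}(g)$ with $(1-\alpha_0)^{n-1}$.

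\textbf{Step 1: sign of the curvature.} Put $f=Q_g^{(n)}e^{nu}\ge 0$ and $c_n=\frac{2}{(n-1)!|\mathbb{S}^n|}$. From \eqref{normal solution} we have
\[
u(x)=c_n\int\log\frac{|y|}{|x-y|}f(y)\,dy+C,
\]
so $u$ is a superposition of the functions $-\log|x-y|$ with non-negative weights. For a conformally flat metric $e^{2u}|dx|^2$, the sectional curvature of the plane spanned by orthonormal $e_i,e_j$ is
\[
K_{ij}=-e^{-2u}\bigl(u_{ii}+u_{jj}-u_i^2-u_j^2+|\nabla u|^2\bigr).
\]
I will show that $u$ is ``log-convex in the right sense''. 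Equivalently, $w=e^{-u}$ should satisfy $D^2 w\ge 0$ together with the conformally flat identity $K_{ij}\,e^{-2u}\propto w\,(w_{ii}+w_{jj})-|\nabla w|^2$. For a single kernel $u=-\log|x-y|$ we get $w=|x-y|$; then $w\,\Delta_{ij}w-|\nabla w|^2=0$, because a cone over a point is flat. For the superposition I will need a Cauchy–Schwarz / Hölder argument. Write $u_i=-c_n\int\frac{(x-y)_i}{|x-y|^2}f$ and compute $u_{ii}$. Because $-u_{ii}$ is built from $\frac{1-2\theta_i^2}{|x-y|^2}$, we get
\[
-(u_{ii}+u_{jj})+|\nabla u|^2-u_i^2-u_j^2 \;\ge\; c_n\int\frac{1-\theta_i^2-\theta_j^2}{|x-y|^2}\cdot\ldots
\]
minus a square. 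Here I expect to need the total mass bound $\alpha_0\le 1$ from \eqref{CV for Q}, together with Jensen's inequality with respect to the probability measure $f/\!\int f$. Concretely, the quadratic term $|\nabla u|^2$ has a coefficient $\alpha_0^2$ compared with the linear term coefficient $\alpha_0$, and $\alpha_0\le1$ combined with Cauchy–Schwarz should give the sign. This is the computational heart of the argument and the main obstacle: the signs of the mixed terms must be checked carefully, and the mass bound must enter in exactly the right way.

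\textbf{Step 2: conclusion.} Since $K\ge0$ gives $\mathrm{Ric}_g\ge 0$ and $g$ is complete, \eqref{brendle's inequality} yields
\[
I_g(\Omega)\ge \mathrm{AVR}(g)^{\frac1{n-1}}.
\]
It remains to show $\mathrm{AVR}(g)=(1-\alpha_0)^{n-1}$. For large $r$, the metric ball about a point is comparable to the Euclidean ball $B_\rho(0)$ with $g$-radius $\approx\int_0^\rho e^{u}$. The asymptotics of normal metrics used to prove \eqref{Finn for Q} (from \cite{CQY Duke, NX}) give $u(x)=-\alpha_0\log|x|+o(\log|x|)$ in an averaged sense. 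Together with Bishop–Gromov monotonicity, which guarantees that the limit exists, this gives
\[
\mathrm{AVR}(g)=\lim\frac{|B_\rho|_g}{|\mathbb{B}^n|\,r(\rho)^n}.
\]
This limit should equal $\lim I_g(B_\rho)^{n-1}=(1-\alpha_0)^{n-1}$ via \eqref{Finn for Q}, with the coarea formula and L'Hôpital linking $|\partial B_\rho|_g$, $|B_\rho|_g$ and the radius. If identifying the AVR directly proves delicate, an alternative is to run Brendle's ABP argument on $\Omega$ and use the fact that the set of optimal transport directions has $g$-asymptotic cone angle $(1-\alpha_0)$. I would try the coarea/L'Hôpital route first. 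Combining the two inequalities then gives $I_g=1-\alpha_0$.
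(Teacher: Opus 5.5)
Your architecture is the paper's: the upper bound from \eqref{Finn for Q}, non-negative sectional curvature from the integral representation, then Bishop--Gromov and Brendle's inequality \eqref{brendle's inequality}. Step 1, once carried out, is exactly the proof of Theorem \ref{thm:positive sec for alpha_0 leq2}, but fix the sign in your display. The correct expression is $e^{2u}K_{ij}=-(u_{ii}+u_{jj})+u_i^2+u_j^2-|\nabla u|^2=2\int\frac{\sum_{k\neq i,j}(x_k-y_k)^2}{|x-y|^4}\,\ud\nu-\sum_{k\neq i,j}u_k^2$. Cauchy--Schwarz against $\nu\ge 0$ gives $u_k^2\le\alpha_0\int\frac{(x_k-y_k)^2}{|x-y|^4}\,\ud\nu$, hence $e^{2u}K_{ij}\ge(2-\alpha_0)\int\frac{\sum_{k\neq i,j}(x_k-y_k)^2}{|x-y|^4}\,\ud\nu\ge 0$, so $\alpha_0\le 2$ already suffices. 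Your heuristic aside is also wrong, though unused: for $n\ge 3$ the single-kernel metric $|x-y|^{-2}|dx|^2$ is the round cylinder $\mr\times\ms^{n-1}$, and $w(w_{ii}+w_{jj})-|\nabla w|^2=1-\theta_i^2-\theta_j^2$, not $0$.

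The genuine gap is Step 2. You only need $\mathrm{AVR}(g)\ge(1-\alpha_0)^{n-1}$, since the reverse inequality follows from Brendle and \eqref{Finn for Q}, but your sketch does not prove it.
\begin{itemize}
\item Identity \eqref{Finn for Q} concerns particular test domains, the Euclidean balls. On its own it can only bound $I_g$, and hence via Brendle $\mathrm{AVR}(g)$, from above. A lower bound on AVR requires showing that geodesic balls are large.
\item Your coarea/L'H\^opital conversion of the isoperimetric ratio of $B_\rho(0)$ into volume growth is valid only when the spheres $\partial B_\rho(0)$ are geodesic spheres, i.e.\ for the radial metric $\bar g=e^{2\bar u}|dx|^2$. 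There it gives an alternative proof of Lemma \ref{lem: Vbarg}.
\item For non-radial $u$, $d_g(0,\cdot)$ is not constant on $\partial B_\rho(0)$, so your ``$g$-radius'' $r(\rho)$ is undefined.
\item $u-\bar u\to 0$ only in $L^1$ on spheres (Lemma \ref{lem:|w|}), not pointwise. So you cannot trap $B_\rho(0)$ between geodesic balls of asymptotically equal radii.
\item Averaged $o(\log|x|)$ asymptotics control $e^u$ only up to factors $|x|^{o(1)}$.
\item ``Comparable'' yields at best $C^{-1}r^n\le V_g(B_r^g(0))\le Cr^n$ (Lemma \ref{lem: V_barg and V_g growth}), not the sharp constant.
\item The ABP alternative is not an argument as stated.
\end{itemize}

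The missing idea is an averaged comparison between $g$ and $\bar g$ that exploits the one-sided bound $d_g(0,x)\le l(x):=\int_0^{|x|}e^{u(t\theta)}\,dt$. This bound gives $\{l<r\}\subset B^g_r(0)$, which is exactly the direction a volume lower bound needs. The paper proceeds as follows.
\begin{itemize}
\item It shows $\bar g$ is complete and normal with the same $\alpha_0$ (Lemma \ref{lem: bar g}).
\item It computes $V_{\bar g}(B_r^{\bar g}(0))/(|\mathbb{B}^n|r^n)\to(1-\alpha_0)^{n-1}$ using $r\bar u'(r)\to-\alpha_0$ (Lemmas \ref{lem:ru'} and \ref{lem: Vbarg}).
\item It proves $\int_{B_r^{\bar g}(0)}|l-\bar l|\,\ud\mu_{\bar g}=o\bigl(r\,V_{\bar g}(B_r^{\bar g}(0))\bigr)$ (Lemma \ref{lem: |l(x)-bar l(x)|}, via Lemmas \ref{lem:|w|} and \ref{CQY lemma}). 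By Chebyshev, most of $B_r^{\bar g}(0)$ then lies in $\{l<(1+\delta)r\}\subset B^g_{(1+\delta)r}(0)$.
\item It shows $V_g(B_r^g(0))/V_{\bar g}(B_r^g(0))\to 1$ (Lemma \ref{lem:V_g Omega_r vs V_ bar g}). This uses $\alpha_0<1$ and the strong $A_\infty$ volume bounds of Lemmas \ref{lem:strong A_infty} and \ref{lem: V_barg and V_g growth}; the case $\alpha_0=1$ is trivial.
\end{itemize}
Without some argument of this kind, your Step 2, and with it the lower bound $I_g\ge 1-\alpha_0$, remains unproved.
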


There are several motivations for us to consider above result: first of all, it comes to generalize the corresponding result in two  dimensional case; secondly we already set the  connection between the non-negativity of the top order $Q$-curvature and  non-negativity of Ricci curvature of the metric in our recent work joint with J. Wei \cite{LWX}. By rechecking the idea from there,  recently we realize that, in fact, more can be done. In particular,  for a smooth complete  normal metric,  the sign of the top order  $Q$-curvature actually  determines the sign of sectional curvature of $g$.  For a Riemannian manifold $(M^n,g)$, the lower bound of sectional curvature is defined by
\[
\operatorname{sec}_g(p) := \inf_{\substack{E_1,E_2\in T_pM \\ |E_1|=|E_2|=1,\; g(E_1,E_2)=0}} g(R(E_1,E_2)E_2,E_1),
\]
where $R$ denotes the Riemann curvature tensor of $g$. We state it in the following theorem.

\begin{theorem}\label{thm:positive sectional for normal metric}
Let $(\mr^n, g)$ be  a smooth complete manifold with conformal normal metric $g=e^{2u}|dx|^2$ and integer $n\geq2$.
	\begin{enumerate}
		\item If the top order $Q$-curvature $Q_g^{(n)}$ is non-negative everywhere on $\mr^n$, then its sectional curvature $sec_g$ must be  non-negative;
		\item If the top order  $Q$-curvature $Q_g^{(n)}$  is non-positive everywhere, then its sectional  curvature  $sec_g$ must be non-positive.
	\end{enumerate}
\end{theorem}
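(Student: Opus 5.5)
The plan is to compute the sectional curvature of $g=e^{2u}|dx|^2$ explicitly from the integral representation \eqref{normal solution} and read off its sign. For a conformally flat metric and a $g$-orthonormal pair $E_i=e^{-u}e_i$, where $e_1,e_2$ are Euclidean orthonormal vectors (by rotation, the first two coordinate directions), the standard conformal change formula should give
\[
K_g(E_1,E_2)=e^{-2u}\Bigl(-u_{11}-u_{22}+u_1^2+u_2^2-|\nabla u|^2\Bigr)
=e^{-2u}\Bigl(-u_{11}-u_{22}-|P\nabla u|^2\Bigr),
\]
with $P$ the Euclidean orthogonal projection onto $\{e_1,e_2\}^\perp$. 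As a check, for $n=2$ this reduces to $-e^{-2u}\Delta u$. I will verify the formula first, since the rest depends on it.

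Set $c_n=\frac{2}{(n-1)!|\mathbb{S}^n|}$ and $\ud\mu=Q_g^{(n)}e^{nu}\ud y$, and write $z=x-y$. Differentiating \eqref{normal solution} under the integral sign gives
\[
\nabla u(x)=-c_n\int\frac{z}{|z|^2}\ud\mu, \qquad
u_{ij}(x)=-c_n\int\Bigl(\frac{\delta_{ij}}{|z|^2}-\frac{2z_iz_j}{|z|^4}\Bigr)\ud\mu .
\]
Hence
\[
-u_{11}-u_{22}=2c_n\int\frac{|z|^2-z_1^2-z_2^2}{|z|^4}\ud\mu=2c_n\int\frac{|Pz|^2}{|z|^4}\ud\mu .
\]
The differentiation needs justification, and this is the main technical obstacle. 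Near $y=x$ the density $Q_g^{(n)}e^{nu}$ is smooth and bounded, and the second-derivative kernel is $O(|z|^{-2})$. This is locally integrable when $n\ge3$, and when $n=2$ the relevant combination $|Pz|^2/|z|^4$ vanishes identically. Near infinity the kernels decay and $\mu$ is finite. The singular part of the Hessian is handled in the usual way for log potentials, so a standard splitting of the integral into near and far parts should suffice.

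For case (2), where $Q_g^{(n)}\le0$, the measure $\mu$ is nonpositive. Then $-u_{11}-u_{22}\le0$, and the term $-|P\nabla u|^2$ is always $\le0$, so $K_g\le0$ immediately.

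For case (1), where $\mu\ge0$, the first term is nonnegative but $-|P\nabla u|^2$ has the wrong sign, so it must be absorbed. I will use Cauchy--Schwarz:
\[
|P\nabla u|^2\le c_n^2\Bigl(\int\frac{|Pz|}{|z|^2}\ud\mu\Bigr)^2\le c_n^2\,\mu(\mr^n)\int\frac{|Pz|^2}{|z|^4}\ud\mu=c_n\alpha_0\int\frac{|Pz|^2}{|z|^4}\ud\mu .
\]
Completeness together with the Cohn--Vossen type inequality \eqref{CV for Q} gives $\alpha_0\le1$, and therefore
\[
K_g\ge e^{-2u}c_n(2-\alpha_0)\int\frac{|Pz|^2}{|z|^4}\ud\mu\ge0 .
\]
This is exactly where completeness enters, through \eqref{CV for Q}.
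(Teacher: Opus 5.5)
Your proposal is correct and is essentially the paper's proof. It uses the same conformal sectional-curvature formula, the same Hessian representation giving $-u_{11}-u_{22}=2c_n\int |Pz|^2|z|^{-4}\,d\mu$, the immediate sign conclusion when $Q_g^{(n)}\le 0$, and the same Cauchy--Schwarz absorption yielding $e^{2u}K_g\ge c_n(2-\alpha_0)\int|Pz|^2|z|^{-4}\,d\mu$, with $\alpha_0\le 1$ from the Cohn--Vossen inequality. One small correction concerns $n=2$: there the Hessian of the logarithmic potential picks up a Dirac term, so your displayed formula would wrongly give $-u_{11}-u_{22}=0$; that case is trivial anyway since $K_g=-e^{-2u}\Delta u=Q_g^{(2)}$, which is how the paper disposes of it before restricting the computation to $n\ge 3$.
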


If we assume  the Cartan–Hadamard conjecture holds, we should have the following conclusion concerning isoperimetric ratio in non-positive  case:
\begin{theorem}\label{thm:Q_g leq0}
Consider  a smooth complete  normal metric $g=e^{2u}|dx|^2$  on $\mr^n$    with
dimension $n\geq 2$ and $Q^{(n)}_g\leq 0.$  Assume that  the Cartan–Hadamard conjecture is true for all dimension.   Then the  isoperimetric ratio 
$I_g=1.$
\end{theorem}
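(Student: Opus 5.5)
The plan is to prove $I_g\ge 1$ and $I_g\le 1$ separately.

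\emph{Lower bound.} Since $Q_g^{(n)}\le 0$, part (2) of Theorem \ref{thm:positive sectional for normal metric} shows that the sectional curvature of $g$ is non-positive. The manifold $(\mr^n,g)$ is complete by hypothesis and simply connected because it is diffeomorphic to $\mr^n$. Hence it is a Cartan--Hadamard manifold. The assumed Cartan--Hadamard conjecture then gives the sharp inequality \eqref{sharp iso} for every compact domain $\Omega$ with smooth boundary, with volumes and areas measured in $g$. This says exactly that $I_g(\Omega)\ge 1$ for every such $\Omega$, and taking the infimum over $\Omega$ gives $I_g\ge 1$.

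For $n=2,3,4$ this step is unconditional, by Weil, Kleiner and Croke. In dimension $2$ it reduces to the known Fiala--Huber picture.

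\emph{Upper bound.} Test the infimum on Euclidean balls $B_r(0)$. By the Finn-type identity \eqref{Finn for Q},
\[
\lim_{r\to\infty} I_g(B_r(0)) = 1-\alpha_0 .
\]
Since $Q_g^{(n)}\le 0$, we have $\alpha_0\le 0$, so this only yields $I_g\le 1-\alpha_0$, which may exceed $1$. A different family of test domains is therefore needed.

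We use small geodesic balls instead. At any point $p$ the metric is smooth, and the standard small-ball expansion gives
\[
|\partial B^g_\varepsilon(p)|_g = n|\mathbb{B}^n|\varepsilon^{n-1}\bigl(1+O(\varepsilon^2)\bigr),
\qquad
|B^g_\varepsilon(p)|_g = |\mathbb{B}^n|\varepsilon^{n}\bigl(1+O(\varepsilon^2)\bigr).
\]
Substituting these into the definition of $I_g(\Omega)$ gives $I_g(B^g_\varepsilon(p))\to 1$ as $\varepsilon\to 0$. For $\varepsilon$ small these balls are compact domains with smooth boundary, so they are admissible test domains, and therefore $I_g\le 1$.

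Combining the two bounds gives $I_g=1$. The only substantive input is the sign transfer from $Q_g^{(n)}\le 0$ to non-positive sectional curvature, which is Theorem \ref{thm:positive sectional for normal metric}(2) and is taken as given here. Once that is available, the rest is the conjecture together with a local computation, and the main thing to check is that the hypotheses of the Cartan--Hadamard setting are met, namely completeness and simple connectivity.
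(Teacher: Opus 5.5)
Your proposal is correct and follows the paper's proof. Theorem \ref{thm:positive sectional for normal metric}(2) together with the assumed Cartan--Hadamard conjecture gives $I_g\ge 1$, and a shrinking-ball test gives $I_g\le 1$. The only difference is cosmetic: the paper's Lemma \ref{lem:I_g leq 1} tests on small Euclidean balls $B_r(0)$ using only smoothness of $u$, while you use small geodesic balls, which works equally well (the Euclidean balls simply avoid having to check that the boundary is smooth).
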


This paper is organized as follows.  Section \ref{sec: noraml and behavior} is devoted to establishing some elementary estimates on the asymptotic behavior of conformal factor of normal metric,  most of them has  appeared in the previous works of the first author \cite{Li Adv, Li conformal} more or less. We include the proof for self-containment.  In Section \ref{sec:Q and sec}, we explore the closed relationship between the top $Q$-curvature and sectional curvature and prove Theorem \ref{thm:positive sectional for normal metric}. The purpose of Section \ref{sec:volume ratio} is to show that the volume ratio near infinity has a sharp lower bound related to the integral of the 
$n$-th order $Q$-curvature.  With the help of these estimates in previous several sections and Brendle's isoperimetric inequality \eqref{brendle's inequality}, we complete the proof of Theorems \ref{thm:main theorem}, \ref{thm:Q_g leq0}  in Section \ref{sec:proof of main theorem}.

\hspace{3em}

{\bf Acknowledgment.} The first author would like to   thank Professor Mijia Lai for helpful discussions and for inspiring him to consider Theorem \ref{thm:positive sectional for normal metric}.

\section{Normal metric and its asymptotic behavior}\label{sec: noraml and behavior}

First, we recall the equation \eqref{Q-def}. When $n$ is even, $Q^{(n)}_g$ can be defined through the local equation. 
When $n$ is odd, the operator is interpreted as 
\[
(-\Delta)^{\frac{n}{2}} = (-\Delta)^{\frac{1}{2}} \circ (-\Delta)^{\frac{n-1}{2}}.
\]
To define the fractional operator $(-\Delta)^{\frac{1}{2}}$, we introduce the space 
\[
L_{1/2}(\mathbb{R}^n) := \left\{ \varphi \in L^1_{\mathrm{loc}}(\mathbb{R}^n) \;\middle|\; \int_{\mathbb{R}^n} \frac{|\varphi(x)|}{1+|x|^{n+1}} \, dx < +\infty \right\}.
\]
Then, for some constant $C(n)$ depending  on $n$,
\[
(-\Delta)^{\frac{1}{2}} f(x) := C(n) \, \mathrm{P.V.} \int_{\mathbb{R}^n} \frac{f(x)-f(y)}{|x-y|^{n+1}} \, dy.
\]
More discussion about the fractional Laplacian can be found in \cite{CS} and \cite{Chang-Gonz}. 
Thus, for  odd $n$, in order to make sure the $n$-th order $Q$-curvature $Q^{(n)}_g$ of $g = e^{2u}|dx|^2$ to be well-defined, we need to assume that $(-\Delta)^{\frac{n-1}{2}} u \in L_{1/2}(\mathbb{R}^n)$.  
For all $n \ge 2$, the following identity holds:
\begin{equation}\label{Green function}
	(-\Delta)^{\frac{n}{2}} \left( \frac{2}{(n-1)!\,|\mathbb{S}^n|} \log\frac{1}{|x|} \right) = \delta_0(x),
\end{equation}
where $\delta_0(x)$ denotes the Dirac operator. From this,  the normal solution \eqref{normal solution}  means that $u$ satisfying the equation \eqref{Q-def} can be represented by  a integral equation via its Green's function.

Some of the estimates below have been established in \cite{Li Adv, Li conformal} by the first  author.   For the reader's convenience, we sketch the proofs.

Throughout this section, we focus the smooth normal metric $g = e^{2u}|dx|^2$ on $\mathbb{R}^n$ with dimension $n\geq 2$.

For brevity, we should use the short forms $\ud\nu(y)$  and $\ud|\nu|(y)$ to denote the  measures:
$$\ud\nu(y):=\frac{2}{(n-1)!|\mathbb{S}^n|}Q_g^{(n)}(y)e^{nu(y)}\ud y,$$
and
$$ \ud|\nu|(y):=\frac{2}{(n-1)!|\mathbb{S}^n|}|Q_g^{(n)}(y)|e^{nu(y)}\ud y.$$
For any measurable set $E\subset \mr^n$, we introduce the following notations
$$\fint_E\varphi\ud \mu:=\frac{1}{|E|}\int_E\varphi\ud \mu, \quad  \bar u(r):=\fint_{\partial B_r(0)}u\ud\sigma.$$
\begin{lemma} \label{lem:bar u}
For a smooth complete  normal metric $g=e^{2u}|dx|^2$ on $\mr^n$ with $n\geq 2$, the spherical average of $u$ has a nice behavior:
\[\bar u(r)=(-\alpha_0+o(1))\log r\]
where  $o(1)\to 0$ as $r\to\infty$.
\end{lemma}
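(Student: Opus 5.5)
We average the integral representation \eqref{normal solution} over the sphere $\partial B_r(0)$ and exchange the order of integration by Fubini; this is legitimate because $|\nu|$ is a finite measure and $\log|x-y|$ is locally integrable on spheres. This gives
\[
\bar u(r)=\int_{\mathbb{R}^n}\Big(\log|y|-\fint_{\partial B_r(0)}\log|x-y|\,\ud\sigma(x)\Big)\ud\nu(y)+C .
\]
The key input is the classical mean value formula for spherical averages of the logarithm, which we use in the form of an inequality valid in every dimension $n\ge 2$:
\[
\fint_{\partial B_r(0)}\log|x-y|\,\ud\sigma(x)=\log\max\{r,|y|\}+O(1)\quad\text{uniformly}.
\]
In dimension $2$ this is an exact identity. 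For $n\ge 3$ the function $\log|x|$ is subharmonic, not harmonic, so we do not have exact equality. Instead we use the elementary bounds
\[
\log\max\{r,|y|\}-C_n\le \fint\log|x-y|\le \log(r+|y|).
\]
The upper bound is trivial. For the lower bound, when $|y|\ge 2r$ we have $|x-y|\ge |y|/2$ pointwise. When $|y|\le 2r$ we rescale to $r=1$ and use that $\fint_{\partial B_1}\log|x-z|\,\ud\sigma(x)$ is bounded below uniformly for $|z|\le 2$, since the logarithmic singularity is integrable on the sphere. Hence
\[
\bar u(r)=\int_{\mathbb{R}^n}\log\frac{|y|}{\max\{r,|y|\}}\,\ud\nu(y)+O(1)\cdot|\nu|(\mathbb{R}^n)+C .
\]

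Next we split the remaining integral over $B_r(0)$ and its complement. Outside $B_r(0)$ the integrand vanishes. Inside $B_r(0)$ it equals $\log|y|-\log r$, so
\[
\bar u(r)=-\nu(B_r(0))\log r+\int_{B_r(0)}\log|y|\,\ud\nu(y)+O(1).
\]
Since $\nu(B_r(0))\to\alpha_0$, the first term is $(-\alpha_0+o(1))\log r$. It remains to show that
\[
\int_{B_r(0)}\log|y|\,\ud\nu=o(\log r).
\]
For this, fix $R$ large and split into $B_1$, $B_R\setminus B_1$ and $B_r\setminus B_R$. On $B_1$ the integral is finite: $u$ is smooth, so $Q_ge^{nu}$ is bounded there and $\log|y|$ is integrable. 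On $B_R\setminus B_1$ the integral is bounded by $|\nu|(\mathbb{R}^n)\log R$. On $B_r\setminus B_R$ it is at most $\log r\cdot|\nu|(\mathbb{R}^n\setminus B_R)$. Dividing by $\log r$, letting $r\to\infty$ and then $R\to\infty$ gives $o(1)$.

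The main obstacle is the uniform spherical-average estimate for $\log|x-y|$ in dimensions $n\ge 3$. There the exact harmonic-mean identity fails, and we must control the near-diagonal region $|y|\approx r$ uniformly. The plan is to handle it through the scaling argument above, reducing to a compactness bound on $\fint_{\partial B_1}\log|x-z|$ for $|z|\le 2$. Everything else is a routine dominated-convergence splitting. Completeness of the metric is not needed for this lemma; only finiteness of $|\nu|$ is used.
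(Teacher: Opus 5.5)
Your proof is correct, and it takes a different route from the paper. The paper works pointwise. For fixed $x$ with $|x|$ large it splits $\mr^n$ into $B_1(x)$, $B_{\log|x|}(0)$ and the remainder, and derives the expansion $u(x)=(-\alpha_0+o(1))\log|x|+\int_{B_1(x)}\log\frac{1}{|x-y|}\ud\nu(y)$. Only afterwards does it average the leftover local singular term over $\partial B_r(0)$ via Fubini.

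You average first and reduce everything to a uniform estimate for $F_r(y):=\frac{1}{|\partial B_r(0)|}\int_{\partial B_r(0)}\log|x-y|\,\ud\sigma(x)$. Up to an error bounded by a constant times $|\nu|(\mr^n)$, this turns $\bar u(r)$ into the explicit radial expression $-\nu(B_r(0))\log r+\int_{B_r(0)}\log|y|\,\ud\nu(y)$, whose asymptotics are elementary. Your argument is shorter, avoids the three-region decomposition and the treatment of the near-diagonal term, and makes the error structure transparent. The paper's route instead yields the pointwise expansion of $u$ as a by-product.

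Two small remarks.
\begin{enumerate}
\item Your lower bound on $F_r$ can be made sharp without compactness. Since $x\mapsto\log|x-y|$ is subharmonic for every $n\geq 2$, the sub-mean value property at the centre gives $F_r(y)\geq\log|y|$. Moreover $y\mapsto F_r(y)$ is radial and subharmonic, hence nondecreasing in $|y|$, so $F_r(y)\geq F_r(0)=\log r$. Together with $F_r(y)\leq\log(r+|y|)$ this gives $\log\max\{r,|y|\}\leq F_r(y)\leq\log\max\{r,|y|\}+\log 2$ in all dimensions.
\item Your Fubini justification should rest on the combined kernel $\log\frac{|y|}{|x-y|}$, which is bounded for $|y|\geq 2r$, rather than on finiteness of $|\nu|$ alone. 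The separate integral $\int\log|y|\,\ud|\nu|$ can diverge at infinity even though $|\nu|(\mr^n)<\infty$. Near the origin you need $\int_{B_1(0)}|\log|y||\,\ud|\nu|<\infty$, which you correctly obtain from local boundedness of $Q_g^{(n)}e^{nu}$.
\end{enumerate}
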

\begin{proof}

Fix a point $x$ with $|x|\geq e^4$. Elementary calculation shows that  $|x|\geq 2\log|x|$.  Let us  split $\mr^n$ into three pieces
	$$A_1=B_{1}(x), \quad A_2=B_{\log|x|}(0),\quad A_3=\mr^n\backslash (A_1\cup A_2).$$
	
	We deal with $A_2$ first. For $y\in A_2$ and $|y|\geq2$ , it is rather easy to see that $|\log\frac{|x|\cdot|y|}{|x-y|}|\leq \log(2\log|x|)$.
	On the other hand,  for $|y|\leq 2$ and $y$ must be in $A_2$, we also have $ |\log\frac{|x|\cdot|y|}{|x-y|}|\leq |\log|y||+C$. Observe that the normal metric  assumption  implies $\int_{A_2} \ud|\nu|(y) \leq C $ for some constant $C > 0$. Thus we can easily conclude:
	
	\begin{align*}
			&\left|\int_{A_2}\log\frac{|y|}{|x-y|}\ud\nu(y)+\log|x|\int_{A_2}\ud\nu(y)\right|\\
			\leq &C\log\log|x|+C=o(1)\log|x|
	\end{align*}
where $o(1)\to 0$ 	as $|x|\to \infty$.
	
	Now we turn our attention to the set $A_3$. For $y\in A_3$,  a simple observation indicates that 
	$$\frac{1}{|x|+1}\leq\frac{|y|}{|x-y|}\leq |x|+1.$$ 
	With the help of this estimate, the integral over $A_3$ can be dominated as
	\begin{equation*}
		|\int_{A_3}\log\frac{|y|}{|x-y|}\ud\nu(y)|\leq \log(|x|+1)\int_{A_3}\ud|\nu|(y).
	\end{equation*}
	
	Finally we treat the integral over $A_1$:  by definition of $A_1$ , it is clear that $y\in B_1(x)$ implies that 
	$1\leq |y|\leq |x|+1$. Thus we can easily to control this term:
	
	$$|\int_{A_1}\log|y|\ud\nu(y)|\leq \log(|x|+1)\int_{A_1}\ud|\nu|(y).$$
		Due to $Q_g^{(n)}e^{nu}\in L^1(\mr^n)$, one has  $\int_{A_3\cup A_1}\ud|\nu|(y)\to 0$ as $|x|\to \infty$. Then,  
	$$\frac{2}{(n-1)!|\mathbb{S}^n|}\int_{A_2}Q_g^{(n)}(y)e^{nu(y)}\ud y=\alpha_0+o(1).$$
Using  these estimates, one obtains that 
	\begin{equation}\label{equ: u loc}
		u(x)=(-\alpha_0+o(1))\log|x|+\int_{B_1(x)}\log\frac{1}{|x-y|}\ud\nu(y).
	\end{equation}
For $r = |x| \gg1$, a direct computation yields that
	\begin{align*}
		&\left|\int_{\partial B_r(0)}\int_{B_1(x)}\left(\log \frac{1}{|x-y|}\right)\ud\nu(y)\ud\sigma(x)\right|\\
		\leq &\int_{\partial B_r(0)}\int_{B_{r+1}(0)\backslash B_{r-1}(0)}\left|\log |x-y|\right|\cdot \ud|\nu|(y)\ud\sigma(x)\\
		\leq &\int_{B_{r+1}(0)\backslash B_{r-1}(0)}\int_{\partial B_r(0)}\left|\log|x-y|\right|\ud\sigma(x)\ud |\nu|(y)\\
		\leq&\int_{B_{r+1}(0)\backslash B_{r-1}(0)}\int_{\partial B_r(0)\cap B_1(y)}\left|\log|x-y|\right|\ud\sigma(x)\ud|\nu|(y)\\
		&+\int_{B_{r+1}(0)\backslash B_{r-1}(0)}\int_{\partial B_r(0)\cap (\mr^n\backslash B_1(y))}\left|\log|x-y|\right|\ud\sigma(x)\ud |\nu|(y)\\
		\leq &\int_{B_{r+1}(0)\backslash B_{r-1}(0)}\left(C+Cr^{n-1}\log(2r+1)\right)\ud |\nu|(y),
	\end{align*}
	where $C$ is a positive constant. Note that from now on, we will use $C$ to denote a constant which may be different from line to line.
	
	Due to our assumption that $Q_g^{(n)}e^{nu}\in L^1(\mr^n)$, we can easily conclude:
	$$\fint_{\partial B_r(0)}\int_{B_1(x)}\left(\log \frac{1}{|x-y|}\right)\ud\nu(y)\ud\sigma(x)=o(1)\log r.$$
	Thus our claim follows by combining this with \eqref{equ: u loc}.  The proof of Lemma \ref{lem:bar u} is complete.
\end{proof}

The following calculus lemma will be useful later.

\begin{lemma}\label{lem:f average}
	Let $n$ be a integer $n\geq 3$ and $t$ be a real  number satisfying  $0<t\leq n-2$. For any $f\in L^1(\mr^n)\cap L_{loc}^\infty(\mr^n)$, there holds
	$$\lim_{r\to\infty}\fint_{\partial B_r(0)}\int_{\mr^n}\frac{|y|^t}{|x-y|^t}f(y)\ud y\ud\sigma(x)=0$$
	and
	$$\lim_{r\to\infty}\fint_{\partial B_r(0)}\int_{\mr^n}\frac{|x|^t}{|x-y|^t}f(y)\ud y\ud\sigma(x)=\int_{\mr^n}f(x)\ud x.$$
\end{lemma}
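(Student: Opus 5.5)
Both limits will follow from Fubini together with an explicit computation of the spherical average of the kernel $|x-y|^{-t}$ in $x$. For fixed $y$ set
\[
\Phi_r(y):=\fint_{\partial B_r(0)}\frac{\ud\sigma(x)}{|x-y|^t}.
\]
Since $0<t\le n-2$, the function $z\mapsto |z|^{-t}$ is superharmonic on $\mr^n\setminus\{0\}$: indeed $\Delta |z|^{-t}=t(t+2-n)|z|^{-t-2}\le 0$. It is also locally integrable on spheres, because $t<n-1$. By the mean value inequality for superharmonic functions, or simply by the Newton-type comparison, we get
\[
\Phi_r(y)\le \min\{r^{-t},|y|^{-t}\}.
\]
This is the key bound, and the restriction $t\le n-2$ is used exactly here. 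When $t=n-2$ it is an equality: $\Phi_r(y)=\max\{r,|y|\}^{-t}$.

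For the first limit, Fubini (Tonelli applied to $|f|$) gives
\[
\fint_{\partial B_r(0)}\int_{\mr^n}\frac{|y|^t}{|x-y|^t}f(y)\ud y\ud\sigma(x)=\int_{\mr^n}|y|^t\Phi_r(y)f(y)\ud y.
\]
The integrand is bounded by $|y|^t\min\{r^{-t},|y|^{-t}\}|f(y)|\le |f(y)|\in L^1$, and it tends to $0$ pointwise as $r\to\infty$, since for fixed $y$ it is at most $(|y|/r)^t|f(y)|$. Dominated convergence then gives the first claim.

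For the second limit the same computation yields
\[
\int_{\mr^n} r^t\Phi_r(y)f(y)\ud y .
\]
Here $r^t\Phi_r(y)\le 1$, which gives domination by $|f|$. It remains to show that $r^t\Phi_r(y)\to 1$ for each fixed $y$. Writing $x=r\theta$ with $\theta\in\ms^{n-1}$, we have
\[
r^t\Phi_r(y)=\fint_{\ms^{n-1}}|\theta-y/r|^{-t}\ud\sigma(\theta).
\]
As $y/r\to 0$, the integrand converges uniformly to $1$ once $|y|/r<1/2$. Dominated convergence again gives $\int_{\mr^n} f$.

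The main obstacle is justifying the uniform bound $\Phi_r(y)\le\min\{r^{-t},|y|^{-t}\}$ rigorously. My plan is to note that $h(y):=\Phi_r(y)$ is radial in $y$ and superharmonic in $y$ away from $\partial B_r$. Inside $B_r$ its value at $0$ is $r^{-t}$, and by the minimum principle applied to the radial superharmonic function it is nonincreasing in $|y|$ up to... In fact it is easier to argue directly. By the superharmonic mean value property, the spherical mean of $|z-y|^{-t}$ over $\partial B_r(0)$ is at most its value at the center, $|y|^{-t}$. By symmetry, since $|x-y|$ depends only on $|x|,|y|$ and the angle, the function $\Phi_r(y)$ equals the spherical mean over $\partial B_{|y|}$ of $|\cdot-re_1|^{-t}$, appropriately rescaled; this gives the bound $r^{-t}$ as well. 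The $L^\infty_{loc}$ assumption is only needed so that all the integrals are finite, and Fubini is valid because of the $L^1$ domination established above.
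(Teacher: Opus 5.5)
Your argument is correct and is essentially the paper's: Fubini, the kernel bound $\Phi_r(y)\le\min\{r^{-t},|y|^{-t}\}$, and dominated convergence, which you apply once on all of $\mr^n$, so the paper's $\epsilon$-tail splitting and its $f^{\pm}$ decomposition become unnecessary. The only real difference is how you get the kernel bound: you use superharmonicity of $|z|^{-t}$ plus the swap $\Phi_r(y)=\fint_{\partial B_{|y|}(0)}|z-re_1|^{-t}\ud\sigma(z)$, whereas the paper takes the exact identity for $t=n-2$ and applies H\"older; these are the same fact, since $|z|^{-t}=(|z|^{2-n})^{t/(n-2)}$ is a concave nondecreasing function of a superharmonic function (to keep your version clean, use the direct mean value inequality only for $|y|>r$ and the swapped one only for $|y|<r$, so the pole always lies outside the ball and you never need superharmonicity across the singularity, which ``superharmonic on $\mr^n\setminus\{0\}$'' does not cover).
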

\begin{proof}
	Since $f\in L^1(\mr^n)$, for any $\epsilon>0$, there exists $R_\epsilon>0$ such that 
	\begin{equation}\label{small total f}
		\int_{\mr^n\setminus B_{R_\epsilon}(0)}|f|\ud x<\epsilon.
	\end{equation}
	
	With the help of Fubini's theorem, one has
	\begin{align*}
		&\left|\fint_{\partial B_r(0)}\int_{\mr^n}\frac{|y|^t}{|x-y|^t}f(y)\ud y\ud\sigma(x)\right|\\
		\leq &\int_{B_{R_\epsilon}(0)}|f(y)|\fint_{\partial B_r(0)}\frac{|y|^t}{|x-y|^t}\ud\sigma(x)\ud y\\
		&+\int_{\mr^n\setminus B_{R_\epsilon}(0)}|f(y)|\fint_{\partial B_r(0)}\frac{|y|^t}{|x-y|^t}\ud\sigma(x)\ud y\\
		=:&I_1(x)+I_2(x).
	\end{align*}
	
	By Lebesgue's dominated convergence theorem,  one can easily conclude that
	$$I_1(x)\to 0, \quad \mathrm{as}\; r\to\infty.$$
	
	First observe that, for $n\geq 3$, with the help of mean value property of harmonic functions, we obtain the formula
	\begin{equation}\label{LL identity}
		\fint_{\partial B_r(0)}\frac{1}{|x-y|^{n-2}}\ud\sigma(x)=\min\{r^{2-n},|y|^{2-n}\}.
	\end{equation}
	
	The identity \eqref{LL identity} and H\"older's inequality together imply that
	\begin{equation}\label{LL inequality}
		\fint_{\partial B_r(0)}\frac{|y|^t}{|x-y|^t}\ud\sigma(x)\leq \left(\fint_{\partial B_r(0)}\frac{|y|^{n-2}}{|x-y|^{n-2}}\ud\sigma(x)\right)^{\frac{t}{n-2}}\leq 1
	\end{equation}
	where the assumption $0<t\leq n-2$ is used.
	Combine this with the estimate \eqref{small total f} to get:
	$$I_2(x)<\epsilon.$$
	Due to the arbitrary choice of $\epsilon$, we have
	$$\lim_{r\to\infty}\fint_{\partial B_r(0)}\int_{\mr^n}\frac{|y|^t}{|x-y|^t}f(y)\ud y\ud\sigma(x)=0.$$
	This finishes the proof of the first part.
	
	To see second part, we first decompose $f$ into two parts: $f = f^+ - f^-$ where $f^+(y) = \max\{ 0, f(y)\}$ and $f^-(y) = \max\{ 0, - f(y)\}$. By similar reason as in the first part, there holds
	\begin{align*}
		&\fint_{\partial B_r(0)}\int_{\mr^n}\frac{|x|^t}{|x-y|^t}f^+(y)\ud y\ud\sigma(x)\\
		=&\int_{B_{R_\epsilon}(0)}f^+(y)\fint_{\partial B_r(0)}\frac{|x|^t}{|x-y|^t}\ud\sigma(x)\ud y\\
		&+\int_{\mr^n\setminus B_{R_\epsilon}(0)} f^+(y) \int_{\partial B_r(0)}\frac{|x|^t}{|x-y|^t}\ud\sigma(x)\ud y\\
		=:&I_3(x)+I_4(x).
	\end{align*}
	Again, by Lebesgue's dominated convergence theorem,  one gets:
	$$I_3(x)\to \int_{B_{R_\epsilon}(0)} f^+ \ud y$$
	as $r\to\infty.$
	
	Thus apply the estimate \eqref{LL inequality} and H\"older's inequality to obtain:
	$$\fint_{\partial B_r(0)}\frac{|x|^t}{|x-y|^t}\ud\sigma(x)\leq 1$$
	which yields that
	$$I_4(x)<\epsilon.$$
	Due to the arbitrary choice of $\epsilon$, one has
	$$\lim_{r\to\infty}\fint_{\partial B_r(0)}\int_{\mr^n}\frac{|x|^t}{|x-y|^t}f^+(y)\ud y\ud\sigma(x)=\int_{\mr^n}f^+(x)\ud x.$$
	Similarly, one has
	$$\lim_{r\to\infty}\fint_{\partial B_r(0)}\int_{\mr^n}\frac{|x|^t}{|x-y|^t}f^-(y)\ud y\ud\sigma(x)=\int_{\mr^n}f^-(x)\ud x.$$
	Hence by subtracting these two formulas, we obtain the desired estimate
	$$\lim_{r\to\infty}\fint_{\partial B_r(0)}\int_{\mr^n}\frac{|x|^t}{|x-y|^t}f(y)\ud y\ud\sigma(x)=\int_{\mr^n}f(x)\ud x.$$
	The proof is finished.
\end{proof}

\begin{lemma}\label{lem:ru'}
	For a smooth normal metric on $\mr^n$ with dimension $n \geq 2$, the spherical average of the conformal factor $u(x)$ has the first order asymptotic estimate:
	\[\lim_{r\to\infty} r\cdot \bar u'(r)=-\alpha_0\]
\end{lemma}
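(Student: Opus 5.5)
Differentiating the normal representation \eqref{normal solution} under the integral gives
\[
\langle x,\nabla u(x)\rangle=-\int_{\mr^n}\frac{\langle x,x-y\rangle}{|x-y|^2}\ud\nu(y).
\]
Averaging over the sphere turns this into
\[
r\bar u'(r)=\fint_{\partial B_r(0)}\langle x,\nabla u\rangle \ud\sigma(x)=-\int_{\mr^n}\fint_{\partial B_r(0)}\frac{\langle x,x-y\rangle}{|x-y|^2}\ud\sigma(x)\ud\nu(y).
\]
The swap of integrals is justified by Fubini, since the kernel $|x|/|x-y|$ is locally integrable in $x$ on the sphere for $n\geq 2$. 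Indeed, $\int_{\partial B_r}|x-y|^{-1}\ud\sigma$ is finite because the sphere is $(n-1)$-dimensional and $1<n-1$. For $n=2$ one instead checks it directly, using the principal value or the harmonic-function identity below.

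The key step is to compute the inner spherical average exactly. For fixed $y$, define
\[
K_r(y):=\fint_{\partial B_r(0)}\frac{\langle x,x-y\rangle}{|x-y|^2}\ud\sigma(x)=\fint_{\partial B_r(0)} x\cdot\nabla_x\log|x-y|\ud\sigma(x)=r\frac{d}{dr}\fint_{\partial B_r(0)}\log|x-y|\ud\sigma(x).
\]
The spherical mean of the fundamental-type function $\log|x-y|$ is explicit only in $n=2$, where it equals $\log\max\{r,|y|\}$. In general dimensions $\log|x-y|$ is not harmonic, so I would avoid exact formulas. Instead I would show two facts: $|K_r(y)|\le C$ uniformly in $r$ and $y$, and $K_r(y)\to 1$ as $r\to\infty$ for each fixed $y$.

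The pointwise limit is clear, since the integrand tends to $1$ uniformly on the sphere when $|y|$ is fixed. For the uniform bound, write $\langle x,x-y\rangle/|x-y|^2=1+\langle y,x-y\rangle/|x-y|^2$. The second term is bounded by $|y|/|x-y|$, and Hölder together with \eqref{LL inequality} with $t=1$ gives $\fint_{\partial B_r(0)}|y|/|x-y|\ud\sigma\le 1$ when $n\ge3$. For $n=2$ a direct computation, or the explicit formula $K_r(y)=\mathbf{1}_{\{|y|<r\}}$ obtained from $\log\max\{r,|y|\}$, gives the bound.

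Then I would follow the splitting in the proof of Lemma \ref{lem:f average}. Fix $\epsilon>0$ and choose $R_\epsilon$ with $|\nu|(\mr^n\setminus B_{R_\epsilon})<\epsilon$. On $B_{R_\epsilon}$, dominated convergence gives $\int_{B_{R_\epsilon}}K_r\ud\nu\to\nu(B_{R_\epsilon})$. The tail contributes at most $C\epsilon$. Letting $\epsilon\to0$ yields $r\bar u'(r)\to-\nu(\mr^n)=-\alpha_0$.

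The main obstacle is justifying differentiation under the integral and the uniform bound on $K_r$ near $y\in\partial B_r$, where the kernel is singular. I would handle the former by differentiating the spherical mean $\bar u(r)$ directly in integrated form, since $\bar u(r)=\int\fint\log\frac{|y|}{|x-y|}\ud\sigma\ud\nu+C$ has a well-behaved $r$-derivative. The uniform bound is handled as above via \eqref{LL inequality}.
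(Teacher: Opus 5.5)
Your proof is correct and takes essentially the paper's route. For $n\ge 3$ the paper also writes $\frac{x\cdot(x-y)}{|x-y|^2}=1+\frac{y\cdot(x-y)}{|x-y|^2}$ and shows that the spherical average of the second term vanishes via \eqref{LL inequality} with $t=1$; this is packaged as Lemma \ref{lem:f average}, whose proof is exactly your $R_\epsilon$-splitting. For $n=2$ the paper uses the divergence theorem to get $r\bar u'(r)=\frac{1}{2\pi}\int_{B_r(0)}\Delta u\,dx=-\nu(B_r(0))$, which is the same identity your formula $K_r(y)=\mathbf{1}_{\{|y|<r\}}$ produces.
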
	
\begin{proof}
	A direct computation yields that 
	\begin{equation}\label{r_bar u'r}
		r\cdot \bar u'(r) =	\fint_{\partial B_r(0)}r\cdot \frac{\partial u}{\partial r}\ud \sigma=\fint_{\partial B_r(0)}x\cdot \nabla u\ud \sigma.
	\end{equation}
	For $n=2$, apply the divergence theorem to obtain that 
	$$r\cdot \bar u'(r)=\frac{1}{2\pi}\int_{\partial B_r(0)}\frac{\partial u}{\partial r}\ud\sigma=\frac{1}{2\pi}\int_{B_r(0)}\Delta u\ud x$$
	which yields that
	$$r\cdot \bar u'(r)\to -\alpha_0, \quad \mathrm{as}\; r\to\infty.$$

	For $ n \geq 3$, using the integral representation of the conformal factor $u$, namely the equation \eqref{normal solution}, a direct computation yields that 
	\begin{equation}\label{x nabla u}
		x\cdot \nabla u(x)
		=-\int_{\mr^n}\frac{x\cdot(x-y)}{|x-y|^2}\ud\nu(y)
		=-\alpha_0+\int_{\mr^n}\frac{y\cdot(y-x)}{|x-y|^2}\ud \nu(y).
	\end{equation}
	Apply Lemma \ref{lem:f average} for $n\geq 3$ to conclude that
	\begin{equation*}
		\left|\fint_{\partial B_r(0)}\int_{\mr^n}\frac{y\cdot(y-x)}{|x-y|^2}\ud\nu(y)\ud\sigma(x)\right|\leq \fint_{\partial B_r(0)}\int_{\mr^n}\frac{|y|}{|x-y|}\ud|\nu|(y)\ud\sigma(x)\to 0
	\end{equation*}
	as $r\to\infty.$
	Thus, combining the above estimate with identities \eqref{r_bar u'r} and \eqref{x nabla u}, we finish the proof.
\end{proof}

\begin{lemma}\label{lem:|w|}
For a smooth complete normal metric $g=e^{2u}|dx|^2$ on $\mr^n$ with dimension $n \geq 2$, the conformal factor $u$ is almost radial in the $L^1$ sense near infinity, that is,
	$$\fint_{\partial B_r(0)}|u-\bar u|\ud\sigma\to 0, \quad \mathrm{as}\; r\to\infty.$$
\end{lemma}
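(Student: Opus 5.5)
We use the representation \eqref{equ: u loc} established inside the proof of Lemma \ref{lem:bar u}. It gives, for $|x|$ large,
\[
u(x)=(-\alpha_0+o(1))\log|x|+\int_{B_1(x)}\log\frac{1}{|x-y|}\ud\nu(y),
\]
where the $o(1)$ term is uniform in the direction of $x$: it depends only on $|x|$, through $\int_{A_1\cup A_3}\ud|\nu|$ and $\log\log|x|/\log|x|$. This uniform $o(1)\log r$ error is not sharp enough, because we need $|u-\bar u|\to 0$ in average, not merely $o(\log r)$. So we redo the splitting more carefully.

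Write $u(x)-C=\int_{\mr^n}\log\frac{|y|}{|x-y|}\ud\nu(y)$. For $|x|=r$, split $\mr^n$ into $D_1=B_{r/2}(0)$ and $D_2=\mr^n\setminus B_{r/2}(0)$.

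\textbf{Estimate on $D_1$.} For $y\in D_1$ write
\[
\log\frac{|y|}{|x-y|}=\log\frac{|y|}{r}+\log\frac{r}{|x-y|}.
\]
The first term is independent of the direction of $x$, so it cancels exactly in $u-\bar u$. For the second term, $\left|\log\frac{r}{|x-y|}\right|\le C\frac{|y|}{r}$ when $|y|\le r/2$. Its contribution to $\fint_{\partial B_r}|u-\bar u|\ud\sigma$ is therefore at most
\[
2C\int_{B_{r/2}}\frac{|y|}{r}\ud|\nu|(y).
\]
This tends to $0$ by dominated convergence, since $|y|/r\le 1/2$ and the integrand tends to $0$ pointwise.

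\textbf{Estimate on $D_2$.} Here the total mass $\epsilon_r:=|\nu|(D_2)\to 0$. Split the kernel as
\[
\log\frac{|y|}{|x-y|}=\log\frac{|y|}{r}+\log\frac{r}{|x-y|}.
\]
The first term is again independent of the direction of $x$ and cancels in $u-\bar u$; this is what avoids the large $\log|y|$. It remains to bound
\[
\fint_{\partial B_r}\int_{D_2}\left|\log\frac{r}{|x-y|}\right|\ud|\nu|(y)\ud\sigma(x).
\]
By Fubini, this is controlled by $\sup_{y\in D_2}\fint_{\partial B_r}\left|\log\frac{r}{|x-y|}\right|\ud\sigma(x)$ times $\epsilon_r$. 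The key step is a uniform bound on this supremum. By scaling $x=r\xi$, $y=r\eta$ with $|\eta|\ge 1/2$, it becomes $\fint_{\ms^{n-1}}|\log|\xi-\eta||\ud\sigma(\xi)$. This is bounded uniformly for $1/2\le|\eta|\le 2$, since the log singularity is integrable on the sphere for $n\ge2$.

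For $|\eta|>2$, however, it grows like $\log|\eta|$, so the region $|y|>2r$ needs separate treatment. There we expand $\log\frac{r}{|x-y|}=\log\frac{r}{|y|}+O(r/|y|)$. The $\log\frac{r}{|y|}$ part is again independent of the direction of $x$ and cancels in $u-\bar u$. What remains is $O(1)\cdot|\nu|(\{|y|>2r\})\to0$.

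\textbf{Conclusion.} Combining these estimates gives $\fint_{\partial B_r(0)}|u-\bar u|\ud\sigma\le o(1)+C\epsilon_r\to0$.

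The main obstacle is the annulus $r/2\le|y|\le2r$, where the kernel is singular. The uniform integrability of $\log|\xi-\eta|$ over $\ms^{n-1}$ handles it, much as in the estimate in the proof of Lemma \ref{lem:bar u}, but now without the lossy $\log(2r+1)$ factor thanks to the rescaling by $r$. Completeness of $g$ is not needed beyond what ensures normality.
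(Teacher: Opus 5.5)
Your argument is correct and is essentially the paper's proof. After the rescaling $x=r\xi$, $y=r\eta$, both remove the direction-independent part of the kernel and use two facts: the spherical oscillation of $\log|\xi-\eta|$ is uniformly bounded in $\eta$, and it tends to $0$ as $\eta\to 0$. The difference is only bookkeeping. You treat $|\eta|<\tfrac12$, $\tfrac12\le|\eta|\le 2$ and $|\eta|>2$ separately, with explicit kernel bounds and the tail mass $|\nu|(\mr^n\setminus B_{r/2}(0))\to 0$. The paper instead packages everything into the single bounded function $I_0(\eta)$ and applies dominated convergence once.
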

\begin{proof}
	Based on the equation \eqref{normal solution}, one has
	$$u(x)-\bar u(r)=-\int_{\mr^n}\left(\log|x-y|-M_r(y)\right)\ud\nu(y)$$
	where 
	$$M_r(y)=\fint_{\partial B_r(0)}\log|z-y|\ud\sigma(z).$$
	
	Use the polar coordinate $x=r\xi$ where $\xi\in \ms^{n-1}$ and set $\eta=\frac{y}{r}$ to have
	$$\log|x-y|=\log r+\log|\xi-\eta|,$$
	$$M_r(y)=\log r+F(\eta), \quad F(\eta):=\frac{1}{|\ms^{n-1}|}\int_{\ms^{n-1}}\log|\theta-\eta|\ud\sigma(\theta).$$
	Then, apply Fubini's theorem to see that,  there holds
	$$\fint_{\partial B_r(0)}|u-\bar u|\ud\sigma\leq \int_{\mr^n}I_0(\eta)\ud|\nu|(y)$$
	where 
	$$I_0(\eta):=\frac{1}{|\ms^{n-1}|}\int_{\ms^{n-1}}\left|\log|\xi-\eta|-F(\eta)\right|\ud\sigma(\xi).$$
It is not hard to check that $I_0(\eta)$ is uniformly bounded and continuous.
For $|\eta|\geq \frac{3}{2}$ and $\xi, \theta\in \ms^{n-1}$, one has 
$$\frac{1}{5}\leq \frac{|\eta|-1}{|\eta|+1}\leq \frac{|\xi-\eta|}{|\theta-\eta|}\leq \frac{|\eta|+1}{|\eta|-1}\leq 5$$
which yields that
$\left|\log|\xi-\eta|-F(\eta)\right|\leq \log 5$ and then $I_0(\eta)\leq \log 5$ for $|\eta|\geq \frac{3}{2}$.  Notice that  $\frac{1}{2} \leq |\eta|\leq \frac{3}{2}$, for each $\eta$, we project it into $S^{n-1}$, that is, to write ${\bar \eta } = \frac{\eta}{|\eta|}$. Then a direct computation shows that  $I_0({\bar \eta})$ is uniformly bounded and $|\log| (\frac{1}{|\eta|} - 1)|$ is bounded.  This is enough to see that $I_0(\eta)$ is bounded on this region. Finally on the set $|\eta| \leq \frac{1}{2}$,  $|\log|\theta - \eta|| \leq \log 2$. This is enough to conclude that $I_0({\eta})$is uniformly bounded for all $\eta$. Besides,   one can easily see that  $\lim_{\eta\to 0}I_0(\eta)=0.$

With the estimate for $I_0(\eta)$, on one hand, we clearly have
$$\int_{\mr^n}I_0(\eta)\ud|\nu|(y)\leq C\int_{\mr^n}\ud|\nu|(y)\leq C.$$
On the other hand, for almost all $y$ in the $n$ dimensional measure, one has 
$$\lim_{r\to\infty}I_0\left(\frac{y}{r}\right)=0.$$
Thus, Lebesgue's dominated convergence theorem can be applied to conclude our claim  and to finish the proof of Lemma \ref{lem:|w|}.
\end{proof}

We continue to do the spherical average estimates of conformal factor $u$ in various form.

The following lemma plays an important role in our later argument. In fact, it has already played many roles in previous works. It was first proved by Finn~\cite{Finn} in the two-dimensional setting. Later, it was proved by Chang, Qing, and Yang~\cite{CQY Duke} in the four-dimensional form. We would like to point out that the method in fact works for all dimensions as long as the metric is normal. In \cite{Li conformal}, the first author gave a modified proof that is slightly different from earlier approaches. Although Lemma~2.7 in \cite{Li conformal} assumes $k>0$, the method also works for $k \le 0$.

\begin{lemma}\label{CQY lemma}
Let  $g=e^{2u}|dx|^2$ be a complete normal metric on $\mr^n$ with dimension $n \geq 2$.
	For any real number $k$, the function $u$  satisfies the equation \eqref{normal solution}. Then $e^{ku}$ is almost a spherical symmetric function near infinity, that is, 
	$$\fint_{\partial B_r(0)}e^{ku}\ud\sigma=e^{k\bar u(r)+o(1)}$$
	where $o(1)\to 0$ as $r\to\infty.$
\end{lemma}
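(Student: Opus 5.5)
Set $w:=u-\bar u(r)$ on $\partial B_r(0)$. By Jensen's inequality $\fint_{\partial B_r(0)}e^{kw}\ud\sigma\geq e^{k\fint w}=1$, so the lower bound $\fint e^{ku}\geq e^{k\bar u}$ is automatic. It remains to show $\fint_{\partial B_r(0)}e^{kw}\ud\sigma\leq 1+o(1)$. Lemma \ref{lem:|w|} gives $w\to0$ in $L^1(\partial B_r(0),\fint)$, hence in measure. So it suffices to get uniform integrability of $e^{kw}$ on the normalized spheres, for instance a uniform bound on $\fint e^{p|k||w|}\ud\sigma$ for some $p>1$. Vitali's theorem then gives $\fint e^{kw}\to1$.

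\textbf{Step 1: representation of $w$.} As in the proof of Lemma \ref{lem:|w|}, write $x=r\xi$ and $\eta=y/r$. Then
$$w(x)=-\int_{\mr^n}\bigl(\log|\xi-\eta|-F(\eta)\bigr)\ud\nu(y).$$
Fix $\epsilon>0$ and split $\nu=\nu_1+\nu_2$, where $\nu_1$ is the restriction to $\{y:\ r/2\le|y|\le 2r\}$ and $\nu_2$ is the rest. This gives $w=w_1+w_2$.

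\textbf{Step 2: the far part $w_2$.} For $|\eta|\ge2$ or $|\eta|\le1/2$, the kernel $\log|\xi-\eta|-F(\eta)$ is bounded by a constant. For $|\eta|\le1/2$ it is moreover $O(|\eta|)$, since both terms equal $O(|\eta|)$ there. So
$$|w_2|\le C\int_{|y|\ge 2r}\ud|\nu|+C\int_{|y|\le r/2}\min\{1,|y|/r\}\ud|\nu|(y).$$
This tends to $0$ uniformly in $x\in\partial B_r(0)$ by dominated convergence. Hence $e^{kw_2}\to1$ uniformly.

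\textbf{Step 3: the annular part $w_1$ (main obstacle).} Here the logarithmic kernel is singular, so pointwise control fails. I will use Jensen's inequality in $y$, which is the standard Brezis--Merle type trick. Let $m_r:=\int_{r/2\le|y|\le2r}\ud|\nu|$, so $m_r\to0$. Since $|F|\le C$ on this annulus,
$$e^{|k||w_1|}\le e^{C|k|m_r}\exp\Bigl(|k|\int\bigl|\log|\xi-\eta|\bigr|\ud|\nu_1|\Bigr)\le e^{C|k|m_r}\int \frac{\ud|\nu_1|}{m_r}\,e^{|k|m_r|\log|\xi-\eta||}.$$
Once $|k|m_r<(n-1)/2$, say, the quantity $\fint_{\ms^{n-1}}e^{|k|m_r|\log|\xi-\eta||}\ud\sigma(\xi)$ is bounded uniformly for $|\eta|\in[1/2,2]$. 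This is because $|\xi-\eta|^{-s}$ is integrable on $\ms^{n-1}$ for $s<n-1$. Averaging in $\xi$ and using Fubini then gives $\fint e^{|k||w_1|}\le C$. Replacing $k$ by $pk$, with $p$ fixed, gives the same bound, so $e^{kw_1}$ is uniformly bounded in $L^p$ of the normalized sphere. For $n=2$ the same argument works with the exponent restricted to $s<1$.

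\textbf{Step 4: conclusion.} By H\"older's inequality, $e^{kw}=e^{kw_1}e^{kw_2}$ is uniformly bounded in $L^p$ of the normalized spheres for large $r$. Together with $w\to0$ in measure (from Lemma \ref{lem:|w|}), Vitali's theorem yields $\fint_{\partial B_r(0)}e^{kw}\ud\sigma\to1$. Therefore
$$\fint_{\partial B_r(0)}e^{ku}\ud\sigma=e^{k\bar u(r)}\bigl(1+o(1)\bigr)=e^{k\bar u(r)+o(1)}.$$
This argument works for every real $k$, including $k\le0$; for $k=0$ the statement is trivial.
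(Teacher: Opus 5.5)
Your proof is correct, but it is organized differently from the paper's.

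The paper decomposes $u$ itself at $|y|=|x|/2$ into three terms: a radial term $D_1$, a uniformly small term $D_2$, and a term $D_3$ carrying all of $\nu$ outside $B_{|x|/2}(0)$. It must then check separately that $\fint_{\partial B_r(0)}D_3\,\ud\sigma\to0$. It bounds $\fint e^{kD_3}\ud\sigma$ sharply as follows. It splits $2kD_3$ according to the positive and negative parts of $kQ_g^{(n)}$ and applies Jensen to each normalized measure, with exponent equal to the vanishing tail mass. It then uses $\limsup_{\epsilon\to0}\fint_{\partial B_1(0)}(|\xi_0|/|\xi-\xi_0|)^{\epsilon}\ud\sigma\le1$ for $\frac12\le|\xi_0|\le2$, and recombines the two pieces with Cauchy--Schwarz.

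You instead work with the centered kernel $\log|\xi-\eta|-F(\eta)$ of $w$. This removes the radial part automatically, and everything off the annulus $\frac r2\le|y|\le2r$ is uniformly small. On the annulus you only need a crude, sign-free Brezis--Merle bound. The sharpness is then supplied by Lemma \ref{lem:|w|} together with Vitali's theorem. This is legitimate, since Lemma \ref{lem:|w|} is proved independently of this lemma; formally, pull back to $\mathbb{S}^{n-1}$ and argue along sequences $r_j\to\infty$.

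What each route buys:
\begin{itemize}
\item Yours avoids the sign bookkeeping and the uniform $\epsilon\to0$ kernel estimate. It also gives directly that $e^{kw}\to1$ in $L^1$ of the normalized spheres. That is exactly what Section 4 later uses (the limits $\fint|e^{w}-1|\to0$ and $\fint|e^{nw}-1|\to0$), which the paper obtains only by combining this lemma with Lemma \ref{lem:|w|}.
\item The paper's argument is self-contained and needs no compactness step.
\end{itemize}

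In fact your Step 3 already yields the sharp bound without Vitali. For $\frac12\le|\eta|\le2$ one has $\frac12\bigl|\xi-\eta/|\eta|\bigr|\le|\xi-\eta|\le3$, hence $\sup_{1/2\le|\eta|\le2}\fint_{\mathbb{S}^{n-1}}e^{s|\log|\xi-\eta||}\ud\sigma(\xi)\to1$ as $s\to0$. Taking $s=|k|m_r\to0$ gives $\limsup_{r\to\infty}\fint_{\partial B_r(0)}e^{kw}\ud\sigma\le1$ directly.
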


\begin{proof}
	First, we split $u(x)$ into three parts
	\begin{align*}
	u(x)= &\int_{B_{\frac{|x|}{2}}(0)}\log\frac{|y|}{|x|}\ud \nu(y)+C\\
		&+\int_{B_{\frac{|x|}{2}}(0)}\log\frac{|x|}{|x-y|}\ud \nu(y)\\
		&+\int_{\mr^n\backslash B_{\frac{|x|}{2}}(0)}\log\frac{|y|}{|x-y|}\ud \nu(y)\\
		=&:D_1(x)+D_2(x)+D_3(x).
	\end{align*}

First it is obvious to see that $D_1(x)$ is radially symmetric. Thus,  for any $x\in \partial B_r(0)$, there holds
\begin{equation}\label{fint D_1}
	D_1(x)=\fint_{\partial B_r(0)}D_1\ud\sigma.
\end{equation}
	
	In order to treat the term $D_2(x)$, for $|x|\gg1$,  we split it into two sub-terms:
$$
		D_2(x)=\int_{B_{\sqrt{|x|}}(0)}\log\frac{|x|}{|x-y|}\ud \nu(y)
		+\int_{B_{\frac{|x|}{2}}(0)\backslash B_{\sqrt{|x|}}(0)}\log\frac{|x|}{|x-y|}\ud \nu(y).
$$
	By a direct computation, together with the fact that $Q^{(n)}_ge^{nu}\in L^1(\mr^n)$, one gets the estimate:
	$$\left|\int_{B_{\sqrt{|x|}}(0)}\log\frac{|x|}{|x-y|}\ud \nu(y)\right|\leq C\log\frac{\sqrt{|x|}}{\sqrt{|x|}-1}$$
	and 
	$$\left|\int_{B_{\frac{|x|}{2}}(0)\backslash B_{\sqrt{|x|}}(0)}\log\frac{|x|}{|x-y|}\ud \nu(y)\right|\leq C\int_{B_{\frac{|x|}{2}}(0)\backslash B_{\sqrt{|x|}}(0)}|Q^{(n)}|_ge^{nu}\ud y.$$
Since the right hand side of either term will tend to zero as  $|x|\to\infty$,  we obtain that
	\begin{equation}\label{D_2 to 0}
		D_2(x) = o(1),
	\end{equation}
	as $ r = |x| \to \infty.$
	
	Now we come to term $D_3(x)$. Using the elementary inequality $|t|\leq e^t+e^{-t}$ and Fubini's theorem, one can easily see that there holds
	\begin{align*}
		\left|\fint_{\partial B_r(0)}D_3\ud\sigma\right|\leq &C\int_{\mr^n\backslash B_{|x|/2}(0)}\fint_{\partial B_r(0)}|\log\frac{|y|}{|x-y|}|\ud\sigma\ud|\nu|(y)\\
		\leq &C\int_{\mr^n\backslash B_{|x|/2}(0)}\fint_{\partial B_r(0)}\left(\frac{|y|}{|x-y|}+\frac{|x-y|}{|y|}\right)\ud\sigma\ud|\nu|(y)\\
		\leq &C\int_{\mr^n\backslash B_{|x|/2}(0)}\fint_{\partial B_r(0)}\left(\frac{|y|}{|x-y|}+3\right)\ud\sigma\ud |\nu|(y)
	\end{align*}
	The estimate \eqref{LL inequality} gives arise
	\begin{equation*}
		\left|\fint_{\partial B_r(0)}D_3\ud\sigma\right|\leq C\int_{\mr^n\backslash B_{r/2}(0)}|Q^{(n)}_g|e^{nu}\ud y
	\end{equation*}
	which tends to zero as $r\to\infty$ since $Q^{(n)}_ge^{nu}\in L^1(\mr^n)$.
	Combine the identity \eqref{fint D_1} and  and the estimate \eqref{D_2 to 0} to have
	\begin{equation}\label{bar u=D_1+o(1)}
		u(x)-\bar u(|x|)=D_3(x)+o(1)
	\end{equation}
	where $o(1)\to 0$ as $r\to\infty$.
	
In order to further treat $D_3(x)$, we have to split it into also two terms
	\begin{align*}
		2kD_3(x)= &\frac{2}{(n-1)!|\mathbb{S}^n|}\int_{\mr^n\backslash B_{|x|/2}(0)}\log\frac{|y|}{|x-y|}(2kQ^{(n)}_g)^+e^{nu}\ud y\\
		&+\frac{2}{(n-1)!|\mathbb{S}^n|}\int_{\mr^n\backslash B_{|x|/2}(0)}\log\frac{|x-y|}{|y|}(2kQ^{(n)}_g)^-e^{nu}\ud y\\
		=&:E_1(x)+E_2(x).
	\end{align*}

Now by the assumption, for any small $\epsilon$ satisfying $0<\epsilon<\frac{1}{2}$,  there exists $r(\epsilon)>0$ such that,   if $|x|\geq r(\epsilon) $,
	$$\frac{2\|(2kQ^{(n)}_g)^+e^{nu}\|_{L^1(\mr^n\backslash B_{|x|/2}(0))}}{(n-1)!|\mathbb{S}^n|}\leq \epsilon.$$
If $\|(2kQ^{(n)}_g)^+e^{nu}\|_{L^1(\mr^n\backslash B_{|x|/2}(0))}>0$ and $r\geq r_1$,  denote by $(\ud y)^+$ the measure $\frac{(2kQ^{(n)}_g)^+e^{nu}\ud y}{\|(kQ^{(n)}_g)^+e^{nu}\|_{L^1(\mr^n\backslash B_{|x|/2}(0))}}$ and apply Jensen's inequality, together with Fubini's theorem to obtain:
	\begin{align*}
		&\fint_{\partial B_r(0)}e^{E_1(x)}	\ud\sigma\\
			\leq &\fint_{\partial B_r(0)}\int_{\mr^n\setminus B_{|x|/2}(0)}\left(\frac{|y|}{|x-y|}\right)^{\frac{2\|(2kQ^{(n)}_g)^+e^{nu}\|_{L^1(\mr^n\setminus B_{|x|/2}(0))}}{(n-1)!|\mathbb{S}^n|}} (\ud y)^+\ud\sigma\\
		\leq &\fint_{\partial B_r(0)}\int_{\mr^n\setminus B_{|x|/2}(0)}\left(\frac{|y|}{|x-y|}\right)^{\epsilon} (\ud y)^+\ud\sigma\\
		\leq & \int_{\mr^n\setminus B_{|x|/2}(0)}	\fint_{\partial B_r(0)}\left(\frac{|y|}{|x-y|}\right)^{\epsilon}\ud\sigma (\ud y)^+.
	\end{align*}
For $|y|\geq 2|x|$, one has
$$\left(\frac{|y|}{|x-y|}\right)^{p_1(x)}\leq 2^{\epsilon}.$$
For $|x|/2\leq |y|\leq 2|x|$, after suitable changing of variables, one has
$$
\fint_{\partial B_r(0)}\left(\frac{|y|}{|x-y|}\right)^{\epsilon}\ud\sigma(x)=\fint_{\partial B_1(0)}\left(\frac{|\xi_0|}{|\xi-\xi_0|}\right)^{\epsilon}\ud\sigma(\xi)$$
where $\frac{1}{2}\leq |\xi_0|\leq 2.$ In each  situation,  it not hard to check that 
$$\limsup_{\epsilon\to0}\fint_{\partial B_1(0)}\left(\frac{|\xi_0|}{|\xi-\xi_0|}\right)^{\epsilon}\ud\sigma(\xi)\leq 1.$$
Based on the arbitrary choice of $\epsilon$, one has 
	\begin{equation}\label{e^E_1}
	\limsup_{r\to\infty}\fint_{\partial B_r(0)}e^{E_1(x)}	\ud\sigma\leq 1.
\end{equation}	
In fact, 	for $n\geq 3$, by the estimate \eqref{LL inequality}, we can even obtain that 
	\begin{equation*}
		\fint_{\partial B_r(0)}e^{E_1(x)}	\ud\sigma\leq 1.
	\end{equation*}

	However, on the other hand, if $\|(2kQ^{(n)}_g)^+e^{nu}\|_{L^1(\mr^n\backslash B_{|x|/2}(0))}=0$, then the inequality \eqref{e^E_1} holds trivially.

	Now let us focus on the last term $E_2(x)$. First of all, it is easy check that for $y\in \mr^n\setminus B_{|x|/2}(0)$, the following estimate holds:
	$\frac{|x-y|}{|y|}\leq 3.$
	
	As in the case for $E_1$, we just need to deal with $\|(2kQ^{(n)}_g)^-e^{nu}\|_{L^1(\mr^n\backslash B_{|x|/2}(0))}>0$. 
	In this scenario,  we denote $\frac{(2kQ^{(n)}_g)^-e^{nu}}{\|(2kQ^{(n)}_g)^-e^{nu}\|_{L^1(\mr^n\setminus B_{|x|/2}(0))}} dy$ by $(\ud y)^-$
	\begin{align*}
		&\fint_{\partial B_r(0)}e^{E_2(x)}\ud\sigma\\
		\leq &\int_{\mr^n\setminus B_{|x|/2}(0)}\fint_{\partial B_r(0)}\left(\frac{|x-y|}{|y|}\right)^{\frac{2\|(2kQ^{(n)}_g)^-e^{nu}\|_{L^1(\mr^n\setminus B_{|x|/2}(0))}}{(n-1)!|\mathbb{S}^n|}}\ud\sigma (\ud y)^-\\
		\leq &3^{\frac{2\|(2kQ^{(n)}_g)^-e^{nu}\|_{L^1(\mr^n\setminus B_{|x|/2}(0))}}{(n-1)!|\mathbb{S}^n|}}
	\end{align*}
	which deduces that
	\begin{equation}\label{e^E_2}
		\limsup_{r\to\infty}\fint_{\partial B_r(0)}e^{E_2(x)}\ud\sigma\leq 1
	\end{equation}
since $\|(2kQ^{(n)}_g)^-e^{nu}\|_{L^1(\mr^n\backslash B_{|x|/2}(0))}\to 0$ as $|x|\to\infty.$
	H\"older's inequality, together with the estimates \eqref{e^E_1}, \eqref{e^E_2}, can be applied to obtain:
	$$
	\limsup_{r\to\infty} \fint_{\partial B_r(0)} e^{kD_3(x)}\ud\sigma\leq \limsup_{r\to\infty}\left(\fint_{\partial B_r(0)} e^{E_1(x)}\ud\sigma\right)^{\frac{1}{2}}\left(\fint_{\partial B_r(0)} e^{E_2(x)}\ud\sigma\right)^{\frac{1}{2}}\leq 1.
	$$
	Combine this estimate with the estimates \eqref{bar u=D_1+o(1)} to achieve:
	$$
	\limsup_{r\to\infty}\fint_{\partial B_r(0)} e^{ku-k\bar u}\ud\sigma=	\limsup_{r\to\infty}\fint_{\partial B_r(0)} e^{kD_3}\ud\sigma
	\leq 1.$$ 
	Then the following estimate is just the application of Jensen's inequality:
	$$ \fint_{\partial B_r(0)} e^{ku}\ud\sigma\geq e^{k\bar u}.$$
	
	Thus, we reach at
	$$\lim_{r\to\infty}\fint_{\partial B_r(0)} e^{ku-k\bar u}\ud\sigma=1.$$
	
	This is enough to close the proof.
\end{proof}

From now on, for our easy presentation, we set  $$\bar g: = e^{2\bar u}|dx|^2, \quad \ud \mu_{\bar g}:=e^{n\bar u}\ud x$$ and $$w(x):=u(x)-\bar u(|x|)$$ in the rest of article.

As a last piece of preparation, we show the following two lemmas.

\begin{lemma}\label{lem:e^kw}
		For a smooth complete normal metric $g=e^{2u}|dx|^2$ on $\mr^n$ with $n\geq 2$ and  $\alpha_0<1$, there holds, 
	$$\lim_{r\to\infty}\frac{\int_{B_r(0)}e^{ k w}\ud \mu_{\bar g}}{\int_{B_r(0)}\ud\mu_{\bar g}}=1,$$
	where $ k\in \mr$ is a fixed constant.
\end{lemma}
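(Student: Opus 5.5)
Since $\bar u$ is radial, polar coordinates give
\[
\int_{B_r(0)}e^{kw}\ud\mu_{\bar g}=\int_0^r e^{n\bar u(s)}\Big(\int_{\partial B_s(0)}e^{kw}\ud\sigma\Big)\ud s
=\int_0^r e^{n\bar u(s)}|\partial B_s(0)|\,\phi(s)\ud s,
\]
where
\[
\phi(s):=\fint_{\partial B_s(0)}e^{k(u-\bar u)}\ud\sigma .
\]
Lemma \ref{CQY lemma}, applied with the constant $k$, gives $\fint_{\partial B_s(0)}e^{ku}\ud\sigma=e^{k\bar u(s)+o(1)}$. Since $\bar u(s)$ is constant on $\partial B_s(0)$, this says exactly $\phi(s)=e^{o(1)}\to 1$ as $s\to\infty$. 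The claim therefore reduces to a Ces\`aro-type averaging statement: the ratio is a weighted average of $\phi$ with weight $\rho(s):=|\mathbb{S}^{n-1}|s^{n-1}e^{n\bar u(s)}$.

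The approach is as follows. Fix $\epsilon>0$ and choose $R$ so that $|\phi(s)-1|<\epsilon$ for $s\ge R$. Then
\[
\left|\frac{\int_0^r\rho\phi\,\ud s}{\int_0^r\rho\,\ud s}-1\right|\le \epsilon+\frac{\int_0^R\rho|\phi-1|\,\ud s}{\int_0^r\rho\,\ud s}.
\]
The numerator of the last term is a fixed finite number, because $u$ is smooth and hence $\phi$ and $\rho$ are bounded on $[0,R]$. It therefore suffices to show that the denominator $\int_{B_r(0)}\ud\mu_{\bar g}=\int_0^r\rho\,\ud s$ tends to $+\infty$ as $r\to\infty$.

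This divergence is the only step that uses $\alpha_0<1$, and I expect it to be the main (though mild) obstacle. By Lemma \ref{lem:bar u}, $\bar u(s)=(-\alpha_0+o(1))\log s$. Hence for any $\delta>0$ and all large $s$,
\[
\rho(s)\ge c\,s^{n-1-n(\alpha_0+\delta)}.
\]
Choosing $\delta$ small enough that $n(\alpha_0+\delta)<n$, which is possible since $\alpha_0<1$, the exponent exceeds $-1$, so $\int^\infty\rho\,\ud s=\infty$. If one wants a cleaner estimate, Lemma \ref{lem:ru'} could instead be used to control $\bar u$ via $s\bar u'(s)\to-\alpha_0$, but Lemma \ref{lem:bar u} suffices. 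Letting $r\to\infty$ and then $\epsilon\to 0$ concludes the proof.
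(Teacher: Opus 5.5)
Your proposal is correct and follows essentially the paper's argument. You use $\alpha_0<1$ with Lemma \ref{lem:bar u} to get $\int_{B_r(0)}e^{n\bar u}\,dx\to\infty$, and Lemma \ref{CQY lemma} to get $\fint_{\partial B_s(0)}e^{kw}\,d\sigma\to 1$; your explicit $\epsilon$–$R$ weighted-averaging step simply replaces the paper's appeal to L'H\^opital's rule.
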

\begin{proof}
	Duo to the assumption that $\alpha_0<1$, by Lemma \ref{lem:bar u}, it is easy to show that
 \begin{equation}\label{infinte V_bar g}
 	\int_{B_r(0)}e^{n\bar u}\ud x\to\infty, \quad \mathrm{as}\; r\to\infty.
 \end{equation}
This estimate, together with Lemma \ref{CQY lemma} and L'H\^opital's rule, can be used to conclude that 
	\begin{align*}
		&\lim_{r\to\infty}\frac{\int_{B_r(0)}e^{ k w}\ud \mu_{\bar g}}{\int_{B_r(0)}\ud\mu_{\bar g}}\\
		=&\lim_{r\to\infty}\frac{\int_{\partial B_r(0)}e^{kw}e^{n\bar u}\ud\sigma}{\int_{\partial B_r(0)}e^{n\bar u(r)}\ud\sigma}\\
		=&\lim_{r\to\infty}\fint_{\partial B_r(0)} e^{kw}\ud\sigma=1.
	\end{align*}
\end{proof}

The following lemma concerning the radially symmetric metric $\bar g = e^{2\bar u}|dx|^2$ is easy but important. Therefore, we state it as a separated lemma.

\begin{lemma}\label{lem: bar g}
For a smooth complete normal metric $g=e^{2u}|dx|^2$ on $\mr^n$ with $n\geq 2$, the corresponding radially symmetric metric $\bar g=e^{2\bar u}|dx|^2$ is also a complete, normal metric. Moreover,  they share that same integral of $n$-th order $Q$-curvature.
\end{lemma}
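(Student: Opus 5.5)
The idea is to average the integral representation \eqref{normal solution} over spheres. This produces an explicit integral representation for $\bar u$, and from it we read off normality, completeness and the total $Q$-curvature of $\bar g$.

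\textbf{Step 1: a kernel for $\bar u$.} Averaging \eqref{normal solution} over $\partial B_r(0)$ and using Fubini gives
\[
\bar u(r)=\int_{\mr^n}\Bigl(\log|y|-\fint_{\partial B_r(0)}\log|x-y|\,\ud\sigma(x)\Bigr)\ud\nu(y)+C .
\]
The inner spherical mean of $\log|x-y|$ is a function of $|y|$ and $r$ alone; call it $M_r(y)$, as in Lemma \ref{lem:|w|}. For $n=2$ one has exactly $M_r(y)=\log\max\{r,|y|\}$. For $n\ge 3$ it is no longer exactly this, but it is still an explicit radial kernel.

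\textbf{Step 2: define the measure for $\bar g$.} Let $\bar\nu$ be the rotational average of $\nu$: for radial test functions it agrees with $\nu$, and on spherical shells it is uniformly distributed. By symmetry of the kernel,
\[
\int_{\mr^n}\log\frac{|y|}{|x-y|}\,\ud\bar\nu(y)
\]
is radial in $x$, and its spherical average equals that of the same integral against $\nu$, namely $\bar u(|x|)-C$. Hence
\[
\bar u(x)=\int_{\mr^n}\log\frac{|y|}{|x-y|}\,\ud\bar\nu(y)+C .
\]
Applying \eqref{Green function} gives $(-\Delta)^{n/2}\bar u=\frac{(n-1)!|\mathbb{S}^n|}{2}\bar\nu$. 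Since $\nu$ has a continuous density, so does $\bar\nu$, and the function $\bar Q:=\frac{(n-1)!|\mathbb{S}^n|}{2}\,\frac{\ud\bar\nu}{\ud x}\,e^{-n\bar u}$ is the $Q$-curvature of $\bar g$. It satisfies $\bar Q e^{n\bar u}\,\ud x=\frac{(n-1)!|\mathbb{S}^n|}{2}\ud\bar\nu\in L^1$, because $|\bar\nu|(\mr^n)\le|\nu|(\mr^n)<\infty$. The total mass is $\bar\nu(\mr^n)=\nu(\mr^n)$, which proves both finite total curvature and equality of the integrals. Normality of $\bar g$ is exactly the representation above. For odd $n$, the well-definedness in the $L_{1/2}$ sense follows because averaging preserves the relevant integrability.

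\textbf{Step 3: completeness (the main obstacle).} Since $\bar g$ is radial, completeness amounts to
\[
\int_1^\infty e^{\bar u(r)}\,\ud r=\infty .
\]
Lemma \ref{lem:bar u} gives $\bar u=(-\alpha_0+o(1))\log r$, which settles the case $\alpha_0<1$. The borderline case $\alpha_0=1$ is the delicate one, since the known inequality \eqref{CV for Q} only gives $\alpha_0\le 1$.

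The first thing to try is to transfer completeness from $g$. Along a radial ray, use Jensen/Hölder together with Lemma \ref{CQY lemma} (with $k=1$): $\fint_{\partial B_r(0)}e^{u}\,\ud\sigma=e^{\bar u(r)+o(1)}$. Then
\[
\int_1^R e^{\bar u}\,\ud r\;\ge\; c\int_1^R\fint_{\partial B_r(0)}e^{u}\,\ud\sigma\,\ud r
=c\fint_{\mathbb{S}^{n-1}}\int_1^R e^{u(r\xi)}\,\ud r\,\ud\sigma(\xi).
\]
Each radial ray $\int_1^\infty e^{u(r\xi)}\,\ud r$ diverges by completeness of $g$, so Fatou's lemma forces the left side to diverge.

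Alternatively, one can invoke the known fact (Chang–Qing–Yang, Ndiaye–Xiao) that completeness of a normal metric with finite total curvature forces $\alpha_0\le 1$, and that at $\alpha_0=1$ the Finn-type argument still gives divergence. I expect the Fatou argument to be the cleanest route.
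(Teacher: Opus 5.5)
Your proof is correct. The completeness part is essentially the paper's argument, and the normality part takes a genuinely different and more elementary route.

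\textbf{Completeness.} The paper also bounds $d_g(0,r\theta)\le\int_0^r e^{u(t\theta)}\,\ud t$, averages over $\theta$, and uses Fubini with Lemma \ref{CQY lemma} to compare with $\int_0^r e^{\bar u(t)}\,\ud t$. Your monotone-convergence/Fatou phrasing is the same idea. Some small clean-ups:
\begin{itemize}
\item Only the upper bound $\fint_{\partial B_r(0)}e^{u}\,\ud\sigma\le Ce^{\bar u(r)}$ from Lemma \ref{CQY lemma} is needed. Jensen goes the other way.
\item The argument works for every value of $\alpha_0$, so the preliminary case split is unnecessary.
\item Drop the appeal to $\alpha_0\le 1$. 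In this paper it is derived from the present lemma (Lemma \ref{lem:Cohn-Vossen}), so leaning on it would be circular.
\item Drop the vague ``Finn-type'' alternative.
\end{itemize}

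\textbf{Normality.} The paper starts from the PDE. It observes $(-\Delta)^{n/2}\bar u=\overline{(-\Delta)^{n/2}u}$ and forms the logarithmic potential $\mathcal{L}$ of this radial density. It then shows $|\bar u-\mathcal{L}|\le C\log(r+2)$ and concludes $\bar u-\mathcal{L}\equiv C$ from a Liouville theorem for polyharmonic functions.

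You instead average the representation formula itself over rotations. The kernel is jointly invariant, $\log\frac{|Ry|}{|Rx-Ry|}=\log\frac{|y|}{|x-y|}$, so Fubini directly gives $\bar u(x)=\int\log\frac{|y|}{|x-y|}\,\ud\bar\nu(y)+C$ with the rotationally averaged measure $\bar\nu$. This needs no growth comparison and no Liouville theorem. It also makes $|\bar\nu|(\mathbb{R}^n)\le|\nu|(\mathbb{R}^n)$ and $\bar\nu(\mathbb{R}^n)=\nu(\mathbb{R}^n)$ immediate.

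The one step your route must still supply is the pointwise identification of $\ud\bar\nu/\ud x$ with $\frac{2}{(n-1)!|\mathbb{S}^n|}e^{n\bar u}Q^{(n)}_{\bar g}$. Either of these gives it:
\begin{itemize}
\item your distributional use of \eqref{Green function}, combined with continuity of both sides;
\item the rotation-commutation $(-\Delta)^{n/2}\bar u=\overline{(-\Delta)^{n/2}u}$ that the paper uses.
\end{itemize}
With that noted, your argument is shorter and more self-contained than the paper's. The paper's route has the advantage of starting from the curvature equation, so the $Q$-curvature of $\bar g$ is identified from the outset.
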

\begin{proof}
The proof follows closely with the argument of Lemma 3.6 in \cite{CQY Duke}.
By the definition of distance function and the polar coordinate,  for any $r>0$ and $\theta\in \ms^{n-1}$, one has 
	$$d_g(0, r\theta)\leq \int^{r}_0e^{u(t\theta)}\ud t.$$
Since the metric $g$ is complete,  we have  $d_g(0,r\theta)\to\infty$ as $r\to\infty$.
Based on this estimate, we obtain that 
	$$\fint_{\ms^{n-1}}\int^{r}_0e^{u(t\theta)}\ud t\ud\sigma\to \infty, \quad \mathrm{as}\; r\to\infty.$$
With the help of Fubini's theorem and Lemma \ref{CQY lemma}, one has
$$\fint_{\ms^{n-1}}\int^{r}_0e^{u(t\theta)}\ud t\ud\sigma=\int^r_0e^{\bar u(t)+o(1)}\ud t$$
which yields that $d_{\bar g}(0,r\theta)\to \infty$ as $r\to\infty.$ Thus, $\bar g$ is also complete.
Noticing that 
\begin{equation*}
	(-\Delta)^{\frac{n}{2}}\bar u= \overline{(-\Delta)^{\frac{n}{2}} u}.
\end{equation*} Thus there holds
	$$\int_{\mr^n}(-\Delta)^{\frac{n}{2}}\bar u\ud x= \int_{\mr^n}(-\Delta)^{\frac{n}{2}} u\ud x$$
	and
	$$\int_{\mr^n}|(-\Delta)^{\frac{n}{2}}\bar u|\ud x\leq \int_{\mr^n}|(-\Delta)^{\frac{n}{2}}u|\ud x<+\infty.$$
Set the logarithmic potential
$$\mathcal{L}(x):=\frac{2}{(n-1)!|\mathbb{S}^n|}\int_{\mr^n}\log\frac{|y|}{|x-y|}(-\Delta)^{\frac{n}{2}}\bar u(|y|)\ud y.$$
It is easy to check that $\mathcal{L}(x)$ is also radially symmetric and then $\mathcal{L}=\bar{\mathcal{L}}$.
Following the argument  the Lemma \ref{lem:bar u}, one has $|\bar{\mathcal{L}}(r)|\leq C\log (r+2)$ and then $|\bar u(r)-\bar{\mathcal{L}}(r)|\leq C\log (r+2)$.
Using \eqref{Green function}, one has
$(-\Delta)^{\frac{n}{2}}(\bar u-\bar{\mathcal{L}})=0.$  Using the Liouville theorem for polyharmonic function (See Lemma 2.12 in \cite{Li Adv}), one has
$\bar u-\bar{\mathcal{L}}\equiv C.$ Then, we show that $\bar u$ is also normal.

This completes the proof of Lemma \ref {lem: bar g}.
\end{proof}

\section{$Q$-curvature and sectional curvature}\label{sec:Q and sec}

This section is devoted to establishing the relationship between the top 
$Q$-curvature and the sectional curvature of smooth complete normal metrics on $\mr^n$.  We recall that, with many people's great efforts, the  following Cohn-Vossen type inequality has been established, for example, through works \cite{CV}, \cite{Huber 57}, \cite{CQY Duke},  \cite{Fang},  and \cite{NX} and the reference therein.  For later use, we record this as a lemma.
\begin{lemma}\label{lem:Cohn-Vossen}
		For a smooth complete normal metric $g=e^{2u}|dx|^2$   on $\mr^n$ with 
	$n\geq 2$, there holds
	$$\int_{\mr^n}Q_g^{(n)}\ud\mu_g\leq \frac{(n-1)!|\mathbb{S}^n|}{2}.$$
\end{lemma}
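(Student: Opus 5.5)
The plan is to argue by contradiction and reduce everything to the radially symmetric metric $\bar g=e^{2\bar u}|dx|^2$ from Lemma \ref{lem: bar g}. Suppose $\alpha_0>1$. By Lemma \ref{lem: bar g}, $\bar g$ is complete and normal and has the same total $Q$-curvature. It therefore suffices to show that a complete metric $e^{2\bar u}|dx|^2$ with $\bar u$ radial forces $\alpha_0\le 1$.

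By Lemma \ref{lem:ru'}, $r\bar u'(r)\to-\alpha_0$. If $\alpha_0>1$, then for some $\delta>0$ and all large $r\ge r_0$ we would have $r\bar u'(r)\le -(1+\delta)$. Integrating gives $\bar u(r)\le \bar u(r_0)-(1+\delta)\log(r/r_0)$, so $e^{\bar u(r)}\le Cr^{-1-\delta}$. Consequently the radial length $\int_0^\infty e^{\bar u(t)}\,\ud t$ is finite, so $d_{\bar g}(0,r\theta)$ stays bounded as $r\to\infty$. This contradicts the completeness of $\bar g$, and gives $\alpha_0\le1$.

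The case $\alpha_0=1$ is allowed by the statement, so no strict inequality is needed. Alternatively, one could work with $u$ directly: the completeness argument in the proof of Lemma \ref{lem: bar g} shows that $\int_0^r e^{\bar u(t)+o(1)}\,\ud t\to\infty$, which requires $\bar u$ not to decay faster than $-(1+\delta)\log r$. Lemma \ref{lem:bar u} gives exactly this: $\bar u(r)=(-\alpha_0+o(1))\log r$, so $e^{\bar u}\le r^{-\alpha_0+o(1)}$, which is integrable when $\alpha_0>1$. That is the same contradiction, and it avoids even Lemma \ref{lem:ru'}.

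The main subtlety is the passage from completeness of $g$ to divergence of the averaged radial integral. This is handled by Lemma \ref{CQY lemma} with $k=1$: it gives $\fint_{\partial B_t}e^{u}=e^{\bar u(t)+o(1)}$, and Fubini over $\theta$ then shows that the averaged length $\fint_{\ms^{n-1}}\int_0^r e^{u(t\theta)}\,\ud t\,\ud\sigma$ is bounded whenever $\int^\infty e^{\bar u}<\infty$. So bounded average length means some ray $\theta$ has finite $g$-length. Hence the distance to infinity along that ray is finite, contradicting completeness of $g$ (a finite-length divergent curve leaves every compact set, which is impossible by Hopf–Rinow). Multiplying the conclusion $\alpha_0\le 1$ by $\frac{(n-1)!|\mathbb{S}^n|}{2}$ yields the inequality.
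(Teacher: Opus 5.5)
Your proposal is correct and is essentially the paper's proof: completeness of $\bar g$ (Lemma \ref{lem: bar g}) forces $\int_0^r e^{\bar u(s)}\,\ud s\to\infty$, and the logarithmic asymptotics of $\bar u$ then rule out $\alpha_0>1$. The paper uses Lemma \ref{lem:bar u} for this last step, which is exactly your ``alternative''; your main route through Lemma \ref{lem:ru'} is an equally valid variant.
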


\begin{proof}
With the help of Lemma \ref{lem: bar g}, we have
$$\int^r_0e^{\bar u(s)}\ud s\to \infty,\quad  \mathrm{as}\; r\to\infty.$$
Combine with Lemma \ref{lem:bar u} to yield:
$$\alpha_0\leq 1.$$
Thus the proof is complete.
\end{proof}

We now state and prove a result which builds up a connection  between the top $Q$-curvature and sectional curvature of the metric $g$. We should point out that our Theorem \ref{thm:positive sectional for normal metric} follows from this and the previous lemma.

\begin{theorem}\label{thm:positive sec for alpha_0 leq2}
Given  a smooth   normal metric $g=e^{2u}|dx|^2$   on $\mr^n$ where 
$n\geq 2$ and $\alpha_0\leq 2$. Then the non-negativity (respectively non-positivity) of the top $Q$-curvature everywhere on $\mr^n$ determines the non-negativity  (respectively non-positivity) of the sectional curvature everywhere in the same space.
\end{theorem}

\begin{proof}
	
We first notice that if $n=2$, the statements hold automatically. We only need to deal with $n\geq 3$.
For the conformal metric $g=e^{2u}|dx|^2$ on $\mr^n$, let $\vec{e}_1$ and $\vec{e}_2$ be  orthonormal unit vectors with respect to the Euclidean metric. Then the sectional curvature $K_g(\vec{e}_1,\vec{e}_2)$ with respect to $g$  of the linear subspace spanned by $\vec{e}_1$ and $\vec{e}_2$ satisfies
	\begin{eqnarray}\label{sec-equ}
		e^{2u}K_g(\vec{e}_1,\vec{e}_2) & =& - \nabla^2 u(\vec{e}_1,\vec{e}_1)-\nabla^2u(\vec{e}_2, \vec{e}_2) \nonumber\\
		& & +\langle \nabla u, \vec{e}_1\rangle^2+\langle \nabla u, \vec{e}_2\rangle^2-|\nabla u|^2.
	\end{eqnarray}
More details about conformal change can be found in Page 58 of the wonderful book \cite{Besse}.
Since the conformal factor $u$ satisfies the identity \eqref{normal solution}, a direct computation yields that 
	$$u_{i}(x)=-\int_{\mr^n}\frac{(x_i-y_i)}{|x-y|^2}\ud\nu(y)$$
	and 
	\begin{equation}\label{u_ij-repren}
		u_{ij}(x)=-\int_{\mr^n}\frac{\delta_{ij}}{|x-y|^2}\ud\nu(y)+2\int_{\mr^n}\frac{(x_i-y_i)(x_j-y_j)}{|x-y|^4}\ud\nu(y).
	\end{equation}

Up to a rotation of the coordinates, it is harmless to choose $\vec{e}_1=(1,0, \cdots, 0)$ and  $\vec{e}_2=(0,1, \cdots, 0)$.
Inserting these into the equation \eqref{sec-equ}, one has
\begin{align*}
	&e^{2u}K_g(\vec{e}_1,\vec{e}_2)\\
	=&-u_{11}-u_{22}-\sum^n_{i=3}u_i^2\\
	=&\int_{\mr^n}\frac{2}{|x-y|^2}\ud\nu(y)-2\int_{\mr^n}\frac{(x_1-y_1)^2+(x_2-y_2)^2}{|x-y|^4}\ud\nu(y)-\sum^n_{i=3}u_i^2\\
	=&\int_{\mr^n}\frac{2\sum^n_{i=3}(x_i-y_i)^2}{|x-y|^4}\ud\nu(y)-\sum^n_{i=3}u_i^2.
\end{align*}
This if $Q_g^{(n)}\leq 0$, then the measure $\ud\nu(y) \leq 0$. Thus it  is easy  to see that 
$$K_g(\vec{e}_1,\vec{e}_2)\leq 0.$$

On the other hand, if  $Q^{(n)}_g\geq 0$ everywhere on $\mr^n$, we clearly have the measure $\ud\nu(y) \geq 0$. Hence, with the help of H\"older's inequality, one has
$$\int_{\mr^n}\ud\nu(y)\int_{\mr^n}\frac{\sum^n_{i=3}(x_i-y_i)^2}{|x-y|^4}\ud\nu(y)\geq \sum^n_{i=3}u_i^2.$$
This estimate, together with the assumption $\alpha_0\leq 2$, immediately implies
$$e^{2u}K_g(\vec{e}_1,\vec{e}_2)\geq (2-\alpha_0)\int_{\mr^n}\frac{\sum^n_{i=3}(x_i-y_i)^2}{|x-y|^4}\ud\nu(y)\geq 0.$$
The proof of Theorem \ref{thm:positive sec for alpha_0 leq2} is complete. 
\end{proof}

\vspace{3em}

Motivated by our earlier work joint with Wei, Theorem 4.5 in \cite{LWX}, by employing  the integral representation of conformal factor under the assumption that the $Q^{(2k)}_g$-curvature has a slow decay barrier near infinity (See Theorem 2.6 in \cite{LX JFA}), we can, in fact, bound the sectional curvature from below in term of its scalar curvature.

To this end, let us consider a conformal metric $g=e^{2u}|dx|^2$ on $\mr^n$. For each integer $1\leq k<\frac{n}{2}$, $2k$-th order $Q$-curvature $Q^{(2k)}_g$ satisfies
\begin{equation}\label{Q_2k_def}
	(-\Delta)^ke^{\frac{n-2k}{2}u}=Q^{(2k)}_ge^{\frac{n+2k}{2}u}.
\end{equation}
Same as in \cite{LX JFA}, we say $Q^{(2k)}_g$ has slow decay barrier near infinity if,  for $|x|\gg1$ and $-2k<s\leq 0$, one has
$$Q_g^{(2k)}\geq C|x|^s.$$

Then we can state our findings as follows:
\begin{theorem}
	Let us fix two integers $n>2k$ and $k\geq 2$. Suppose  $g=e^{2u}|dx|^2$ on $\mr^n$ is a smooth complete conformal metric. Suppose $Q^{(2k)}_g$-curvature is non-negative and has a  slow decay barrier near infinity. Then the sectional curvature of the metric satisfies the lower bound estimate in terms of scalar curvature:
	$$sec_g\geq \frac{2k-n}{4(k-1)(n-1)}R_g.$$
\end{theorem}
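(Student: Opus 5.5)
The plan is to mimic the proof of Theorem \ref{thm:positive sec for alpha_0 leq2}, replacing the logarithmic representation of $u$ by a Riesz-potential representation of $v:=e^{\frac{n-2k}{2}u}$. Set $m:=n-2k>0$. The assumptions that $g$ is complete, that $Q^{(2k)}_g\geq 0$, and that $Q^{(2k)}_g$ has a slow decay barrier are exactly those of Theorem 2.6 in \cite{LX JFA}. I will invoke that result to obtain
$$v(x)=c_{n,k}\int_{\mr^n}\frac{Q^{(2k)}_g(y)e^{\frac{n+2k}{2}u(y)}}{|x-y|^{m}}\ud y=:\int_{\mr^n}\frac{\ud\mu(y)}{|x-y|^m},$$
with $\mu\geq 0$. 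The point is that this representation has no polyharmonic remainder. This is the only place where completeness and the barrier are used, and I regard it as the main input; everything afterwards is a pointwise algebraic inequality.

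Next I differentiate under the integral. With $J:=\int|x-y|^{-m-2}\ud\mu$ and $J_{ij}:=\int(x_i-y_i)(x_j-y_j)|x-y|^{-m-4}\ud\mu$ we have
$$v_i=-m\int\frac{x_i-y_i}{|x-y|^{m+2}}\ud\mu,\qquad v_{ij}=-m\delta_{ij}J+m(m+2)J_{ij},\qquad \Delta v=-m(2k-2)J.$$
Since $\mu\geq0$, Cauchy--Schwarz gives $|\nabla v|^2\leq m^2 vJ$, and more precisely $\sum_{i\geq3}v_i^2\leq m^2 v\sum_{i\geq3}J_{ii}$. Writing $u=\frac{2}{m}\log v$, I get $u_i=\frac{2}{m}\frac{v_i}{v}$ and $u_{ij}=\frac{2}{m}\bigl(\frac{v_{ij}}{v}-\frac{v_iv_j}{v^2}\bigr)$. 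I rotate so that the plane is spanned by $\vec e_1,\vec e_2$, as in the proof of Theorem \ref{thm:positive sec for alpha_0 leq2}.

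Substituting into \eqref{sec-equ} then expresses $v^2e^{2u}K_g(\vec e_1,\vec e_2)$ as a combination of $vJ$, $v(J_{11}+J_{22})$, $v_1^2+v_2^2$ and $\sum_{i\geq3}v_i^2$. Similarly, the conformal formula $e^{2u}R_g=-2(n-1)\Delta u-(n-1)(n-2)|\nabla u|^2$ expresses $v^2e^{2u}R_g$ in terms of $vJ$ and $|\nabla v|^2$; the Laplacian contributes the factor $2k-2$, which is why $k\geq2$ enters. The target inequality
$$K_g-\frac{2k-n}{4(k-1)(n-1)}R_g\geq0$$
is then a linear inequality in these quantities. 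The coefficient $\frac{2k-n}{4(k-1)(n-1)}$ should be exactly the one that cancels the $vJ$ terms coming from $\Delta u$ against those in $-u_{11}-u_{22}$. I expect to reduce to a statement of the form
$$A\,v\Bigl(J-J_{11}-J_{22}\Bigr)-B\sum_{i\geq3}v_i^2+C\,(v_1^2+v_2^2)\geq0$$
with $C\geq0$, and then conclude using $J-J_{11}-J_{22}=\sum_{i\geq3}J_{ii}$ and the refined Cauchy--Schwarz bound above, provided $A\geq m^2B$.

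The main obstacle is this bookkeeping: checking that, after choosing the constant, the leftover gradient terms carry the right signs, and that $A\geq m^2 B$. If the naive split fails, my first fallback is to keep part of the $|\nabla v|^2$ term from $R_g$ on the good side and apply Cauchy--Schwarz separately to the $\vec e_1,\vec e_2$ components and the orthogonal components. Equality in Cauchy--Schwarz, attained when $\mu$ is a point mass, should confirm that the constant is sharp.
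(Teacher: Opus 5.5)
Your proposal is correct and is essentially the paper's proof. Both use the same four steps:
\begin{itemize}
\item the Riesz-potential representation of $v=e^{\frac{n-2k}{2}u}$ from Theorem 2.6 of the Li--Xu paper;
\item differentiation under the integral sign;
\item the Cauchy--Schwarz bound $v_i^2\le m^2 vJ_{ii}$ in the directions orthogonal to the plane;
\item comparison with the conformal formula for $R_g$.
\end{itemize}
Your bookkeeping closes without any fallback. One gets exactly $v^2e^{2u}\bigl(K_g(\vec e_1,\vec e_2)-\frac{2k-n}{4(k-1)(n-1)}R_g\bigr)=2(m+2)\,v\sum_{i\ge3}J_{ii}-\frac{2(m+2)}{m^2}\sum_{i\ge3}v_i^2$, i.e.\ $C=0$ and $A=m^2B$. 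This is the paper's intermediate inequality $e^{2u}K_g\ge\frac{1}{k-1}\frac{\Delta v}{v}+\frac{2}{m}\frac{|\nabla v|^2}{v^2}$ written in different notation.
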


\begin{proof}
	We first define the function $v$ by $v(x):=e^{\frac{n-2k}{2}u(x)}$.  In terms of $v$, the equation \eqref{Q_2k_def} can be written as 
	$$(-\Delta)^kv=Q^{(2k)}_gv^{\frac{n+2k}{n-2k}}.$$
	Also in terms of $v$,  the scalar curvature of $g$ can be calculated by
	\begin{equation}\label{R_g repre}
	R_g=-\frac{4(n-1)}{n-2k}v^{-\frac{4}{n-2k}}\left(\frac{\Delta v}{v}+\frac{2(k-1)}{n-2k}\frac{|\nabla v|^2}{v^2}\right).
\end{equation}
With the help of Theorem 2.6 in \cite{LX JFA}, we have the following integral representation of $v(x)$ as follows 
	\begin{equation}\label{equ:integeral-for-v}
		v(x)=\frac{1}{C(n,k)}\int_{\mr^n}\frac{Q^{(2k)}_g(y)v(y)^{\frac{n+2k}{n-2k}}}{|x-y|^{n-2k}}\ud y.
	\end{equation}
Thus, for any $1\leq i\leq 2k-1$,  Lemma 2.4 in \cite{LX JFA} or a direct computation yields that 
	\begin{equation}\label{equ:nabla-for-v}
		\nabla^i v=\frac{1}{C(n,k)}\int_{\mr^n}\nabla^i_x|x-y|^{2k-n}Q^{(2k)}_g(y)v(y)^{\frac{n+2k}{n-2k}}\ud y
	\end{equation}
	where $C(n,k)$ is a positive constant depending on $n,k$.
	To simplify the formula, we denote by $\ud\mu(y)$ the integral measure, that is, 
	$$\ud\mu(y):=\frac{1}{C(n,k)}Q^{(2k)}_g(y)v(y)^{\frac{n+2k}{n-2k}}\ud y.$$
	 By the relation between $v$ and $u$, a straightforward calculation gives
	\begin{equation*}
		u_{i}=\frac{2}{n-2k}\frac{v_i}{v}
	\end{equation*}
and 
	\begin{equation}\label{u_ij}
		u_{ij}=\frac{2}{n-2k}\left(\frac{v_{ij}}{v}-\frac{v_iv_j}{v^2}\right).
	\end{equation}
In particular, by taking the trace of \eqref{u_ij}, one has
	\begin{equation}\label{Delta-u}
		\Delta u=\frac{2}{n-2k}\left(\frac{\Delta v}{v}-\frac{|\nabla v|^2}{v^2}\right).   
	\end{equation}
However, the equations \eqref{equ:integeral-for-v} and \eqref{equ:nabla-for-v} can be applied to do the following computations:
\begin{equation}\label{v_i repre}
	v_{i}=(2k-n)\int_{\mathbb{R}^n}\frac{(x_i-y_i)}{|x-y|^{n-2k+2}}\ud\mu(y)
\end{equation}
	and 
	$$v_{ij}=(2k-n)\int_{\mathbb{R}^n}\frac{\delta_{ij}}{|x-y|^{n-2k+2}}\ud\mu(y)+(2k-n)(2k-2-n)\int_{\mathbb{R}^n}\frac{(x_i-y_i)(x_j-y_j)}{|x-y|^{n-2k+4}} \ud\mu(y).$$
	Immediately, there holds 
	\begin{equation}\label{Delta v repre}
		\Delta v=(2k-n)(2k-2)\int_{\mathbb{R}^n}\frac{1}{|x-y|^{n-2k+2}}\ud\mu(y).
	\end{equation}
		As in the proof of previous theorem, without loss of generality, one may choose  $\vec{e}_1=(1,0, \cdots, 0)$ and  $\vec{e}_2=(0,1, \cdots, 0)$ and 	insert these vectors into the equation \eqref{sec-equ}. Then,  there holds
	\begin{align*}
			&e^{2u}K_g(\vec{e}_1,\vec{e}_2)\\
		=&-u_{11}-u_{22}+u_1^2+u_2^2-|\nabla u|^2\\
		=&-\frac{2}{n-2k}\left(\frac{v_{11}+v_{22}}{v}-\frac{v_1^2+v_2^2}{v^2}\right)+\frac{4}{(n-2k)^2}\frac{v_1^2+v_2^2}{v^2}-\frac{4}{(n-2k)^2}\frac{|\nabla v|^2}{v^2}\\
		=&-\frac{2}{n-2k}\frac{v_{11}+v_{22}}{v}-\frac{2(n-2k+2)}{(n-2k)^2}\frac{\sum^n_{i=3}v_i^2}{v^2}+\frac{2}{n-2k}\frac{|\nabla v|^2}{v^2}\\
		=&\frac{4}{v}\int_{\mr^n}\frac{\ud\mu(y)}{|x-y|^{n-2k+2}}+\frac{2(2k-2-n)}{v}\int_{\mr^n}\frac{|x_1-y_1|^2+|x_2-y_2|^2}{|x-y|^{n-2k+4}}\ud\mu(y)\\
		&-\frac{2(n-2k+2)}{(n-2k)^2}\frac{\sum^n_{i=3}v_i^2}{v^2}+\frac{2}{n-2k}\frac{|\nabla v|^2}{v^2}\\
		=&\frac{2(2k-n)}{v}\int_{\mr^n}\frac{\ud\mu(y)}{|x-y|^{n-2k+2}}+\frac{2}{n-2k}\frac{|\nabla v|^2}{v^2}\\
		&+\frac{2(n-2k+2)}{v}\int_{\mr^n}\frac{\sum^n_{i=3}|x_i-y_i|^2}{|x-y|^{n-2k+4}}\ud \mu(y)-\frac{2(n-2k+2)}{(n-2k)^2}\frac{\sum^n_{i=3}v_i^2}{v^2}.
	\end{align*}
	H\"older's  inequality, with the help of the equations \eqref{equ:integeral-for-v} and \eqref{v_i repre}, implies
	$$v_i^2\leq (n-2k)^2v\cdot\int_{\mr^n}\frac{|x_i-y_i|^2}{|x-y|^{n-2k+4}}\ud\mu(y).$$
	This inequality, together with the equation \eqref{Delta v repre}, shows that
	\begin{equation}\label{sec lower bound}
		e^{2u}K_g(\vec{e}_1,\vec{e}_2)\geq \frac{1}{k-1}\frac{\Delta v}{v}+\frac{2}{n-2k}\frac{|\nabla v|^2}{v^2}.
	\end{equation}
Compare this with the identity \eqref{R_g repre} to  achieve:
$$K_g(\vec{e}_1,\vec{e}_2)\geq\frac{2k-n}{4(n-1)(k-1)}R_g,$$ by observing that $ (v^{-\frac{4}{n-2k}}) = e^{-2u}$.

Thus, the proof is complete.
\end{proof}

\section{Volume ratio and $Q$-curvature}\label{sec:volume ratio}

Throughout this section, we aim to prove a key lemma concerning the connection between the volume ratio and the integral of $Q$-curvature. For brevity, we denote by $B_r^g(x_0)$ the geodesic ball of radius $r$ centered at $x_0$ with respect to the metric $g$, and by $V_g(\Omega)$ the volume of $\Omega$ with respect to $g$.

In the  previous work \cite{Li Adv}, the first  author introduced a volume entropy defined as
\[
\tau(g):=\limsup_{r\to\infty}\frac{\log V_g(B_r(0))}{\log |B_r(0)|}
\]
and gave its precise value when the conformal metric is complete and normal. Theorem 1.1 in \cite{Li Adv} showed that for a complete normal metric \eqref{normal solution} with finite total $Q$-curvature,
\[
\tau(g)=1-\alpha_0.
\]
Moreover, that result was used to characterize the Euclidean ball of radius $r$ with respect to two different metrics. In this section, we characterize the volume growth of the geodesic ball, which is a more intrinsic quantity.

Using the well-known Bishop-Gromov volume comparison theorem, the volume quotient
\begin{equation}\label{volume_quotient}
	\frac{V_g(B_r^g(x_0))}{|\mathbb{B}^n| r^n}
\end{equation}
is monotone non-increasing if the Ricci curvature of the manifold is non-negative. As a direct corollary, the limit of this quotient as $r\to\infty$ exists in this situation.  Without the sign of curvature, for a complete normal metric $g=e^{2u}|dx|^2$ on $\mathbb{R}^2$, Hartman \cite{Harman} and Shiohama \cite{Shiohama} showed that
\begin{equation}\label{Hartman-Shiohama}
	\lim_{r\to\infty}\frac{V_g(B_r^g(0))}{\pi r^2}=1-\frac{1}{2\pi}\int_{\mathbb{R}^2} K_g \, d\mu_g.
\end{equation}

Recently, S. Ma \cite{Ma}  considered $g=e^{2u}|dx|^2$ on $\mr^n$ with non-negative Ricci curvature  and used a refined singularity estimate for non-negative $n$-superharmonic functions to give a limit of volume quotient  \eqref{volume_quotient} related to  the asymptotic behavior of $u$  near infinity based on his joint  work with J. Qing \cite{Ma-Qing}.

Inspired by these results, we shall study the volume quotient \eqref{volume_quotient} for complete normal metrics. As a main result of this section, we state and prove the following theorem.

\begin{theorem}\label{lem:volume ratio and Q-curvature}
	For a smooth complete normal metric $g=e^{2u}|dx|^2$  on $\mr^n$ with $n\geq 2$,  there holds
	\begin{equation}\label{volume ratio limit}
		\liminf_{r\to\infty}\frac{V_g(B_r^g(0))}{|\mathbb{B}^n|r^n}\geq(1-\alpha_0)^{n-1}.
	\end{equation}
\end{theorem}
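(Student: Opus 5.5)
The plan is to compare geodesic balls of $g$ with Euclidean balls through radial paths. I will then use the near-radial symmetry from Lemma \ref{lem:|w|} and Lemma \ref{CQY lemma} to reduce the volume estimate to the radial metric $\bar g$. If $\alpha_0=1$ the right-hand side of \eqref{volume ratio limit} is $0$ and there is nothing to prove, so by Lemma \ref{lem:Cohn-Vossen} I assume $\alpha_0<1$.

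\emph{Step 1 (geodesic balls contain good rays).} For $\theta\in\ms^{n-1}$ set
$$F_\theta(t):=\int_0^t e^{u(s\theta)}\ud s,\qquad \bar F(t):=\int_0^t e^{\bar u(s)}\ud s.$$
Since $d_g(0,t\theta)\le F_\theta(t)$ and $F_\theta$ is increasing, the whole segment $\{t\theta:0\le t\le T\}$ lies in $B^g_r(0)$ whenever $F_\theta(T)<r$. Hence, with $\Theta_{T,r}:=\{\theta: F_\theta(T)<r\}$,
$$V_g(B_r^g(0))\ge \int_{\Theta_{T,r}}\int_0^T e^{nu(t\theta)}t^{n-1}\ud t\ud\sigma(\theta).$$
Given $\epsilon>0$ and large $r$, I choose $T=T(r)$ with $\bar F(T)=r/(1+\epsilon)$. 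This is possible because $\bar F\to\infty$, which follows from Lemma \ref{lem: bar g} (completeness of $\bar g$).

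\emph{Step 2 (most directions are good).} By Lemma \ref{lem:|w|}, $\fint_{\partial B_t}|w|\to0$, and by Lemma \ref{CQY lemma} with $k=1$, $\fint_{\partial B_t}e^{w}\to1$. Writing $|e^{w}-1|=(e^w-1-w)+|\,\cdots|$-type bounds (using $e^w-1-w\ge0$), these two facts give $\fint_{\partial B_t}|e^{w}-1|\ud\sigma\to0$. Then
$$\fint_{\ms^{n-1}}|F_\theta(T)-\bar F(T)|\ud\sigma(\theta)\le\int_0^T e^{\bar u}\fint|e^{w}-1|\ud\sigma\ud t=o(\bar F(T)),$$
because $\bar F(T)\to\infty$. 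By Chebyshev's inequality, the complement of $\Theta_{T,r}$ (where $F_\theta(T)\ge(1+\epsilon)\bar F(T)$) has measure $o(1)$. To show that discarding these bad directions costs only a negligible fraction of $\int_{B_T}e^{nu}\ud x$, I will use Hölder on each sphere: the bad-set contribution on $\partial B_t$ is at most $e^{n\bar u}\,|\text{bad}|^{1/2}(\int e^{2nw})^{1/2}$, and $\fint e^{2nw}$ is bounded by Lemma \ref{CQY lemma} with $k=2n$. Together with Lemma \ref{lem:e^kw}, this should yield
$$V_g(B_r^g(0))\ge(1-o(1))\,|\ms^{n-1}|\int_0^T e^{n\bar u(t)}t^{n-1}\ud t.$$
The main obstacle is here: the bad set depends on $T$ and not on $t$, so the $o(1)$ smallness must hold uniformly enough in $t\le T$ to integrate. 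I would handle this by splitting the range into $t\le\delta T$ and $t\ge\delta T$, using that the radial integrand $e^{n\bar u}t^{n-1}$ grows polynomially, so the range $t\le\delta T$ contributes only a small fraction of the total.

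\emph{Step 3 (radial asymptotics).} Lemma \ref{lem:ru'} gives $t\bar u'(t)\to-\alpha_0$, so $te^{\bar u(t)}$ is regularly varying with index $1-\alpha_0>0$. Applying L'Hôpital's rule gives
$$\bar F(T)\sim\frac{Te^{\bar u(T)}}{1-\alpha_0},\qquad \int_0^T e^{n\bar u}t^{n-1}\ud t\sim\frac{(Te^{\bar u(T)})^n}{n(1-\alpha_0)}.$$
Combining these,
$$|\ms^{n-1}|\int_0^Te^{n\bar u}t^{n-1}\ud t\sim|\mathbb{B}^n|(1-\alpha_0)^{n-1}\bar F(T)^n=|\mathbb{B}^n|(1-\alpha_0)^{n-1}\frac{r^n}{(1+\epsilon)^n}.$$
Dividing by $|\mathbb{B}^n|r^n$, taking $\liminf_{r\to\infty}$, and then letting $\epsilon\to0$ yields \eqref{volume ratio limit}.
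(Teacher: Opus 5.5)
Your proof is correct, and at the crucial step it takes a different route from the paper. Much of the scaffolding is shared. Your $F_\theta,\bar F$ are the paper's $l,\bar l$, and Step 1 is the inclusion $\{l<s\}\subset B^g_s(0)$. The $L^1$ bound on $|e^w-1|$ over spheres followed by Chebyshev plays the role of the paper's estimate of $\int|l-\bar l|\,\ud\mu_{\bar g}$, and Step 3 is Lemma \ref{lem: Vbarg}.

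The difference is how the good set is converted into $g$-volume. The paper discards bad \emph{points} and measures what remains in $\bar g$-volume. It must then compare $V_{\bar g}$ and $V_g$ on the geodesic ball $B^g_{(1+\delta)r}(0)$ (Lemma \ref{lem:V_g Omega_r vs V_ bar g}). That comparison needs the enclosing Euclidean ball to have comparable volume, $\int_{B_{|e_r|}(0)}e^{nu}\ud x\le C\,V_g(B^g_r(0))$. This is where the strong $A_\infty$ property of $e^{nu}$ enters (Lemma \ref{lem:strong A_infty}, imported from Bonk--Heinonen--Saksman and Wang, with the odd-dimensional case covered by a remark).

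You discard bad \emph{directions} instead, so the good region is a cone inside the round ball $B_T(0)$. Jensen on each sphere gives $V_g(B_T(0))\ge V_{\bar g}(B_T(0))$. The discarded cone is controlled directly in $g$-measure by Cauchy--Schwarz together with Lemma \ref{CQY lemma} at $k=2n$.

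What each route buys: your argument is self-contained modulo the spherical-average estimates of Section 2. The paper's route additionally gives the two-sided growth $V_g(B^g_r(0))\asymp r^n$ and $V_g/V_{\bar g}\to1$ on geodesic balls, which this inequality does not need.

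Two small points about your Step 2:
\begin{itemize}
\item The obstacle you flag is not real. For fixed $T$ the bad set $\ms^{n-1}\setminus\Theta_{T,r}$ does not depend on $t$. Also $\sup_{t>0}\fint_{\partial B_t(0)}e^{2nw}\ud\sigma<\infty$, by continuity for bounded $t$ and Lemma \ref{CQY lemma} for large $t$. So the loss is at most $C\,|\ms^{n-1}\setminus\Theta_{T,r}|^{1/2}\int_0^T e^{n\bar u}t^{n-1}\ud t$ directly, with no splitting at $\delta T$.
\item Lemma \ref{lem:e^kw} is superfluous there, since Jensen already gives the lower bound on the full ball.
\end{itemize}
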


\begin{remark}
We tend to believe that the inequality in the above statement is indeed an equality just  like the identity \eqref{Hartman-Shiohama}.  We will discuss this after we provide the proof of Theorem \ref{thm:main theorem} in Section \ref{sec:proof of main theorem}.
\end{remark}

The proof  of Theorem \ref{lem:volume ratio and Q-curvature} should not be provided until the end of this section.

Motivated by S. Ma's Theorem 3.1 in \cite{Ma}, and our estimate on the spherical average Lemma \ref{lem: bar g}, we first treat a radially symmetric version of Lemma \ref{lem:volume ratio and Q-curvature}.

\begin{lemma}\label{lem: Vbarg}
For  a smooth complete, conformal normal metric $g=e^{2u}|dx|^2$ on $\mr^n$,  let us consider its radially symmetric metric $\bar g=e^{2\bar u}|dx|^2$. For this rotationally symmetric metric, there holds
	\[\lim_{r\to\infty}\frac{V_{\bar g}(B_r^{\bar g}(0))}{|\mathbb{B}^n|r^{n}}= (1-\alpha_0)^{n-1}.\]
\end{lemma}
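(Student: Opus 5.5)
Since $\bar g=e^{2\bar u(r)}|dx|^2$ is rotationally symmetric, geodesic balls centered at the origin are Euclidean balls. The plan is to introduce the radial distance
$$\rho(r):=d_{\bar g}(0,r\theta)=\int_0^r e^{\bar u(t)}\,\ud t,$$
which is strictly increasing and tends to $\infty$ by Lemma \ref{lem: bar g} (completeness of $\bar g$). Then $B^{\bar g}_{\rho(r)}(0)=B_r(0)$, and
$$V_{\bar g}(B^{\bar g}_{\rho(r)}(0))=|\mathbb{S}^{n-1}|\int_0^r e^{n\bar u(t)}t^{n-1}\,\ud t.$$
Since $\rho$ is a bijection onto $[0,\infty)$, it suffices to compute
$$\lim_{r\to\infty}\frac{|\mathbb{S}^{n-1}|\int_0^r e^{n\bar u(t)}t^{n-1}\ud t}{|\mathbb{B}^n|\,\rho(r)^n}=\lim_{r\to\infty}\frac{n\int_0^r e^{n\bar u}t^{n-1}\ud t}{\rho(r)^n}.$$

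The main tool is the first-order asymptotics $r\bar u'(r)\to-\alpha_0$ from Lemma \ref{lem:ru'}, together with $\alpha_0\le 1$ from Lemma \ref{lem:Cohn-Vossen}. First assume $\alpha_0<1$. Both numerator and denominator then diverge: $e^{\bar u}\ge r^{-\alpha_0-\epsilon}$ eventually, by Lemma \ref{lem:bar u}. Apply L'H\^opital's rule once to get
$$\frac{n e^{n\bar u(r)}r^{n-1}}{n\rho(r)^{n-1}e^{\bar u(r)}}=\left(\frac{re^{\bar u(r)}}{\rho(r)}\right)^{n-1}.$$
Next compute $\lim re^{\bar u(r)}/\rho(r)$, again by L'H\^opital: its derivative quotient is $(e^{\bar u}+r\bar u'e^{\bar u})/e^{\bar u}=1+r\bar u'(r)\to 1-\alpha_0$. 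This is legitimate because $re^{\bar u}\to\infty$, which follows from $\bar u=(-\alpha_0+o(1))\log r$ with $\alpha_0<1$. Hence the ratio tends to $(1-\alpha_0)^{n-1}$. Strictly speaking, the form of L'H\^opital needed only requires the denominator to go to infinity, so the computation goes through cleanly.

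The delicate case is $\alpha_0=1$, where the target limit is $0$. Here I would avoid two-sided L'H\^opital and use a direct estimate. Given $\epsilon>0$, for $t\ge T_\epsilon$ we have $t\bar u'(t)\le -1+\epsilon$, so $(te^{\bar u})'\le \epsilon e^{\bar u}$. Integrating gives
$$re^{\bar u(r)}\le C_\epsilon+\epsilon\rho(r).$$
Since $\rho(r)\to\infty$, this yields $\limsup re^{\bar u}/\rho\le\epsilon$. Similarly, $n\int_0^r e^{n\bar u}t^{n-1}\ud t=n\int_0^r (te^{\bar u})^{n-1}\rho'(t)\,\ud t\le C+(\epsilon\rho+C_\epsilon)^{n-1}\cdot n\rho$, so dividing by $\rho^n$ the ratio is $O(\epsilon^{n-1})+o(1)$. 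The same integration-by-parts comparison, $(te^{\bar u})'=(1+t\bar u')e^{\bar u}$ giving $re^{\bar u}=(1-\alpha_0+o(1))\rho(r)$, also gives a unified proof for all $\alpha_0\le1$: the numerator is $n\int_0^r (te^{\bar u})^{n-1}\rho'\,\ud t=n\int_0^r((1-\alpha_0)^{n-1}+o(1))\rho^{n-1}\rho'\,\ud t$, hence $((1-\alpha_0)^{n-1}+o(1))\rho(r)^n$. I will present the argument in this unified form, since it treats the borderline case without separate L'H\^opital hypotheses. The only real obstacle is checking that the $o(1)$ errors are absorbed, which holds because $\rho\to\infty$ and the early-time contributions are bounded constants.
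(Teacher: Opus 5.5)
Your proposal is correct and follows the paper's proof. Like the paper, you reparametrize by the radial $\bar g$-distance $\rho(r)=\int_0^r e^{\bar u(t)}\,\ud t$, which diverges by completeness of $\bar g$ (Lemma \ref{lem: bar g}), apply L'H\^opital twice, and conclude with $r\bar u'(r)\to-\alpha_0$ from Lemma \ref{lem:ru'}. As you note yourself, L'H\^opital only needs $\rho(r)\to\infty$, so the paper handles all $\alpha_0\le 1$ in one pass; your separate $\alpha_0=1$ estimate and the unified integration version (which is just L'H\^opital written out) are valid but not needed.
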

\begin{proof}
	Let us first consider a new radial function $f$ defined by
	$$f(r):=\int_0^re^{\bar u(s)}\ud s.$$
Our Lemma \ref{lem: bar g} shows that  $f(r)\to \infty$ as $r\to\infty$.
Therefore, we can calculate the volume of ball $B_{f(r)}^{\bar g}(0)$ by the formula:
$$V_{\bar g}(B_{f(r)}^{\bar g}(0))=\int^r_0|\mathbb{S}^{n-1}|s^{n-1}e^{n\bar u(s)}\ud s.$$
	Applying  L'H\^opital's rule and the fact that $f'(r)=e^{\bar u(r)}$, we reach at:
$$
	\lim_{r\to\infty}	\frac{V_{\bar g}(B_{f(r)}^{\bar g}(0))}{|\mathbb{B}^n|(f(r))^n}=\lim_{r\to\infty}\frac{|\mathbb{S}^{n-1}|r^{n-1}e^{n\bar u(r)}}{|\mathbb{B}^n|n(f(r))^{n-1}e^{\bar u(r)}}
	=\lim_{r\to\infty}\left(\frac{re^{\bar u(r)}}{f(r)}\right)^{n-1}
$$
Then, the L'H\^opital's rule once again can be used to value the limit:
	\[\lim_{r\to\infty}\frac{re^{\bar u(r)}}{f(r)}=\lim_{r\to\infty}\frac{e^{\bar u(r)}+re^{\bar u(r)}\bar u'(r)}{e^{\bar u(r)}}=\lim_{r\to\infty}(1+r\cdot\bar u'(r))=1-\alpha_0\]
	where the last equality comes from Lemma \ref{lem:ru'}.
Thus, we finish the proof.	
\end{proof}

Thus the key ingredient in proof of our theorem \ref{lem:volume ratio and Q-curvature} is to compare the geodesic ball volume growth as well as the geodesic distance of the two conformal metrics $g$ and $\bar g$ under the condition $\alpha_0 < 1$ and to see if they are near each other.  To this end, the main tool is the so-called strong $A_\infty$ weight, introduced by David and Semmes \cite{DS}.  For readers' convenience, we provide some background materials about it. Again let us consider a smooth conformal metric $g=e^{2u}|dx|^2$ on $\mr^n$. The conformal  volume factor $e^{nu}$ is said to be an  $A_\infty$ weight if, for all  Euclidean balls  $B_r(x_0)$, there holds
$$\fint_{B_r(x_0)}e^{nu}\ud x \leq C\exp\left(\fint_{B_r(x_0)}nu\ud x\right)$$
where $C$ is a positive constant independent of $r$ and $x_0$.
An important property of $A_\infty$ weight is concerning the comparison of geodesic distance $d_g(x,y)$ and the measure distance $\delta_g(x,y)$:
\begin{equation}\label{A_infty weight property}
	d_g(x,y)\leq C\delta_g(x,y)
\end{equation}
where $C$ is independent of $x,y$ and the measure distance  $\delta_g(x,y)$ is given by
$$\delta_g(x,y):=\left(\int_{B_{\frac{|x-y|}{2}}(\frac{x+y}{2})}e^{nu(z)}\ud z\right)^{\frac{1}{n}}.$$
An $A_\infty$ weight is called strong $A_\infty$ weight if the reversed version of the estimate \eqref{A_infty weight property} also holds true, that is, 
\begin{equation}\label{strong A_infty weight property}
	\delta_g(x,y)\leq Cd_g(x,y).
\end{equation}

A breakthrough concerning the geometric condition under which $e^{nu}$
is a strong $A_\infty$ weight is due to \cite{BHS}, who established the result under the assumption that the $L^1$-norm of the $n$-th order 
$Q$-curvature is sufficiently small. This was later extended by Wang \cite{Wang IMRN} to the optimal case. For the readers' convenience, we state this as a lemma.
\begin{lemma}\label{lem:strong A_infty} (Theorem 1.3 in \cite{BHS}, Corollary 1.7 in \cite{Wang IMRN})
	Let  $g=e^{2u}|\ud x|^2$ be  a smooth complete normal metric on $\mr^n$ with $n\geq 2$ and  $\alpha_0<1$.  Then, $e^{nu}$ is a strong $A_\infty$ weight.
\end{lemma}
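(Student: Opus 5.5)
The statement is essentially a citation, so the plan is to reduce it to the hypotheses of Theorem 1.3 in \cite{BHS} and Corollary 1.7 in \cite{Wang IMRN}, and to indicate the mechanism behind those results in our setting. By Lemma \ref{lem:Cohn-Vossen} we already know $\alpha_0\leq 1$, so the assumption $\alpha_0<1$ excludes only the borderline case. The normal representation \eqref{normal solution} writes $u$ as a logarithmic potential of the finite signed measure $\nu$ plus a constant. This is exactly the class of weights treated in \cite{BHS, Wang IMRN}: $e^{nu}$ with $u=\log$-potential of a finite measure. So the first step is to check that our normalisation of $\nu$ matches theirs. The Green's function identity \eqref{Green function} fixes the constant $\frac{2}{(n-1)!|\mathbb{S}^n|}$, and the total mass condition there is stated in terms of $\nu(\mr^n)=\alpha_0<1$.

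If one wants to see the argument rather than quote it, I would proceed in two stages. First, the $A_\infty$ property. Fix a ball $B=B_r(x_0)$ and split $\nu=\nu|_{B_{2r}(x_0)}+\nu|_{\mr^n\setminus B_{2r}(x_0)}$.
\begin{itemize}
\item For the far part, $\log\frac{|y|}{|x-y|}$ varies by a bounded amount as $x$ ranges over $B$, so $e^{nu}$ restricted to the far potential is comparable to its geometric mean on $B$.
\item For the near part, write $n\log\frac{1}{|x-y|}$ against the probability measure $|\nu|\,|_{B_{2r}}/\|\nu\|$ and apply Jensen's inequality, exactly as in the proof of Lemma \ref{CQY lemma}. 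This reduces the estimate to integrability of $|x-y|^{-n\|\nu^+\|}$ over $B$, which holds as long as the local positive mass is below $1$.
\end{itemize}
The issue is that only the total mass is below $1$. So one uses the decay of $|\nu|$ at infinity together with local smoothness of $u$ on compact sets: on small balls the mass is small, and on huge balls the geometry is governed by the total mass $\alpha_0<1$. A compactness argument over the scale-location parameter handles the intermediate regime.

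Second, and this is the main obstacle, the reverse inequality \eqref{strong A_infty weight property}, $\delta_g\leq C d_g$. The plan here is to follow \cite{BHS}. For a curve $\gamma$ joining $x,y$, bound $\int_\gamma e^{u}\,ds$ from below by $\delta_g(x,y)$. Do this by averaging $u$ over the segment versus the ball $B_{|x-y|/2}(\frac{x+y}{2})$ with Jensen, and controlling the deviation $u-\fint u$ along curves by the potential of $\nu^-$ (harmless, it only raises $u$) and of $\nu^+$. The positive part can create cusps of $e^{u}$ along a curve, and the optimal threshold $\alpha_0<1$ (rather than small mass) is precisely where Wang's refinement is needed. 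I would invoke Corollary 1.7 of \cite{Wang IMRN} for this step rather than reprove it, noting that its hypotheses (finite total $Q$-curvature, normality, and total normalized curvature $<1$) are exactly ours. Combining both stages gives that $e^{nu}$ is a strong $A_\infty$ weight.
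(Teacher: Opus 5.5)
Your route is the same as the paper's. The lemma is not proved there but quoted from Theorem 1.3 of Bonk--Heinonen--Saksman (small total variation) and Corollary 1.7 of Wang's IMRN paper (the optimal threshold $\alpha_0<1$), and you also defer the decisive step to Wang. The one inaccurate point in your reduction is the claim that Wang's hypotheses are ``exactly ours''. Wang's theorem is proved for normal metrics on $\mr^4$, and her Corollary 1.7 only records that the method carries over to even dimensions, where $Q^{(n)}_g$ comes from the local operator $(-\Delta)^{n/2}$. The lemma is stated for every $n\geq 2$, and for odd $n$ (where $Q^{(n)}_g$ involves the nonlocal $(-\Delta)^{1/2}$) the citation does not literally apply. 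The paper closes this in the remark after the lemma. Under normality \eqref{normal solution}, $u$ is just the logarithmic potential of the finite signed measure $\nu$ with $\nu(\mr^n)=\alpha_0$, so neither the parity of $n$ nor the definition of $Q^{(n)}_g$enters. Moreover, the quasiconformal flow of Bonk--Heinonen--Saksman (their Theorem 1.2), on which Wang's argument rests, is available in every dimension $n\geq 2$. You state the first half of this observation but not the second, and without it your citation covers only even $n$.

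Your heuristic sketch should not be read as an argument, because two of its steps are off. In the $A_\infty$ stage, the Jensen bound controls $\fint_{B}e^{nu}\,\ud x$ only when the local \emph{positive} mass $\nu^+(B_{2r}(x_0))$ stays below $1$. On large balls containing the bulk of $\nu$, this quantity tends to $\alpha_0^+$, which may be $\geq 1$ even though $\alpha_0<1$. That regime is non-compact, so neither Jensen nor compactness ever uses the hypothesis $\alpha_0<1$. What is needed there is cancellation between $\nu^+$ and $\nu^-$ at large scales, and that is where the optimal-threshold argument of Wang is required. In the second stage, the contribution of $\nu^-$ to $u$ is $\int_{\mr^n}\log\frac{|x-y|}{|y|}\,\ud\nu^-(y)$. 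This is subharmonic in $x$ and \emph{lowers} $u$ below its ball averages near where $\nu^-$ concentrates, so ``it only raises $u$'' has the sign backwards.
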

\begin{remark}
Wang proved this in the four-dimensional case in \cite{Wang IMRN} and remarked that the method also works for all even-dimensional cases (see Corollary 1.7 in \cite{Wang IMRN}). In fact, under the normal metric assumption, the odd-dimensional case is also valid, since the quasiconformal flow method developed in \cite{BHS} applies for every 
$n\geq 2$ (see Theorem 1.2 therein).
\end{remark}

We observe that the property of strong $A_\infty$ weight can be used to control the volume growth of geodesic balls.
\begin{lemma}\label{lem: V_barg and V_g growth}
Let  $g=e^{2u}|\ud x|^2$ be  a smooth complete  normal metric on $\mr^n$ with $n\geq 2$ with the property that $\alpha_0<1$.  Then for each  $r>0$, the following statements  hold:
\begin{equation}\label{V_g}
	C^{-1}r^n\leq V_g(B_r^g(0))\leq Cr^n,
\end{equation}
and 
\begin{equation}\label{V_bar g}
	C^{-1}r^n\leq V_{\bar g}(B_r^{\bar g}(0))\leq Cr^n.
\end{equation}
Meanwhile, for $r>0$ and any $z_r\in\partial B_r^g(0)$, one has
\begin{equation}\label{Vz_r}
	C^{-1}r^n\leq \int_{B_{|z_r|}(0)}e^{nu}\ud x\leq Cr^n.
\end{equation}
and 
\begin{equation}\label{d_bar g equiv r}
	C^{-1}r\leq d_{\bar g}(0,z_r)\leq Cr.
\end{equation}
\end{lemma}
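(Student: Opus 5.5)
The plan is to reduce all four estimates to the strong $A_\infty$ property of Lemma \ref{lem:strong A_infty}, applied to the two weights $e^{nu}$ and $e^{n\bar u}$. The second weight is admissible because, by Lemma \ref{lem: bar g}, $\bar g$ is again complete and normal with the same $\alpha_0<1$. We fix the origin as base point and write $V_g(r):=\int_{B_r(0)}e^{nu}\ud x$ for Euclidean balls.

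The first step is a two-sided comparison between $d_g(0,x)$ and $V_g(|x|)^{1/n}$, valid for every $x$. Combining \eqref{A_infty weight property} and \eqref{strong A_infty weight property} gives $d_g(0,x)\approx \delta_g(0,x)$, where $\delta_g(0,x)^n$ is the $e^{nu}$-measure of the ball $B_{|x|/2}(x/2)$. This ball is contained in $B_{|x|}(0)$, which gives $\delta_g(0,x)^n\le V_g(|x|)$. For the reverse direction we use the doubling property of $A_\infty$ weights: $B_{|x|}(0)\subset B_{2|x|}(x/2)$, and that ball is a fixed dilate of $B_{|x|/2}(x/2)$, so $V_g(|x|)\le C\,\delta_g(0,x)^n$. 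We conclude that
\begin{equation*}
C^{-1}V_g(|x|)^{1/n}\le d_g(0,x)\le C\,V_g(|x|)^{1/n},
\end{equation*}
and the same two-sided bound holds for $\bar g$ with $V_{\bar g}$.

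The second step gives \eqref{Vz_r} immediately: take $x=z_r$ and use $d_g(0,z_r)=r$.

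The third step gives \eqref{V_g}. Since $V_g(s)$ is continuous and increasing in $s$, tending to $\infty$ by \eqref{infinte V_bar g} and Lemma \ref{lem:e^kw}, the comparison above yields Euclidean radii $s_1\le s_2$ with $V_g(s_1)\approx r^n\approx V_g(s_2)$ such that $B_{s_1}(0)\subset B^g_r(0)\subset B_{s_2}(0)$. Concretely, $s_2$ is chosen so that $d_g(0,x)>r$ whenever $|x|\ge s_2$, and $s_1$ so that $d_g(0,x)<r$ whenever $|x|\le s_1$. Taking $e^{nu}$-measures then gives $C^{-1}r^n\le V_g(B^g_r(0))\le Cr^n$. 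Running the same argument for $\bar g$ gives \eqref{V_bar g}.

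The fourth step is \eqref{d_bar g equiv r}. Applying the $\bar g$ comparison at $x=z_r$ gives $d_{\bar g}(0,z_r)\approx V_{\bar g}(|z_r|)^{1/n}$. The point is that $V_{\bar g}(s)\approx V_g(s)$ uniformly in $s$. For large $s$ this follows from Lemma \ref{lem:e^kw} with $k=n$, since the ratio tends to $1$. For $s$ in a compact range both quantities are comparable to $s^n$ by smoothness, which is enough because both sides are positive and continuous in $s$. Combining this with \eqref{Vz_r} yields $d_{\bar g}(0,z_r)\approx r$.

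I expect the main obstacle to be the uniformity of the constants, particularly near $r\to0$ and in the Euclidean-ball comparison. Near the origin, local smoothness gives $d_g\approx |x|$ and $V_g(s)\approx s^n$, which is consistent with all four statements, so I would simply treat small $r$ separately. The other point to check carefully is that the doubling constant of the $A_\infty$ weight really controls enlarging $B_{|x|/2}(x/2)$ to $B_{2|x|}(x/2)$ with a constant independent of $x$.
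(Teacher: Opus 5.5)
Your proposal is correct and follows essentially the same route as the paper. Both arguments use strong $A_\infty$ for $e^{nu}$ and for $e^{n\bar u}$ (the latter via Lemma \ref{lem: bar g}), doubling with the inclusions $B_{|x|/2}(x/2)\subset B_{|x|}(0)\subset B_{2|x|}(x/2)$, a sandwich of $B_r^g(0)$ between Euclidean balls, and Lemma \ref{lem:e^kw} with $k=n$ for \eqref{d_bar g equiv r}. The one difference is cosmetic: the paper gets $\int_{B_s(0)}e^{n\bar u}\,\ud x\le\int_{B_s(0)}e^{nu}\,\ud x$ for all $s$ directly from Jensen, so only the lower bound needs Lemma \ref{lem:e^kw}, and your compactness argument for small $s$ supplies a step the paper leaves implicit.
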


\begin{proof}
In fact, the first estimate \eqref{V_g} has been established by Y. Sire and Y. Wang, see Proposition 4.1 in \cite{Sire-Wang}.  For readers' convenience, we will sketch the proof again.
	
Under  the assumption $\alpha_0<1$, Lemma \ref{lem:strong A_infty} shows that $e^{nu}$ is a strong $A_\infty$ weight which yields that  the geodesic  distance and measure distance are equivalent in the sense that
	 \begin{equation}\label{strong_A_infty weight}
	 	C^{-1}\delta_g(x,y)\leq d_g(x,y)\leq C\delta_g(x,y).
	 \end{equation}
Due to $B_{|z_r|}(0)\subset B_{2|z_r|}(\frac{z_r}{2})$,  by the doubling property of strong $A_\infty$ weight (See \cite{Semmes}) and the equivalence between two distances \eqref{strong_A_infty weight}, there holds
\begin{align*}
	\int_{B_{|z_r|}(0)}e^{nu}\ud x\leq& \int_{ B_{2|z_r|}(\frac{z_r}{2})}e^{nu}\ud x\\
	\leq &C\int_{ B_{\frac{|z_r|}{2}}(\frac{z_r}{2})}e^{nu}\ud x\\
	=&C\delta_g(0,z_r)^n\\
	\leq &Cd_g(0,z_r)^n=Cr^n.
\end{align*}

Meanwhile,  using the estimate \eqref{strong_A_infty weight}, it is not hard to see that
$$\int_{B_{|z_r|}(0)}e^{nu}\ud x\geq \int_{ B_{\frac{|z_r|}{2}}(\frac{z_r}{2})}e^{nu}\ud x=\delta_g(0,z_r)^n\geq C^{-1}d_g(0,z_r)^n=C^{-1}r^n.$$
Thus, we show that the desired estimate \eqref{Vz_r} holds true. 

With the estimate \eqref{Vz_r} at hand, the estimate \eqref{V_g} can be easily obtained by choosing $z_r^{(1)}, z_r^{(2)}\in \partial B_r^g(0)$ such that $B_{|z_r^{(1)}|}(0)\subset B_r^g(0)\subset B_{|z_r^{(2)}|}(0)$.

With the help of Lemma \ref{lem: bar g}, the same argument also works for $\bar g=e^{2\bar u}|\ud x|^2$,  and therefore the estimate \eqref{V_bar g} holds.

Finally, we are going to deal with the estimate \eqref{d_bar g equiv r}.
One one hand, there holds
\begin{align*}
	d_{\bar g}(0,z_r)^n\leq &C\delta_{\bar g}(0,z_r)^n\\
	\leq &C\int_{B_{|z_r|}(0)}e^{n\bar u}\ud x\\
	\leq &C\int_{B_{|z_r|}(0)}e^{n u}\ud x\leq Cr^n.
\end{align*}

On the other hand, using the doubling property of the strong $A_\infty$ weight and choosing $k=n$ in Lemma~\ref{lem:e^kw}, we have
\begin{align*}
	d_{\bar g}(0,z_r)^n\geq &C\delta_{\bar g}(0,z_r)^n\\
	\geq &C\int_{B_{2|z_r|}(\frac{z_r}{2})}e^{n\bar u}\ud x\\
	\geq &C\int_{B_{|z_r|}(0)}e^{n \bar u}\ud x\\
	\geq &C\int_{B_{|z_r|}(0)}e^{nu}\ud x\geq Cr^n.
\end{align*}
Hence our estimate \eqref{d_bar g equiv r} follows by combining these two estimates. Therefore our proof is complete.
\end{proof}

In fact, the distance function $d_g$ of metric $g$ is not suitable for radially symmetric computations. Therefore, we introduce the following two quantities. For polar coordinates $x = r\theta$, define
\[
l(x) := \int_0^r e^{u(t\theta)} \, dt, \qquad \bar l(x) := \int_0^r e^{\bar u(t)} \, dt.
\]
Since $\bar g=e^{2\bar u}|dx|^2$ is radially symmetric, $\bar l(x)$  indeed represents  the geodesic distance from $x$ and the origin.

The following lemma shows that the two distance-type functions $l(x)$ and $\bar l(x)$ are almost the same in the volume measure.
\begin{lemma}\label{lem: |l(x)-bar l(x)|}
Consider a complete normal metric $g=e^{2u}|\ud x|^2$  on $\mr^n$ with  $n\geq 2$ and   its radially symmetric counterpart  $\bar g=e^{2\bar u}|dx|^2$. Then, there holds
	$$\lim_{r\to\infty}\frac{\int_{B_r^{\bar g}(0)}|l(x)-\bar l(x)|\ud\mu_{\bar g}}{r\cdot V_{\bar{g}}(B_r^{\bar g}(0))}=0.$$
\end{lemma}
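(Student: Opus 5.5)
Since $\bar g$ is radially symmetric, the geodesic ball $B_r^{\bar g}(0)$ is the Euclidean ball $B_{\rho}(0)$ with $\bar l(\rho)=r$, i.e.\ $\rho=f^{-1}(r)$ with $f(s)=\int_0^s e^{\bar u(t)}\ud t$ as in Lemma \ref{lem: Vbarg}. I will work in polar coordinates throughout. For $x=s\theta$ with $s\le \rho$,
$$|l(x)-\bar l(x)|\le \int_0^{s}e^{\bar u(t)}\,|e^{w(t\theta)}-1|\ud t,$$
where $w=u-\bar u$. Integrating over $B_\rho(0)$ against $\ud\mu_{\bar g}=e^{n\bar u(s)}s^{n-1}\ud s\ud\theta$ and using Fubini in $\theta$ gives the numerator bound
$$\int_0^\rho e^{n\bar u(s)}s^{n-1}\int_0^s e^{\bar u(t)}\,h(t)\ud t\ud s,\qquad h(t):=\int_{\ms^{n-1}}|e^{w(t\theta)}-1|\ud\sigma(\theta).$$

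The key input is that $h(t)\to 0$ as $t\to\infty$, which I will derive from Lemma \ref{CQY lemma}. By Cauchy--Schwarz,
$$\fint_{\partial B_t}|e^{w}-1|\ud\sigma\le\Big(\fint e^{2w}-2\fint e^{w}+1\Big)^{1/2}.$$
Lemma \ref{CQY lemma} with $k=2$ and $k=1$ gives $\fint e^{2w}\to1$ and $\fint e^{w}\to1$, because $\fint e^{ku}=e^{k\bar u+o(1)}$ means exactly $\fint e^{kw}=e^{o(1)}$. Hence the right-hand side tends to $0$. Note this is a purely spherical-average statement; Lemma \ref{lem:|w|} alone controls $|w|$ but not $|e^w-1|$, which is why the exponential version is needed.

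Given $\epsilon>0$, choose $T$ with $h(t)\le\epsilon$ for $t\ge T$. Then
$$\int_0^s e^{\bar u}h\ud t\le C_T+\epsilon f(s)\le C_T+\epsilon r\qquad\text{for } s\le\rho,$$
using $f(s)\le f(\rho)=r$. Consequently the numerator is at most $(C_T+\epsilon r)\,V_{\bar g}(B_r^{\bar g}(0))/|\ms^{n-1}|$, up to normalization of the sphere measure. Dividing by $r\,V_{\bar g}(B_r^{\bar g}(0))$ gives a quotient at most $C\epsilon+C_T/r$. Letting $r\to\infty$ and then $\epsilon\to0$ proves the claim. This uses only that $r\to\infty$ along geodesic balls, which follows from the completeness of $\bar g$ in Lemma \ref{lem: bar g}. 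It does not even require $\alpha_0<1$, since the bound is uniform.

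The only real obstacle is the pointwise-in-$\theta$ versus averaged issue. I handle it by bounding $|l-\bar l|$ with the ray integral of $|e^{w}-1|$ and exchanging the order of integration, so that only spherical averages of $|e^{w}-1|$ ever appear, and these are controlled by Lemma \ref{CQY lemma}. Measurability and the Fubini exchange are routine because $u$ is smooth.
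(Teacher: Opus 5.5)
Your proof is correct and follows the paper's argument. You bound $|l-\bar l|$ by the ray integral of $e^{\bar u}|e^{w}-1|$, use Fubini to reduce everything to $h(t)=\int_{\partial B_1(0)}|e^{w(t\theta)}-1|\,\ud\sigma(\theta)\to 0$, and conclude. The differences are minor: you get $h(t)\to0$ from Lemma \ref{CQY lemma} with $k=1,2$ and Cauchy--Schwarz instead of from $|e^w-1|\le e^w-1-w+|w|$ with Lemma \ref{lem:|w|}, and you finish with a direct $\epsilon$--$T$ splitting (using $f(s)\le f(\rho)=r$) rather than applying L'H\^opital's rule twice, which is slightly cleaner.
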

\begin{proof}

For any $r>0$, choose $y_r\in \mr^n$ such that $B_{|y_r|}(0)=B_r^{\bar g}(0).$ Since $\bar g$ is radially symmetric, it is easy to obtain that $\bar l(y_r)=r.$

The elementary inequality $e^t\geq 1+t$ can be applied to get the following estimate, noticing that $ w = u - \bar u$,
$$|e^{w}-1|\leq |e^w-1-w|+|w|=e^{w}-1-w+|w|.$$
This estimate, with Lemmas \ref{CQY lemma} and \ref{lem:|w|},  shows that
\begin{equation}\label{|e^w-1| to 0}
	\lim_{t\to\infty}\int_{\partial B_1(0)}|e^{w(t\theta)}-1|\ud\sigma(\theta)=0.
\end{equation}
	With the help of Fubini's theorem, there holds
\begin{align*}
	& J(r) := \int_{B_r^{\bar g}(0)}|l(x)-\bar l(x)|\ud\mu_{\bar g}\\
	=&\int^{|y_r|}_0s^{n-1}e^{n\bar u(s)}\int_{\partial B_1(0)}|l(s\theta)-\bar l(s\theta)|\ud\sigma(\theta)\ud s\\
	\leq &\int^{|y_r|}_0s^{n-1}e^{n\bar u(s)}\int_{\partial B_1(0)}\int^s_0|e^{w(t\theta)}-1|e^{\bar u(t)}\ud t\ud\sigma(\theta)\ud s\\
	=&\int^{|y_r|}_0s^{n-1}e^{n\bar u(s)}\int^s_0e^{\bar u(t)}\int_{\partial B_1(0)}|e^{w(t\theta)}-1|\ud\sigma(\theta)\ud t\ud s.
	\end{align*}
	and 
	$$ r \cdot V_{\bar{g}} (B_r^{\bar g}(0)) = \int^{|y_r|}_0e^{\bar u(s)}\ud s\int^{|y_r|}_0 s^{n-1} e^{n\bar u(s)} |\mathbb{S}^{n-1}| \ud s. $$
Then, apply L'Hospital's rule twice to get:
\begin{align*}
& 0 \leq \lim_{r\to\infty}\frac{\int_{B_r^{\bar g}(0)}|l(x)-\bar l(x)|\ud\mu_{\bar g}}{r\cdot V_{\bar{g}}(B_r^{\bar g}(0))}\\
\leq & \lim_{|y_r| \to\infty} \frac{ |y_r|^{n-1}e^{n\bar u(|y_r|)}\int_0^{|y_r|} e^{{\bar u}(t)}\int_{\partial B_1(0)}|e^{w(t\theta)}-1|\ud\sigma(\theta) \ud t} { e^{{\bar u}(|y_r|)} \int_0^{|y_r|} t^{n-1} e^{n {\bar u}(t)} dt |\mathbb{S}^{n-1}| + |\mathbb{S}^{n-1}|  |y_r|^{n-1}e^{n\bar u(|y_r|)}\int_0^{|y_r|} e^{{\bar u}(t)} dt}\\
\leq & \lim_{|y_r| \to\infty}   \frac{ \int_0^{|y_r|}  e^{{\bar u}(t)} \int_{\partial B_1(0)} |e^{w(t\theta)}-1|\ud\sigma(\theta) \ud t}  { |\mathbb{S}^{n-1}| \int_0^{|y_r|} e^{{\bar u}(t)} dt}\\
= &  \frac{1}{|\mathbb{S}^{n-1}|}\lim_{|y_r| \to\infty}  \int_{\partial B_1(0)}|e^{w(|y_r| \theta)}-1|\ud\sigma(\theta)  =  0
\end{align*}
by the limit \eqref{|e^w-1| to 0}.  The proof is complete.		
\end{proof}

We now are ready to compare the volume of the metric $g$ and the one for its spherical average ${\bar g}$.

\begin{lemma}\label{lem:V_g Omega_r vs V_ bar g}
	For a smooth complete, conformal normal metric $g=e^{2u}|dx|^2$  on $\mr^n$ with  $n\geq 2$ and $\alpha_0<1$,  it holds true that 
	$$\lim_{r\to\infty}\frac{V_g(B_r^g(0))}{V_{\bar g}(B_r^g(0))}=1.$$
\end{lemma}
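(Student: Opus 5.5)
The set $B_r^g(0)$ is a fixed domain that we measure with the two weights $e^{nu}\,dx$ and $e^{n\bar u}\,dx$. Since $e^{nu}=e^{nw}e^{n\bar u}$, the claim is equivalent to
\[
\frac{\int_{B_r^g(0)}(e^{nw}-1)\,\mathrm{d}\mu_{\bar g}}{V_{\bar g}(B_r^g(0))}\longrightarrow 0 .
\]
The plan is to sandwich $B_r^g(0)$ between Euclidean balls whose $\bar g$-volumes are comparable to $V_{\bar g}(B_r^g(0))$. Then we transfer the averaged smallness of $|e^{nw}-1|$ on large Euclidean balls to $B_r^g(0)$.

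\textbf{Step 1 (sandwich).} Let $\rho_1(r)=\min_{\partial B_r^g(0)}|x|$ and $\rho_2(r)=\max_{\partial B_r^g(0)}|x|$. Then $B_{\rho_1}(0)\subset B_r^g(0)\subset B_{\rho_2}(0)$. By \eqref{Vz_r} in Lemma \ref{lem: V_barg and V_g growth}, both $\int_{B_{\rho_i}(0)}e^{nu}\,dx$ are comparable to $r^n$. Using Lemma \ref{lem:e^kw} with $k=n$, the $\bar g$-volumes $V_{\bar g}(B_{\rho_i}(0))=\int_{B_{\rho_i}(0)}e^{n\bar u}\,dx$ are also comparable to $r^n$ for $r$ large. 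Since $\rho_1\to\infty$ by completeness, we get
\[
C^{-1}V_{\bar g}(B_{\rho_2}(0))\le V_{\bar g}(B_{\rho_1}(0))\le V_{\bar g}(B_r^g(0))\le V_{\bar g}(B_{\rho_2}(0)) .
\]

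\textbf{Step 2 (averaged smallness).} Next we show
\[
\frac{\int_{B_\rho(0)}|e^{nw}-1|\,\mathrm{d}\mu_{\bar g}}{V_{\bar g}(B_\rho(0))}\to 0 \qquad\text{as }\rho\to\infty .
\]
Apply L'H\^opital's rule exactly as in Lemma \ref{lem:e^kw}; this is legitimate because $V_{\bar g}(B_\rho(0))\to\infty$ by \eqref{infinte V_bar g}. The problem then reduces to showing $\fint_{\partial B_\rho(0)}|e^{nw}-1|\,\mathrm{d}\sigma\to 0$. For this, use the elementary bound
\[
|e^{nw}-1|\le (e^{nw}-1-nw)+n|w| .
\]
By Lemma \ref{CQY lemma} with $k=n$, the spherical average of $e^{nw}$ tends to $1$, and by Lemma \ref{lem:|w|} the spherical average of $|w|$ tends to $0$. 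This is the same argument that gave \eqref{|e^w-1| to 0}, now with exponent $n$.

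\textbf{Step 3 (conclusion).} Write
\[
\left|\frac{V_g(B_r^g(0))}{V_{\bar g}(B_r^g(0))}-1\right|\le \frac{\int_{B_{\rho_2}(0)}|e^{nw}-1|\,\mathrm{d}\mu_{\bar g}}{V_{\bar g}(B_r^g(0))}\le C\,\frac{\int_{B_{\rho_2}(0)}|e^{nw}-1|\,\mathrm{d}\mu_{\bar g}}{V_{\bar g}(B_{\rho_2}(0))}.
\]
The right-hand side tends to $0$ by Step 2, since $\rho_2\to\infty$.

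\textbf{Main obstacle.} The delicate point is Step 1, specifically the uniform comparability of $\rho_1$ and $\rho_2$ in volume. This is exactly where the strong $A_\infty$ property (Lemma \ref{lem:strong A_infty}) and the hypothesis $\alpha_0<1$ are needed. If \eqref{Vz_r} did not directly give the bounds with constants independent of the chosen point, I would instead fall back on the doubling property to compare $\int_{B_{\rho_2}}e^{nu}$ with $\int_{B_{\rho_1}}e^{nu}$.
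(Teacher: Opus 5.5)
Your proposal is correct and is essentially the paper's argument. It uses the same outer Euclidean ball $B_{\rho_2}(0)$ (the paper's $B_{|e_r|}(0)$), the same L'H\^opital reduction to $\fint_{\partial B_t(0)}|e^{nw}-1|\ud\sigma\to 0$ via Lemmas \ref{CQY lemma} and \ref{lem:|w|}, and the same volume comparability from \eqref{Vz_r}. The only difference is cosmetic: you divide by $V_{\bar g}(B_r^g(0))$ and use the inner ball $B_{\rho_1}(0)$ with Lemma \ref{lem:e^kw}. The paper instead divides by $V_g(B_r^g(0))$ and uses Jensen's inequality $\int_{B_\rho(0)}e^{n\bar u}\ud x\le\int_{B_\rho(0)}e^{nu}\ud x$ together with \eqref{V_g}.
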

\begin{proof}

For any $r>0$, choose $e_r\in \partial B_r^g(0)$ such that $B_r^g(0)\subset B_{|e_r|}(0)$. Since the metric is complete, we have $|e_r|\to\infty$ as $r\to\infty$.
By the same argument as in the proof for the limit \eqref{|e^w-1| to 0}, one also has
\begin{equation*}
	\fint_{\partial B_t(0)}|e^{nw}-1|\ud\sigma\to 0, \quad \mathrm{as}\;\; t\to\infty.
\end{equation*}
With the help of this estimate, together with  the fact that the volume of the metric ${\bar g}$ is infinity (the limit \eqref{infinte V_bar g}) and  L'H\^opital's rule, we obtain that
\begin{equation}\label{|e^nw-1|to0}
	\lim_{t\to\infty}\frac{\int_{B_t(0)}|e^{nw}-1|\ud\mu_{\bar g}}{\int_{B_t(0)}\ud\mu_{\bar g}}=\lim_{t\to\infty}\fint_{\partial B_t(0)}|e^{nw}-1|\ud\sigma=0.
\end{equation}
Then,  apply  the estimate \eqref{|e^nw-1|to0} to conclude that
\begin{align*}
&|V_g(B_r^g(0))-V_{\bar g}(B_r^g(0))|\\
	\leq &\int_{B_{|e_r|}(0)}|e^{nw}-1|\ud\mu_{\bar g}\\
=&\frac{\int_{B_{|e_r|}(0)}|e^{nw}-1|\ud\mu_{\bar g}}{\int_{B_{|e_r|}(0)}\ud\mu_{\bar g}}\int_{B_{|e_r|}(0)}e^{n\bar u}\ud x\\
=&\frac{\int_{B_{|e_r|}(0)}|e^{nw}-1|\ud\mu_{\bar g}}{\int_{B_{|e_r|}(0)}\ud\mu_{\bar g}} \cdot \frac{\int_{B_{|e_r|}(0)}e^{n\bar u}\ud x}{\int_{B_{|e_r|}(0)}e^{n u}\ud x}\cdot \int_{B_{|e_r|}(0)}e^{n u}\ud x\\
=& o(1)V_{g}(B_r^g(0)),
\end{align*}  by Jensen's inequality, $\frac{\int_{B_{|e_r|}(0)}e^{n\bar u}\ud x}{\int_{B_{|e_r|}(0)}e^{n u}\ud x} \leq 1$ and the fact that $ \int_{B_{|e_r|}(0)}e^{n u}\ud x \leq C r^n \leq C V_{g}(B_r^g(0))$ due to the estimate (\ref{V_g}). Then both sides divides by $V_{g}(B_r^g(0))$ to see that 
$$ \lim_{r\to\infty} \frac{V_{\bar g} (B_r^g(0))}{V_g(B_r^g(0))}=1,$$ which is equivalent to our claim.
Thus,  the proof of Lemma \ref{lem:V_g Omega_r vs V_ bar g} is complete.
\end{proof}

Finally, we are ready to prove Theorem \ref{lem:volume ratio and Q-curvature}. 

\vskip .2in

\noindent {\bf Proof of Theorem \ref{lem:volume ratio and Q-curvature}:}
 Recall that our main claim of Theorem \ref{lem:volume ratio and Q-curvature} is the following inequality:
	\begin{equation*}
		\liminf_{r\to\infty}\frac{V_g(B_r^g(0))}{|\mathbb{B}^n|r^n}\geq(1-\alpha_0)^{n-1}.
	\end{equation*}

First of all, Lemma \ref{lem:Cohn-Vossen} is just said that $\alpha_0\leq 1$. 
Since the case $\alpha_0 = 1$ is trivial,  we just need to prove the inequality under the assumption that $\alpha_0<1$.
In order to do this, let us consider the  set
$$\Omega_s:=\{x|x\in\mr^n, l(x)<s\}.$$
By the definition, for any $x\in \Omega_s\setminus \{0\}$, one can easily see that
$$d_g(0,x)\leq \int^{|x|}_0e^{u(t\frac{x}{|x|})}\ud t =l(x)<s$$
which yields that 
\begin{equation}\label{Omega_s subset B_s}
	\Omega_s\subset B_s^g(0).
\end{equation}
Pick a constant number $\delta>0$ and  consider the dilated set
$$A_{\delta, r}:=\{x\in B_r^{\bar g}(0)\mid |l(x)-\bar l(x)|<\delta r\}.$$
Then, a direct consequence of this set is, for a $x\in A_{\delta, r}$, 
$$l(x)\leq \bar l(x)+\delta r<(1+\delta)r.$$
which shows that
\begin{equation}\label{Adelta subet Omega 1+delta r}
	A_{\delta, r}\subset \Omega_{(1+\delta)r}.
\end{equation}
Now it follows from the definition of $A_{\delta, r}$ that
$$\frac{V_{\bar g}(B_r^{\bar g}(0)\setminus A_{\delta, r})}{V_{\bar g}(B_r^{\bar g}(0))}\leq \frac{1}{\delta r}\frac{\int_{B_r^{\bar g}(0)}|l(x)-\bar l(x)|\ud\mu_{\bar g}}{V_{\bar g}(B_r^{\bar g}(0))}.$$
Making use of Lemma  \ref{lem: |l(x)-bar l(x)|}, one immediately concludes:
\begin{equation}\label{V_bar B minus A_delta to 0}
	\lim_{r\to\infty}\frac{V_{\bar g}(B_r^{\bar g}(0)\setminus A_{\delta, r})}{V_{\bar g}(B_r^{\bar g}(0))}=0.
\end{equation}
Next we observe that the two inclusion relations \eqref{Omega_s subset B_s} and \eqref{Adelta subet Omega 1+delta r} imply the following estimate:
\begin{align*}
	&\frac{V_{\bar g}(B_{(1+\delta)r}^g(0))}{|\mathbb{B}^n|((1+\delta)r)^n}\\	\geq &\frac{V_{\bar g}(\Omega_{(1+\delta)r})}{|\mathbb{B}^n|((1+\delta)r)^n}\\
	\geq &\frac{V_{\bar g}(A_{\delta, r})}{|\mathbb{B}^n|((1+\delta)r)^n}\\
	=&\frac{1}{(1+\delta)^n}\left(1-\frac{V_{\bar g}(B_r^{\bar g}(0)\setminus A_{\delta,r})}{V_{\bar g}(B_r^{\bar g}(0))}\right)\cdot \frac{V_{\bar g}(B_r^{\bar g}(0))}{|\mathbb{B}^n|r^n}.
\end{align*}
By taking $r$ goes to $\infty$,  making use of the limit \eqref{V_bar B minus A_delta to 0}, together with Lemmas \ref{lem: Vbarg} and \ref{lem:V_g Omega_r vs V_ bar g}, one has
$$\liminf_{r\to\infty}\frac{V_g(B_r^g(0))}{|\mathbb{B}^n|r^n}\geq (1+\delta)^{-n}(1-\alpha_0)^{n-1}.$$
Since the constant $\delta$ is arbitrary, and left hand side is independent of $\delta$,  let $\delta $ go to $0$ to obtain the inequality as we expected. Thus the proof of Theorem \ref{lem:volume ratio and Q-curvature} is complete. \qed

\section{Proof of main theorems and further discussions}\label{sec:proof of main theorem}

The main purpose of this section is to prove our Theorem \ref{thm:main theorem}. Basically our argument is based on our previous various estimates and together with several existing results, including Brendle's inequality \eqref{brendle's inequality}, Chang-Qing-Yang's identity \eqref{Finn for Q},  Lemma \ref{lem:volume ratio and Q-curvature} and Theorem \ref{thm:positive sectional for normal metric} together. Let us present it in detail as follows.

\vskip .2in

{\bf Proof of Theorem \ref{thm:main theorem}:}

\vskip .1in

Under the assumption that the top $Q$-curvature is non-negative, by our Theorem \ref{thm:positive sectional for normal metric}, we know that the sectional curvature of $g$ is non-negative. Consequently, the  Ricci curvature $g$ is also non-negative.  Thus, by Bishop-Gromov volume comparison theorem and Theorem \ref{lem:volume ratio and Q-curvature}, we obtain that 
$$\mathrm{AVR}(g)\geq (1-\alpha_0)^{n-1}.$$
For any bounded domain $\Omega$ with smooth boundary $\partial \Omega$,  making use of this estimate and  Brendle's isoperimetric inequality \eqref{brendle's inequality} (See Corollary 1.3 in \cite{Brendle}), we obtain
$$|\partial \Omega|_g^{\frac{n}{n-1}}\geq n^{\frac{n}{n-1}}|\mathbb{B}^n|^{\frac{1}{n-1}}(1-\alpha_0)|\Omega|_g.$$
From  this estimate and the definition of $I_g$ (See the definition \eqref{I_g def}), it follows that 
$$I_g\geq 1-\alpha_0.$$
On the other hand,  the generalized Chang-Qing-Yang's identity \eqref{Finn for Q} gives the upper bound
$$I_g\leq 1-\alpha_0.$$
Therefore, we conclude that 
$$I_g=1-\alpha_0.$$
This completes the proof. \qed

\vspace{2em}

Combining our Theorem \ref{lem:volume ratio and Q-curvature} with Theorem 1.2 in \cite{BK}, one has the following sharp Sobolev inequality on $(\mr^n, g)$ with $g$ as in Theorem \ref{thm:main theorem}. For $1<p<n$, we denote its $(1, p)$ Sobolev space by $W^{1, p}(M)$ with  $$\dot{W}^{1,p}(M,g):=\{U\in L^{\frac{np}{n-p}}(M,g):|\nabla_g U|\in L^p(M,g)\}.$$
\begin{corollary}
	Let $g=e^{2u}|dx|^2$ be  a smooth complete normal metric  on $M=\mr^n$. Assume the dimension of $M$  is at least two and the top $Q$-curvature  $Q_g^{(n)}$ is non-negative.   If $p\in (1,n)$, then for any $U\in \dot{W}^{1,p}(\mr^n,g)$, one has
	$$(1-\alpha_0)^{\frac{n-1}{n}}\left(\int_{\mr^n}|U|^{\frac{np}{n-p}}\ud\mu_g\right)^{\frac{n-p}{np}}\leq AT(n,p)\left(\int_{\mr^n}|\nabla_g U|^p\ud\mu_g\right)^{\frac{1}{p}}$$
	where $AT(n,p)$ is the sharp Sobolev constant for Euclidean metric.
\end{corollary}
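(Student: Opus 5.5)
The plan is to reduce the statement to the sharp Sobolev inequality of Balogh--Krist\'aly (Theorem 1.2 in \cite{BK}). That theorem says: on an $n$-dimensional $\mathrm{CD}(0,n)$ space with positive asymptotic volume ratio, for $p\in(1,n)$ and every $U\in \dot{W}^{1,p}$,
$$\mathrm{AVR}(g)^{\frac1n}\Bigl(\int |U|^{\frac{np}{n-p}}\ud\mu_g\Bigr)^{\frac{n-p}{np}}\leq AT(n,p)\Bigl(\int|\nabla_g U|^p\ud\mu_g\Bigr)^{\frac1p}.$$
So it suffices to check that $(\mr^n,g)$ fits this setting and that $\mathrm{AVR}(g)^{\frac1n}\geq (1-\alpha_0)^{\frac{n-1}{n}}$. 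The geometric input is exactly what was used in the proof of Theorem \ref{thm:main theorem}; only the final step changes, with Brendle's isoperimetric inequality replaced by the Sobolev inequality of \cite{BK}.

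The steps run as follows.

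First, since $Q_g^{(n)}\geq 0$ and $g$ is complete and normal, Theorem \ref{thm:positive sectional for normal metric} gives $sec_g\geq 0$, hence $Ric_g\geq 0$. Then $(\mr^n,g,\mu_g)$ is a smooth complete Riemannian manifold with nonnegative Ricci curvature, i.e.\ a $\mathrm{CD}(0,n)$ metric measure space.

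Second, by Bishop--Gromov the quotient $V_g(B^g_r(x_0))/(|\mathbb{B}^n|r^n)$ is nonincreasing, so $\mathrm{AVR}(g)$ exists and does not depend on the base point. We may therefore take the base point to be the origin. Theorem \ref{lem:volume ratio and Q-curvature} then yields
$$\mathrm{AVR}(g)=\lim_{r\to\infty}\frac{V_g(B_r^g(0))}{|\mathbb{B}^n|r^n}\geq (1-\alpha_0)^{n-1}.$$

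Third, we split into cases according to Lemma \ref{lem:Cohn-Vossen}, which gives $\alpha_0\leq 1$.
\begin{itemize}
\item If $\alpha_0=1$, the left-hand side of the claimed inequality vanishes and there is nothing to prove.
\item If $\alpha_0<1$, then $\mathrm{AVR}(g)>0$, so the hypotheses of \cite{BK} are met. Using $\mathrm{AVR}(g)^{\frac1n}\geq(1-\alpha_0)^{\frac{n-1}{n}}$ in their inequality gives the claim.
\end{itemize}

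The only point needing care is bookkeeping rather than analysis. One must confirm that the normalization in Theorem 1.2 of \cite{BK} is exactly $\mathrm{AVR}^{1/n}$ multiplying the $L^{p^*}$ norm, with the Euclidean Aubin--Talenti constant $AT(n,p)$. One must also confirm that their notion of $\mathrm{AVR}$ matches ours: the same volume measure $\mu_g$ and geodesic balls of $g$. If their statement is phrased with the isoperimetric constant instead, I would derive the Sobolev inequality directly from the isoperimetric inequality $|\partial\Omega|_g^{\frac{n}{n-1}}\geq n^{\frac{n}{n-1}}|\mathbb{B}^n|^{\frac1{n-1}}\mathrm{AVR}(g)^{\frac1{n-1}}|\Omega|_g$ by Schwarz symmetrization onto Euclidean space. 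This rescales the Euclidean sharp Sobolev inequality, and the exponent bookkeeping produces precisely the factor $\mathrm{AVR}(g)^{1/n}$.
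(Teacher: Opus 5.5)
Your proposal is correct and follows the paper's own route. The paper's one-line justification is the same chain you give: $Q_g^{(n)}\geq 0$ gives $\sec_g\geq 0$, hence $\mathrm{Ric}_g\geq 0$ and a $\mathrm{CD}(0,n)$ space; Bishop--Gromov together with Theorem \ref{lem:volume ratio and Q-curvature} gives $\mathrm{AVR}(g)\geq(1-\alpha_0)^{n-1}$; and Theorem 1.2 of \cite{BK} then supplies the factor $\mathrm{AVR}(g)^{1/n}\geq(1-\alpha_0)^{\frac{n-1}{n}}$. Your separate treatment of $\alpha_0=1$ is a detail the paper leaves implicit: there $\mathrm{AVR}(g)$ may vanish, so \cite{BK} does not apply, but the left-hand side is zero.
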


\vspace{2em}

We now turn to the question when above inequality can be equality in Theorem \ref{lem:volume ratio and Q-curvature}.
\begin{prop}
		For a smooth complete  normal metric $g=e^{2u}|dx|^2$ on $\mr^n$, if its dimension  $n\geq 2$ and the top $Q$-curvature $Q^{(n)}_g$ is non-negative, then the volume of $g$ grows like the volume of ball in Euclidean space with precise ratio: 
		\begin{equation*}
			\lim_{r\to\infty}\frac{V_g(B_r^g(0))}{|\mathbb{B}^n|r^n}=(1-\alpha_0)^{n-1}.
		\end{equation*}
\end{prop}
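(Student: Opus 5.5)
The plan is to squeeze the volume ratio between the lower bound of Theorem \ref{lem:volume ratio and Q-curvature} and an upper bound extracted from Brendle's isoperimetric inequality \eqref{brendle's inequality}, tested on large Euclidean balls through the identity \eqref{Finn for Q}.

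First, since $Q_g^{(n)}\geq 0$, Theorem \ref{thm:positive sectional for normal metric} gives $\sec_g\geq 0$, so $Ric_g\geq 0$. By the Bishop--Gromov volume comparison theorem, the quotient $V_g(B_r^g(0))/(|\mathbb{B}^n|r^n)$ is monotone non-increasing in $r$. Hence its limit exists and equals $\mathrm{AVR}(g)$, taken with base point $q=0$. By Theorem \ref{lem:volume ratio and Q-curvature}, this limit is at least $(1-\alpha_0)^{n-1}$. It therefore remains only to prove the reverse inequality $\mathrm{AVR}(g)\leq (1-\alpha_0)^{n-1}$.

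For the upper bound, apply Brendle's inequality \eqref{brendle's inequality}, valid because $Ric_g\geq0$, to the compact smooth domains $\Omega=B_r(0)$, the Euclidean balls. This gives
\[
\mathrm{AVR}(g)^{\frac{1}{n-1}}\leq \frac{|\partial B_r(0)|_g^{\frac{n}{n-1}}}{n^{\frac{n}{n-1}}|\mathbb{B}^n|^{\frac{1}{n-1}}|B_r(0)|_g}=I_g(B_r(0))
\]
for every $r>0$. Letting $r\to\infty$ and invoking the generalized Chang--Qing--Yang identity \eqref{Finn for Q}, the right-hand side tends to $1-\alpha_0$. Thus $\mathrm{AVR}(g)^{1/(n-1)}\leq 1-\alpha_0$, and raising to the power $n-1$ gives $\mathrm{AVR}(g)\leq(1-\alpha_0)^{n-1}$; this is legitimate since $1-\alpha_0\geq0$ by Lemma \ref{lem:Cohn-Vossen}. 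Combined with the lower bound, this yields the stated limit.

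The main point to check is the borderline case $\alpha_0=1$. There the lower bound is trivial, and the upper bound forces $\mathrm{AVR}(g)=0$. This works provided \eqref{Finn for Q} is available without assuming $\alpha_0<1$, which is how it is stated in the introduction. No strong $A_\infty$ machinery is needed for the upper bound, so I expect no real obstacle beyond confirming that \eqref{Finn for Q} holds for all complete normal metrics as quoted.
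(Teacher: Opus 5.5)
Your proposal is correct and follows the paper's proof. Non-negative sectional curvature together with Bishop--Gromov identifies the limit with $\mathrm{AVR}(g)$, Theorem \ref{lem:volume ratio and Q-curvature} gives the lower bound, and Brendle's inequality combined with $I_g\leq 1-\alpha_0$ gives the upper bound; the only difference is that the paper quotes Theorem \ref{thm:main theorem} for $I_g=1-\alpha_0$, while you test Brendle's inequality directly on Euclidean balls and invoke \eqref{Finn for Q}, which is exactly the half of Theorem \ref{thm:main theorem} that is needed.
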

\begin{proof}
Again, making use of Theorem \ref{thm:positive sectional for normal metric},  the Ricci curvature of $g$ is non-negative. Then, apply  Bishop-Gromov volume comparison theorem to have 
\begin{equation}\label{BG volume limit}
	\lim_{r\to\infty}\frac{V_g(B_r^g(0))}{|\mathbb{B}^n|r^n}=\mathrm{AVR}(g).
\end{equation}
Then our Theorem \ref{thm:main theorem} and Brendle's isoperimetric inequality \eqref{brendle's inequality} can be applied to obtain
$$(1-\alpha_0)^{n-1}\geq 	\mathrm{AVR}(g).$$
This estimate, together with Lemma \ref{lem:volume ratio and Q-curvature} and \eqref{BG volume limit}, shows that our claim holds true.  Thus we complete the proof.
\end{proof}
\begin{remark}
	In fact, such a result  can also be obtained by combining Theorem \ref{thm:positive sectional for normal metric} and Theorem 1.4 in \cite{Li conformal} with Theorem 1.2 in \cite{Ma}.
\end{remark}

The following lemma is simple yet important, as it provides another upper bound for  $I_g$.
\begin{lemma}\label{lem:I_g leq 1}
	 Let $g=e^{2u}|dx|^2$  on $\mr^n$ be a smooth conformal metric  with  dimension $n\geq 2.$ Then, there holds
	$$I_g\leq 1.$$
\end{lemma}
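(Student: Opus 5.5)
The plan is to exploit the fact that any smooth Riemannian metric is almost Euclidean at very small scales. We test the isoperimetric quotient on tiny balls around a point, and the quotient will tend to $1$. No curvature assumption or normality is needed; only smoothness of $u$ at a single point, say the origin.

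Fix $x_0=0$ and take $\Omega=B_\epsilon(0)$, the Euclidean ball, with $\epsilon\to0$. Since $u$ is smooth, we can write $u(x)=u(0)+O(\epsilon)$ uniformly on $\overline{B_\epsilon(0)}$. This gives
$$|\partial B_\epsilon(0)|_g=\int_{\partial B_\epsilon(0)}e^{(n-1)u}\ud\sigma=e^{(n-1)u(0)}(1+O(\epsilon))\,|\mathbb{S}^{n-1}|\epsilon^{n-1},$$
and similarly
$$|B_\epsilon(0)|_g=\int_{B_\epsilon(0)}e^{nu}\ud x=e^{nu(0)}(1+O(\epsilon))\,|\mathbb{B}^n|\epsilon^n.$$
Substituting into $I_g(\Omega)$, the factor $e^{nu(0)}$ cancels between numerator and denominator. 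We are left with
$$I_g(B_\epsilon(0))=(1+O(\epsilon))\frac{(|\mathbb{S}^{n-1}|\epsilon^{n-1})^{\frac{n}{n-1}}}{n^{\frac{n}{n-1}}|\mathbb{B}^n|^{\frac{1}{n-1}}|\mathbb{B}^n|\epsilon^n}.$$
Using $|\mathbb{S}^{n-1}|=n|\mathbb{B}^n|$, the main factor equals exactly $1$. This is just the equality case of \eqref{sharp iso}.

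Therefore $I_g\le I_g(B_\epsilon(0))=1+O(\epsilon)$ for every small $\epsilon$. Letting $\epsilon\to0$ gives $I_g\le1$, since $I_g$ is defined as an infimum.

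There is essentially no obstacle here. The only care needed is that the $O(\epsilon)$ bounds are uniform, which follows from continuity of $u$ on the compact set $\overline{B_1(0)}$. An equivalent viewpoint is that the conformal metric is $C^0$-close to the constant multiple $e^{2u(0)}|dx|^2$ near $0$, and $I_g(\Omega)$ is invariant under constant scaling of the metric.
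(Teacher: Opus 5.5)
Your proof is correct and follows the paper's own argument: the paper also tests $I_g(\Omega)$ on Euclidean balls $B_r(0)$ shrinking to a point and uses smoothness of $u$ to show that the quotient tends to $1$. One small precision: continuity of $u$ alone only gives the error factor as $1+o(1)$, and you need the bounded gradient from smoothness to get $1+O(\epsilon)$, but either version suffices for the conclusion $I_g\le 1$.
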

\begin{proof}
	Let  $B_r(0)$ be a Euclidean ball with $r > 0$. Since $u$ is smooth, it is not hard to check that 
	\[\frac{(\int_{\partial B_r(0)} e^{(n-1)u}\ud\sigma)^{\frac{n}{n-1}}}{n^{\frac{n}{n-1}}|\mathbb{B}^n|^{\frac{1}{n-1}}\int_{B_r(0)}e^{nu}\ud x} = \frac{(\fint_{\partial B_r(0)} e^{(n-1)u}\ud\sigma)^{\frac{n}{n-1}}}{\fint_{B_r(0)} e^{nu} \ud x}  \to 1, \quad \mathrm{as}\; r\to 0.\]
	Then our claim follows from the definition of  $I_g$ (See definition \eqref{I_g def}).  Thus we complete the proof.
\end{proof}

\vspace{2em}

{\bf Proof of Theorem \ref{thm:Q_g leq0}:}
From Theorem \ref{thm:positive sectional for normal metric}, we have 
$$\sec_g\leq 0.$$
If the Cartan-Hadamard conjecture holds, we can apply it to obtain
$$I_g\geq 1.$$
Combine  this with Lemma \ref{lem:I_g leq 1} to complete the proof of Theorem \ref{thm:Q_g leq0}. \qed

\vspace{3em}

As  mentioned in the introduction, Huber \cite{Huber 54} extended Fiala's sharp inequality \eqref{Fiala's inf} to obtain  the sharp  inequality \eqref{Fiala-Huber inequality}.  A natural question is if such an inequality can be generalized to higher dimensions. We believe this is likely the case and leave it as a conjecture.

\begin{conjecture}
	Let $g=e^{2u}|dx|^2$ be a smooth complete normal metric  on $\mr^n$ with finite total $Q$-curvature  and dimension $n$  at least three. Then, the isoperimetric ratio $I_g$ should be given as
	$$I_g= 1-\frac{2}{(n-1)!\,|\mathbb{S}^n|}\int_{\mathbb{R}^n} \bigl(Q^{(n)}_g\bigr)^+ \ud\mu_g.$$
\end{conjecture}
\begin{remark}
In the case where $Q^{(n)}_g$ is non-negative or non-positive, and assuming the Cartan–Hadamard conjecture, the above conjecture follows from Theorems \ref{thm:main theorem} and \ref{thm:Q_g leq0}.
\end{remark}

\end{document}